%% file: early_stopped_aggregation.tex
\documentclass[reqno,11pt]{imsart}
\input{_preamble.tex}

\newcommand{\do@not@check@textbox@alter}{}
\usepackage[margin=1in]{geometry}
\begin{document}

\begin{frontmatter}
\title{Early-stopped aggregation: Adaptive inference with computational efficiency}
\runtitle{Early-stopped aggregation}

\begin{aug}
\author[A]{\fnms{Ilsang} \snm{Ohn}}\ead[label=e1]{ilsang.ohn@inha.ac.kr}
\author[B]{\fnms{Shitao} \snm{Fan}} 
\author[A]{\fnms{Jungbin} \snm{Jun}}
\and
\author[B]{\fnms{Lizhen} \snm{Lin}}
\address[A]{Department of Statistics, Inha University \\ 
 }
\address[B]{Department of Mathematics, University of Maryland }%
\end{aug}
\runauthor{Ohn, Fan, Jun, and Lin}

\begin{abstract}
When considering a model selection or, more generally, an aggregation approach for adaptive  statistical inference, it is often necessary to compute estimators  over a wide range of model complexities including unnecessarily large models even when the true data-generating process is relatively simple, due to the lack of prior knowledge. This requirement can lead to substantial computational inefficiency. In this work, we propose a novel framework for efficient model aggregation called the \emph{early-stopped aggregation (ESA)}: instead of computing and aggregating estimators for all candidate models, we compute only a small number of simpler ones using an early-stopping criterion and aggregate only these for final inference. Our framework is versatile and applies to both Bayesian model selection, in particular,  within the variational Bayes framework, and frequentist estimation, including a general penalized estimation setting. 

We investigate adaptive optimal property of the ESA approach across three learning paradigms.  We first show that ESA achieves optimal adaptive contraction rates in the variational Bayes setting under mild conditions. We extend this result to variational empirical Bayes, where prior hyperparameters are chosen in a data-dependent manner. In addition, we apply the ESA approach to frequentist aggregation including both penalization-based and sample-splitting implementations, and establish corresponding theory. As we demonstrate, there is a clear unification between early-stopped Bayes and frequentist penalized aggregation, with a common ``energy” functional comprising a data-fitting term and a complexity-control term that drives both procedures. We further present several applications and numerical studies that highlight the efficiency and strong performance of the proposed approach.
\end{abstract}

\begin{keyword}[class=MSC2020]
\kwd[Primary ]{62C10}
\kwd[; secondary ]{62G20}
\end{keyword}

\begin{keyword}
\kwd{Adaptive inference}
\kwd{Early stopping}
\kwd{Empirical Bayes}
\kwd{Exponentially weighted aggregate}
\kwd{Model aggregation}
\kwd{Penalized estimation}
\kwd{Posterior contraction rates}
\kwd{Variational Bayes}
\end{keyword}
\end{frontmatter}

\section{Introduction}

\subsection{Motivation and background}

Modern statistical learning problems are often dealt with an ordered collection of candidate models, ranging from simple, low-dimensional representations to highly complex and expressive ones. An ideal model should be well aligned with the underlying data-generating process, since increasing model complexity and expressiveness typically reduces approximation error but at the cost of inflated estimation error. Selecting an appropriate model within such a model ``ladder'' is therefore a central challenge, as the complexity of the true data-generating distribution is rarely known in practice. Consequently, model selection procedures must  rely on fully data-dependent approaches in order to achieve \textit{adaptive inference} to the unknown true data-generating distribution.

A vast literature has developed around model selection, addressing this challenge from a variety of perspectives and under different modeling assumptions. Various model selection techniques including information criteria and sample-splitting approaches like cross-validation, has been successfully applied in practice, and their adaptive optimality has been extensively studied. Given the richness of this literature, we do not attempt a comprehensive review here; see, for example, textbooks such as \citet{koltchinskii2011oracle,oneto2020model,zhang2023mathematical}.

Aggregation methods, which combine estimators across multiple candidate models, can be viewed as a natural generalization of model selection. Indeed, model selection corresponds to an extreme form of aggregation in which all weight is assigned to a single model. Theoretical properties, in particular oracle inequalities for statistical risks, of aggregation procedures have been well understood for a broad class of statistical problems; see, for example, \citet{juditsky2000functional,juditsky2008learning,yang2000combining,yang2004aggregating, lecue2007optimal, rigollet2007linear, goldenshluger2009universal}. A common feature of these results is that the candidate estimators are assumed to be deterministic, or random but constructed from a sample independent of the one used for aggregation. Several subsequent works have relaxed this restriction. For instance, \citet{dalalyan2012sharp} studied the exponentially weighted aggregate (EWA) of affine estimators for nonparametric regression, while \citet{alquier2011pac} and \citet{rigollet2011exponential} investigated the EWA of least-squares estimators with different sparsity patterns for nonparametric regression and high-dimensional linear regression, respectively. Despite these advances, classical aggregation procedures typically require that all candidate estimators along the model collection be computed prior to aggregation. This exhaustive computation can be  expensive in large-scale settings.

On the Bayesian side, a hierarchical Bayes approach provides a natural aggregation mechanism by placing a prior distribution over a collection of candidate models. The resulting hierarchical posterior can be interpreted as an aggregate of model-specific posterior distributions, with weights that are exponentially proportional to the corresponding marginal likelihoods. Under suitable prior design, hierarchical Bayes procedures have been shown to achieve adaptive optimal contraction rates in a wide range of applications \citep{belitser2003adaptive,lember2007universal,ghosal2008nonparametric,yang2017bayesian,gao2020general,han2021oracle}. This theory can be extended to generalized Bayes approaches, which replaces the log-likelihood by a general loss function to conduct model-free and robust inference. Recent studies have established  adaptive optimal  contraction rates for such generalized Bayesian procedures under appropriate conditions \citep{atchade2017contraction,atchade2018approach,syring2023gibbs}.

Despite these theoretical advantages, exact posterior computation via Markov chain Monte Carlo (MCMC) becomes infeasible in high-dimensional and/or massive data sets. Variational Bayes is a widely used computational alternative, recasting posterior computation as an optimization problem over a tractable variational family.  Recent studies have shown that variational Bayes can attain desirable statistical guarantees including optimal contraction rates \citep{zhang2020convergence,pati2018statistical,yang2020alpha, cherief2018consistency,ray2021variational, ray2020spike,ning2021spike}. However, constructing a variational approximation to a hierarchical posterior over multiple models or aggregating multiple variational posteriors in a theoretically grounded way, remains challenging.

Recently, \citet{ohn2024adaptive} addressed this issue by proposing a novel \textit{adaptive variational Bayes} framework, in which variational posteriors are computed for all candidate models and aggregated with weights according to their values of the variational free energy. This can be seen as the aggregation approach  built on the variational Bayes estimation. It was shown that this aggregated variational posterior can conduct adaptive optimal inference under fairly general conditions. While adaptive variational Bayes is statistically optimal and computationally more tractable than hierarchical Bayes, it still requires running variational Bayes for every model in the collection, including those that are overly large and contribute little to adaptivity. 

These observations suggest a common limitation of both frequentist and Bayesian aggregation methods: in order to attain statistical adaptivity, they require evaluating all candidate models in the ladder, including those far more complex than necessary. Motivated by this computational bottleneck, we propose a novel model aggregation framework with an early-stopping mechanism, called \textit{early-stopped aggregation (ESA)}. Rather than aggregating ``all'' estimators computed over  candidate models, our ESA approach exploits the ordered structure of the model ladder and may stop at an unnecessarily complex model by observing the local behavior of an aggregation criterion.

An alternative and widely used approach to controlling model complexity is penalization, which augments the empirical risk with an explicit penalty term.
Contrary to aggregation methods, penalization-based approaches are based on a single-stage optimization problem of this penalized empirical risk. This paradigm forms the basis of a broad class of methods, including regularized nonparametric regression and classification methods from classical \citep{wahba1990spline,mammen1997locally, horowitz2007rate,steinwart2007fast} to modern \citep{guntuboyina2020adaptive, sadhanala2024multivariate,ki2024mars} developments, and sparse high-dimensional regression, for which we refer to a textbook \citet{buhlmann2011statistics} for an overview of the relevant methods and papers \citet{bertsimas2020sparse,wang2020bridge} for recent progress.

However, in many statistical applications such as deep learning and clustering methods, controlling model complexity through a fixed penalty term is challenging, particularly when model selection involves architectural choices such as  depth and width of a neural network or the number of clusters. In such settings, the resulting optimization problem is often difficult to solve jointly over both parameters and the complexity of the model. The proposed ESA framework offers a practical and efficient alternative in these situations.  An additional practical advantage is that it can be implemented as a ``wrapper'' of existing learning algorithms, as the ESA procedure can be integrated with a wide range of existing methods without reformulating the underlying optimization process adapted to penalization. 

\subsection{Our contributions}

This paper develops a unified theoretical framework for ESA along
ordered sequence of candidate models and establishes its statistical and computational properties across Bayesian and frequentist paradigms. Our contributions are summarized  below.

\paragraph{A general early-stopped aggregation framework} We formalize early stopping as a ``local'' model selection method, which detours exhaustive ``global'' search over all candidate models. Within the proposed ESA framework, we track local changes of the criterion and terminates once additional increases in model complexity no longer yield improvement. After stopping the model search, we aggregate the estimators computed along the search path, such as variational posteriors or empirical risk minimizers, with exponential weights. We show that the resulting procedure nearly optimizes the bias-variance trade-off without any knowledge of the true data-generating process, i.e., achieves \textit{adaptive} optimality, in a manner that is independent of a specific inferential paradigm. 

\paragraph{Theoretical results for three learning methods}  
Although the ESA provides a general approach to  adaptive inference, we  primarily illustrate the ESA framework and its theoretical results with variational Bayes, in order to present the ESA principle in a concrete and transparent form. Later, we show that the same theoretical analysis can be extended to empirical Bayes and frequentist aggregation.

\begin{enumerate}
    \item \textit{Variational Bayes.} We develop the ESA theory within the generalized variational Bayes framework. We provide theoretical guarantees showing that the proposed ESA procedure retains the adaptive optimality  of hierarchical Bayes and adaptive variational Bayes \citep{ohn2024adaptive}. First, we establish a \textit{not-too-early-stopped} property, ensuring that the stopping index is larger than a ``near-optimal'' model index with high probability. Then we show that the ESA variational posterior achieves near-optimal contraction rates of the excess risk, which is adaptive to the true data-generating distribution. Third, we show that the early-stopped procedure terminates \textit{not-too-late}, theoretically supporting its computational efficiency.

    \item \textit{Variational empirical Bayes.} We extend the ESA approach for variational Bayes under ``fixed'' prior distributions to empirical Bayes settings,  where the hyperparameters are optimized in a data-dependent manner. This extension requires new deviation arguments that control the additional variability induced by hyperparameter optimization, which we achieve through a Rényi-type entropy condition on the family of candidate priors. This theoretical technique, to the best of our knowledge, is new in the related literature. Building on this argument, we show that the not-too-early-stopped and oracle contraction properties are preserved under empirical Bayes selection.
    
    \item \textit{Frequentist estimation.} Beyond the Bayesian setting, we show that a closely related early stopping phenomenon arises in frequentist exponential aggregation.  We identify a technical similarity in Bayesian and frequentist analysis, which reveals that the Kullback–Leibler (KL) divergence to the prior in variational Bayes plays a role analogous to a complexity penalty in frequentist aggregation. Building on this insight, we develop an early stopping method for frequentist exponential aggregation and establish near-oracle guarantees. Moreover, to address the practical difficulty of determining the complexity penalties in non-asymptotic settings, we further introduce a sample-splitting implementation that replaces penalty tuning with validation-based early stopping.
\end{enumerate}

\paragraph{Computational efficiency}
In addition to the theoretical results, the ESA method offers substantial practical benefits. By eliminating computation for overly complex models, the procedure becomes scalable to large model collections and high-dimensional applications. We illustrate the method in several examples, including large-scale image classification, clustering, high-dimensional linear regression and prediction for tabular data. Across these tasks, we demonstrate empirically that the proposed approach achieves strong performance with significantly reduced computational cost.

\subsection{Organization}
The rest of the paper is organized as follows. In \cref{sec:preliminaries}, we introduce the notation and statistical setup. We then provide a concise overview of relevant frequentist and Bayesian methods. In \cref{sec:method}, we describe the proposed ESA approach within variational Bayes framework and investigate the theoretical properties of ESA, including not-too-early and not-too-late stopping properties and oracle contraction rates. In \cref{sec:eb}, we extend the Bayesian ESA theory in \cref{sec:method} to empirical Bayes variational posteriors, where prior hyperparameters are selected in a data-dependent manner. In \cref{sec:erm}, we present a frequentist ESA method based on penalized empirical risk minimization, and derive the corresponding oracle inequality. Numerical experiments are presented in \cref{sec:experiment}. We conclude our paper in \cref{sec:conclusion} with a discussion of future research directions.

\section{Preliminaries}
\label{sec:preliminaries}

\subsection{Notation}
Let  $\R_{\ge0}:=\{z\in\R:z\ge0\}$. For a natural number $m\in\bN$, we let $[m]:=\{1,2,\dots, m\}$. For $d\in\bN\cup\{\infty\}$, $p\ge1$, and a $d$-dimensional vector $x:=(x_1,\dots, x_d)^\top\in\R^d$, we let $\|x\|_p:=\del[0]{\sum_{j=1}^d|x_j|^p}^{1/p}$ and $\|x\|_\infty:=\max_{j\in[d]}|x_j|$ 
Let  $I_d$ be the $d$-dimensional identity matrix.
Let $\Delta_d:=\cbr[0]{\omega\in[0,1]^d:\|\omega\|_1=1}$ denote the $d-1$-dimensional simplex. For two real numbers $a,b\in\R$, we write $a\vee b:=\max\{a,b\}$ and $a\wedge b:=\min\{a,b\}$.  For an arbitrary set $B$, we denote by $B^\complement$ its complement and by $|B|$ its cardinality.  For two positive sequences $(a_n)_{n\in \mathbb{N}}$ and $(b_n)_{n\in \mathbb{N}}$, we write $a_n\lesssim b_n$ or  $b_n\gtrsim a_n$, if there exists a positive constant $C>0$ such that $a_n\le Cb_n$ for any $n\in \mathbb{N}$. Moreover, we write $a_n\asymp b_n$ if both $a_n\lesssim b_n$  and  $a_n\gtrsim b_n$ hold. Let  $\ind(\cdot)$ denote the indicator function. 
For a measurable space $\cX$, we denote by $\cP(\cX)$ the set of all probability measures supported on $\cX$. For a probability measure $\P\in\cP(\cX)$ and a function $g$ on $\cX$, we write $\P g:=\int_{\cX} g\d\P$, i,e., $\P g$ denotes the expectation of $g$ with respect to the measure $\P$. For two probability distributions $\P_1,\P_2\in\cP(\cX)$, we denote by $\kl(\P_1, \P_2)$ the Kullback-Leibler (KL) divergence from $\P_2$ to $\P_1$, which is defined by $\kl(\P_1,\P_2):=\int \log(\frac{\d\P_1}{\d\P_2})\d\P_1$ if $\P_1\ll\P_2$ and $\kl(\P_1,\P_2):=\infty$ otherwise. For $\rho\in(0,1)\cup(1,\infty)$, the $\rho$-R\'enyi divergence from $\P_2$ to $\P_1$ is defined as $\cD_\rho(\P_1,\P_2):=\frac{1}{\rho-1}\log\del[0]{\int \del[0]{\frac{\d\P_1}{\d\P_2}}^{\rho-1}\d\P_1}$ if $\P_1\ll\P_2$ and $\cD_\rho(\P_1,\P_2):=\infty$ otherwise. For a set $\Theta$ equipped with a metric $\fh$, we say that the set  $\{\thetab_1,\dots, \thetab_N\}$ is a $\epsilon$-cover of  the set $\Theta$, if for any $\theta\in \Theta $, there exists $j\in[N]$ such that $\fh(\thetab_j,\theta)\le \epsilon$. Let $\cN(\epsilon, \Theta,\fh)$ denote the minimal cardinality of a $\epsilon$-cover of $\Theta$. 

\subsection{Setup}

Suppose that we observe a sample $X^{(n)}\in \cX_n$ of size $n$ from the true data-generating distribution $\sP_{\star}^{(n)}\in \cP(\cX_n)$, where $\cX_n$ denotes a measurable sample space.  We model the sample by a parameterized distribution $\Ptheta\in \cP(\cX_n)$ with parameter $\theta$.  The parameter $\theta$ can be also infinite-dimensional, which embraces semi- and non-parametric models.

For each distribution $\P^{(n)}\in \cP(\cX_n)$ and a given sample $X^{(n)}$, we introduce a \textit{loss} function $\ell_n:\cP(\cX_n)\times \cX_n\mapsto \R_{\ge0} $ to measure the ``discrepancy'' between the distribution $\P^{(n)}$ and the sample $X^{(n)}$. For example, for a regression setup, where the sample $X^{(n)}=\{(X_i, Y_i)\}_{i\in[n]}$ consists of $n$-pairs of input $X_i$ and output $Y_i$, we may use the square loss function $\ell_n(\P^{(n)}, X^{(n)})=\sum_{i=1}^n(Y_i- \P^{(n)}(Y_i|X_i))^2$, where $\P^{(n)}(Y_i|X_i)$ denotes the conditional expectation of $Y_i$ given $X_i$ under the law $\P^{(n)}$. For notational simplicity, we hide the dependence of the sample as $\ell_n(\P^{(n)}):=\ell(\P^{(n)}, X^{(n)})$. Moreover, we simply write $\ell_n(\theta)=\ell_n(\Ptheta)$  for a parameterized distribution $\Ptheta$, and $\ell_n^\star:=\ell_n(\Pstar)$ for the true distribution $\Pstar$.  We define the \textit{excess risk} of a distribution $\P^{(n)}\in \cP(\cX_n)$ as
      \begin{align*}
       \cL_n(\P^{(n)}):=   \cL_n(\P^{(n)}, \Pstar):= \Pstar\sbr{\ell_n(\P^{(n)})-\ell_n(\Pstar)}.
    \end{align*}
For simplicity, we write $\cL_n(\theta):=\cL_n(\theta;\Pstar):= \cL_n(\Ptheta, \Pstar)$. Our aim is to show that the proposed  methods,  such as variational Bayes methodology,  give a large mass on the parameters with small excess risk. Moreover,  we define the “excess’’ loss by $\lossdiff(\theta):=\ell_n(\theta)-\ell_n^\star$ for a parameter $\theta$, a quantity that will recur frequently in our theoretical analysis.

For conducting adaptive inference, the bias-variance trade-off should be optimized. This is usually done by selecting a suitable one among several candidate models. In this paper, we also follow such a route as we consider the collection of models, which can be ordered by their complexity. To be more precise, we define the nestedness of models.

\begin{definition}
For two models $\Theta_1$ and $\Theta_2$, the model $\Theta_1$ is \textit{nested} within the model $\Theta_{2}$ and write $\Theta_1\hookrightarrow \Theta_2$ if there exists an embedding $\sT:\Theta_1\mapsto \Theta_2$ satisfying $\sP^{(n)}_\theta=\sP^{(n)}_{\sT(\theta)}$ for any $\theta\in \Theta_{1}$.
\end{definition}

In the rest of the paper, we consider the following ``ladder'' of nested models
 \begin{align*}
        \Theta_{n,1}\hookrightarrow \Theta_{n,2}\hookrightarrow \cdots \hookrightarrow \Theta_{n,M_n}.
    \end{align*}
Here, the models $\Theta_{n,1},\dots,\Theta_{n,M_n}$ are allowed to depend on the sample size $n$. For the ease of the description, we let $\Theta_n:=\bigcup_{\k=1}^{M_n}\Theta_{n,\k}$. Under this nested nature of the models, the approximation error decreases as the model index $\k$ increases, while the estimation error increases, giving rise to the bias-variance trade-off along the model ladder.

\subsection{Related statistical methodologies}

\subsubsection{Exponentially weighted aggregate}
\label{subsec:ewa}

For each model $\k\in[M_n]$, suppose that we obtain a candidate estimator $\htheta_{n,\k}$, typically defined as  the empirical risk minimizer over the parameter space $\Theta_{n,\k}$. The exponentially weighted aggregate (EWA) combines these candidate estimators as
    \begin{align*}
        \htheta_{n}^{\le M_n}:=\sum_{\k=1}^{M_n}\uomega_{n,\k}^{\le M_n}\htheta_{n,\k},
    \end{align*}
where each weight $\uomega_{n,\k}$ assigned to the model $\k$ is proportional to $\exp(-\hcL_n)$, the exponential of a suitable estimate $\hcL_n$ of the excess risk $\cL_n(\htheta_{n,\k})$ of  $\htheta_{n,\k}$. Since the empirical risk $\ell_n(\htheta_{n,\k})$ underestimates the population counterpart, we need to introduce an additional \textit{penalty term} $H_{n,\k}$ designed to account for the discrepancy between the empirical and population risks. The choice of the penalty $H_{n,\k}$ depends on the statistical context and the complexity of
the model class $\Theta_{n,\k}$. Under appropriate calibration of the penalty, the penalized empirical risk
$\ell_n(\htheta_{n,\k}) + H_{n,\k}$ provides a valid surrogate for the population excess risk
$\cL_n(\htheta_{n,\k})$, ideally $\Pstar[\ell_n(\htheta_{n,\k})+H_{n,\k}]\asymp\Pstar[\cL_n(\htheta_{n,\k})] $. In this case, EWA satisfies an oracle inequality of the form
    \begin{align*}
        \cL_n( \htheta_{n}^{\le M_n})
        \lesssim \min_{\k\in[M_n]}\cbr{\min_{\theta\in\Theta_{n,\k}}\cL_n(\theta)+H_{n,\k}}
    \end{align*}
up to an additional non-leading remainder term with high probability \citep[e.g.,][]{dalalyan2012sharp,alquier2011pac,rigollet2011exponential}. This result shows that the EWA achieves a performance comparable to that of the best candidate model, optimally balancing approximation and estimation errors.

\subsubsection{Generalized variational Bayes}

For each model $\k\in[M_n]$, we impose the prior distribution $\Pi_{n,\k}$ on $\Theta_{n,\k}$. Given a specified loss function $\ell_n$, the generalized variational Bayes method aims to minimize the variational free energy (VFE) given by
    \begin{align}
        \vfe(Q):=\lambda\int \ell_n(\theta)\d Q(\theta)+\kl(Q,\Pi_{n,\k})
    \end{align}
over the variational distribution $Q$ belonging to the variational family $\cQ_{n,\k}\subset\cP(\Theta_{n,\k})$, which is specified by a user. Here, $\lambda>0$ is a constant called a learning rate, which adjusts the influence of the observed sample to the resulting variational posterior. The generalized variational posterior on model $\Theta_{n,\k}$ is given as the minimizer of the VFE as
    \begin{align}
        \hQ_{n,\k}=\argmin_{Q\in \cQ_{n,\k}} \vfe(Q).
    \end{align}
If we use the negative log-likelihood as a loss function, it recovers a standard (when $\lambda=1)$ and fractional (when $\lambda\in(0,1)$) variational posterior distributions.
We denote by the minimized VFE for a model $\k$ as
    \begin{align}
    \label{eq:vfe}
     \mvfe(\k):=\vfe(\hQ_{n,\k})=\lambda\int \ell_n(\theta)\d \hQ_{n,\k}(\theta)+\kl(\hQ_{n,\k},\Pi_{n,k})
    \end{align}
which will serve as a model selection criterion in our approach. Contraction properties of the generalized variational posterior have been thoroughly investigated in \cite{alquier2016properties}, while those of the variational fractional posterior, as a specific instance, have been analyzed in \citet{alquier2020concentration,yang2020alpha}. 

\subsubsection{Adaptive variational Bayes }
\label{subsec:avb}
A general and computable variational Bayes approach for adaptive inference, based on the aggregation principle,  was proposed by  \cite{ohn2024adaptive}. Within the adaptive variational Bayes framework, first the (generalized) variational posterior $\hQ_{n,\k}$ is computed for each model $\k\in[M_n]$. Then these $M_n$ many posteriors over all candidate models are aggregated. The aggregation weight assigned to the model $\k$ is given as
    \begin{align*}
        \homega_{n,\k}^{\le M_n}:=\frac{ \exp(- \mvfe(\k))}{
            \sum_{\k'\in[M_n]}\exp(- \mvfe(\k'))      }.
    \end{align*}
This leads to the (generalized) adaptive variational posterior given by
    \begin{align*}
        \hQ_{n}^{\le M_n} :=\sum_{\k=1}^{M_n}\homega_{n,\k}^{\le M_n}\hQ_{n,\k}.
    \end{align*}
\cite{ohn2024adaptive} showed that this aggregate of generalized variational posteriors can attain optimal contraction rates adaptively under fairly general conditions.

\section{Early-stopped aggregation for variational Bayes}
\label{sec:method}

While the adaptive variational Bayes approach we have described in \cref{subsec:avb} provides a general and theoretically optimal solution to adaptive inference, it relies on computing variational posteriors across the full range of candidate models. In practice, this may involve substantial computational effort devoted to models that are far more complex than necessary. This motivates a refinement of the aggregation procedure that reduces computational burden without sacrificing statistical guarantees.

\subsection{Early-stopped aggregation of variational posteriors}
To this end, we propose an early stopping mechanism that determines, in a data-driven fashion, how far one needs to proceed along the model ladder. Starting from the simplest model $\k=1$, the procedure monitors the optimized variational free energy as the model index $\k$ increases and terminates the computation once no further improvement is observed. The resulting method preserves the adaptive aggregation principle while substantially reducing computation. 

We now describe this early-stopped aggregation (ESA) approach for variational Bayes in detail. Specifically, we stop the variational Bayes procedure until we reach the \textit{stopping model index} $\hsm$, which is defined as
    \begin{align}
        \hsm:=\inf\cbr{\k\in[M_n]\setminus\{1\}:
        \mvfe(\k-1)< \mvfe(\k)}\wedge M_n.
    \end{align}
In other words, we stop the procedure when the optimized variational free energy increases. The stopping rule admits a natural statistical interpretation. Along the model ladder, increasing model complexity typically reduces approximation error while inflating estimation error. The proposed rule identifies the
point at which further increases in complexity no longer yield sufficient improvement in the data-fitting term to compensate for the increased complexity penalty. In this sense, the stopping index provides a
data-driven approximation to the optimal bias-variance trade-off. 
We propose to use the adaptive variational posterior with early stopping
    \begin{align}
        \esavp
        :=\sum_{\k=1}^{\hsm}\omegaesa\hQ_{n,\k}
          \text{ with }
          \omegaesa:=\frac{ \exp(-\mvfe(\k))}{  \sum_{\k'=1}^{\hsm}  \exp(-\mvfe(\k')) }.
    \end{align}
This ESA procedure is summarized in \cref{alg:esa_Vb}.

\begin{algorithm}[H]
\caption{Early-stopped aggregation for variational Bayes}
Compute $\hQ_{n,1}$ and $\mvfe(1)$ and set $\sm \leftarrow 1$\;
\Repeat{
    $\mvfe(\sm) > \mvfe(\sm-1)$ or $\sm=M_n$
}{
 $\sm \leftarrow \sm + 1$\;
    Compute $\hQ_{n,\sm}$ and $\mvfe(\sm)$\;
}
Set $\hsm \leftarrow \sm$ \tcp*[r]{Early stopping model index}
Compute $\omegaesa :=\exp\left(-\mvfe(\k) \right)/\sum_{\k'=1}^{\hsm}\exp\left(-\mvfe(\k') \right)$ for $\k\in[\hsm]$\;

\Return{The ESA variational posterior $\hQ_n^{\esa} := \sum_{\k=1}^{\hsm} \omegaesa \hQ_{n,\k}.$
}
\label{alg:esa_Vb}
\end{algorithm}

We discuss two algorithmic variants of ESA in the following remarks, respectively.

\begin{remark}[Backward ESA]
The ESA procedure proceeds along the model ladder starting from simpler models
and increasing the model complexity. This choice is primarily motivated by computational considerations, as fitting variational approximations on smaller models is typically cheaper, and early stopping allows the procedure to terminate before reaching highly complex models. From a theoretical perspective, the ESA idea is not tied to a particular direction of traversal along the model ladder.  In principle, we may also start from a sufficiently large model and move toward simpler ones, stopping when the criterion no longer improves.  Such a backward search strategy could also be analyzed theoretically, although we focus on the forward version for computational convenience. 
\end{remark}

\begin{remark}[Earlier termination]
\label{remark:earlier_termination}
As an alternative, we may introduce a ``promoting'' parameter $\delta>0$, in which case the
stopping rule is modified to $ \mvfe(\k-1)< (1+\delta)\mvfe(\k)$. This modification leads to an earlier termination of the early stopping procedure and hence can substantially improve computational efficiency, especially in large-scale problems. While we do not provide a theoretical guarantee for this heuristic modification, we adopt this approach in some large-scale numerical studies to reduce computational costs.
\end{remark}

\subsection{Theoretical properties of ESA for varational Bayes}
\label{sec:theory}

\subsubsection{Assumptions}

Throughout the paper, the constants $\xi_1,\xi_2,\dots,$ are all absolute positive constants that will be fixed throughout our theoretical analysis. The first assumption concerns the probabilistic behavior of the excess loss.

\begin{assumption}
\label{assume:loss}
There exist positive absolute constants $\xi_1$, $\xi_2$ and $\xi_3$ such that
        \begin{align}
        \label{eq:assume_learnability}
         \Pstar\sbr{\exp\del{-\lambda\{\ell_n(\theta)-\ell_n^\star \}}}&\le \exp\del{- \xi_1 \cL_n(\theta)}\\
     \Pstar\sbr{\exp\del{\lambda \xi_2  \{\ell_n(\theta)-\ell_n^\star \}}}
      &\le \exp\del{ \xi_3 \cL_n(\theta)}
    \end{align}
for any $\theta\in \Theta_{n}$ and any sufficiently large $n\in\bN$.
\end{assumption}

\cref{assume:loss} is a slightly simplified version of the Bernstein condition which is standard in theoretical studies for generalized (variational) posteriors \citep[e.g.,][]{alquier2016properties,syring2023gibbs,ohn2024adaptive}. This condition holds when the loss function is a sub-exponential random variable whose variance is bounded by the excess risk up to a constant. This setting covers many statistical applications; see the references we have cited. Also,  fractional likelihood satisfies \cref{assume:loss} (for example, see Proposition M.1 of \citet{ohn2024adaptive}), while standard likelihood violates \eqref{eq:assume_learnability}. Thus, our analysis does not cover the standard variational Bayes approach, which requires more involved theoretical techniques.

We define the \textit{excess risk of a model $\k$} as
    \begin{align}
     \label{eq:terror_model}
      \cE_n(\k):=- \log\del{\int\exp\del{-\xi_1\cL_n(\theta)}\d\Pi_{n,\k}(\theta)},
    \end{align}
which is the log-exp-sum of the excess risk over the prior $\Pi_{n,\k}$ on the model $\k$. This quantity can be viewed as a population version of the variational free energy of an ``oracle'' posterior distribution
    \begin{align*}
        \d\Pi_{n,\k}^{-\xi_1\cL_n}(\theta)
        :=\frac{\exp(-\xi_1\cL_n(\theta))\d\Pi_{n,\k}(\theta)}{\int\exp\del{-\xi_1\cL_n(\theta')}\d\Pi_{n,\k}(\theta')}
        = \frac{\exp(-\xi_1\cL_n(\theta))\d\Pi_{n,\k}(\theta)}{\exp(-\cE_n(\k))}
    \end{align*}
as shown below
    \begin{align*}
    \cE_n(\k)&= -\log(\exp(-\cE_n(\k)))\int\d \Pi_{n,\k}^{-\xi_1\cL_n}(\theta)\\
            &=\int\cbr{\xi_1\cL_n(\theta)+\log\del{\frac{\exp(-\xi_1\cL_n(\theta))}{\exp(-\cE_n(\k))}} }\d \Pi_{n,\k}^{-\xi_1\cL_n}(\theta)\\
        &=\xi_1\int \cL_n(\theta)\d \Pi_{n,\k}^{-\xi_1\cL_n}(\theta)+\kl(\Pi_{n,\k}^{-\xi_1\cL_n},\Pi_{n,\k}).
    \end{align*}
Moreover, since the infimum in Donsker and Varadhan’s variational formula (given in \cref{lemma:variational_formula}) is attained by $\Pi_{n,\k}^{-\xi_1\cL_n}$, we have
   \begin{align}
      \cE_n(\k)&= \inf_{Q\in\cP(\Theta_{n,\k})} \cbr{\xi_1\int \cL_n(\theta)\d Q(\theta)+\kl(Q, \Pi_{n,\k})}.
    \end{align}
This relationship implies that \eqref{eq:terror_model} is equivalent to the optimal trade-off over the model $\k$ between the approximation and estimation errors, represented by the integrated excess risk $\int \cL_n(\theta)\d Q(\theta)$ and the KL divergence to the prior  $\kl(Q, \Pi_{n,\k})$, respectively.

\begin{assumption}
\label{assume:variational_family}
There exists an absolute constant $\xi_4>0$ such that
        \begin{align}
        \label{eq:assume_vb_approx}
        \log\del{\int \exp(\xi_3\cL_n(\theta) )\d Q_{n,\k}^*(\theta)} &\le \xi_4 \cE_n(\k)\\
        \kl(Q_{n,\k}^*, \Pi_{n,\k}) &\le \xi_4 \cE_n(\k)
    \end{align}
for some distribution $Q_{n,\k}^*\in \cQ_{n,\k}$ for any $\k\in[M_n]$ and any sufficiently large $n\in \bN$. 
\end{assumption}

\cref{assume:variational_family} requires the existence of an ``oracle'' variational posterior $Q_{n,\k}^*$ within the variational family $ \cQ_{n,\k}$, which provides a almost optimal trade-off between approximation and estimation errors. While such requirements on the variational family are common in the related literature such as \citep{zhang2020convergence,alquier2020concentration,yang2020alpha,ohn2024adaptive},  here we impose a slightly stronger version. Specifically, the condition \eqref{eq:assume_vb_approx} involves the exponential moment of the excess risk over $ Q_{n,\k}^*$,  which is, in view of Jensen's inequality, a stronger requirement than the usual expectation-based condition $\int \cL_n(\theta)\d Q_{n,\k}^*(\theta)\lesssim \cE_n(\k)$ found in the earlier works  mentioned above.  
By assuming this stronger light-tailed condition on the  oracle variational distribution (e.g., Gaussian or uniform distributions), we are able to attain exponentially decaying probability bounds in theorems in what follows. 

The following proposition illustrates $\cE_n(\k)$ is an upper bound of the average excess risk over the variational posterior $\hQ_{n,\k}$.
\begin{proposition}
\label{prop:risk_bound_vb}
Under \cref{assume:loss,assume:variational_family}, there exists an absolute constant $\tC_1>0$ such that
        \begin{align*}
             \Pstar\sbr{ \int \cL_n(\theta)\d \hQ_{n,\k}(\theta)}
             \le \tC_1\cE_n(\k)
        \end{align*}
  for any $\k\in[M_n]$ and any sufficiently large $n\in \bN$.
\end{proposition}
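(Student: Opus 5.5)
The proof follows the standard PAC-Bayesian argument: Donsker--Varadhan duality, the exponential-moment bound in \cref{assume:loss}, and the optimality of $\hQ_{n,\k}$ within $\cQ_{n,\k}$.

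\textbf{Step 1: change of measure.} Apply the Donsker--Varadhan formula (\cref{lemma:variational_formula}) with the function $g(\theta)=\xi_1\cL_n(\theta)-\lambda\lossdiff(\theta)$ and reference measure $\Pi_{n,\k}$. For every $Q\in\cP(\Theta_{n,\k})$ this gives
\begin{align*}
\int\{\xi_1\cL_n(\theta)-\lambda\lossdiff(\theta)\}\d Q(\theta)\le \kl(Q,\Pi_{n,\k})+\log\int\exp\{\xi_1\cL_n(\theta)-\lambda\lossdiff(\theta)\}\d\Pi_{n,\k}(\theta).
\end{align*}
Take $Q=\hQ_{n,\k}$, which is legitimate because the inequality holds simultaneously for all $Q$ and the right-hand side involves $Q$ only through the KL term. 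Rearranging, and writing $Z_n:=\log\int\exp\{\xi_1\cL_n-\lambda\lossdiff\}\d\Pi_{n,\k}$, we obtain
\begin{align*}
\xi_1\int\cL_n\d\hQ_{n,\k}\le \lambda\int\lossdiff\d\hQ_{n,\k}+\kl(\hQ_{n,\k},\Pi_{n,\k})+Z_n.
\end{align*}
The first two terms on the right equal $\mvfe(\k)-\lambda\ell_n^\star$.

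\textbf{Step 2: bound $Z_n$ in expectation.} By Jensen's inequality, $\Pstar Z_n\le\log\Pstar\int\exp\{\xi_1\cL_n-\lambda\lossdiff\}\d\Pi_{n,\k}$. Fubini's theorem together with the first inequality of \cref{assume:loss} gives $\Pstar\exp(-\lambda\lossdiff(\theta))\le\exp(-\xi_1\cL_n(\theta))$, so the integrand has expectation at most $1$ and $\Pstar Z_n\le0$. If the integrability needed for Fubini is not available, one can pass to nonnegative integrands, which is harmless here.

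\textbf{Step 3: use VFE optimality.} Since $\hQ_{n,\k}$ minimizes the VFE over $\cQ_{n,\k}$ and $Q^*_{n,\k}\in\cQ_{n,\k}$, we have $\mvfe(\k)-\lambda\ell_n^\star\le\lambda\int\lossdiff\d Q^*_{n,\k}+\kl(Q^*_{n,\k},\Pi_{n,\k})$. Taking expectations and applying Fubini, $\Pstar\lambda\int\lossdiff\d Q^*_{n,\k}=\lambda\int\cL_n\d Q^*_{n,\k}$, because $\Pstar\lossdiff(\theta)=\cL_n(\theta)$ by definition. To control $\int\cL_n\d Q^*_{n,\k}$, apply Jensen's inequality, $\xi_3\int\cL_n\d Q^*\le\log\int e^{\xi_3\cL_n}\d Q^*\le\xi_4\cE_n(\k)$. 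This uses $\xi_3>0$, which holds since $\xi_3$ is an absolute positive constant. The KL term is at most $\xi_4\cE_n(\k)$ directly by \cref{assume:variational_family}.

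\textbf{Step 4: combine.} Combining Steps 1--3,
\begin{align*}
\xi_1\Pstar\int\cL_n\d\hQ_{n,\k}\le(\lambda\xi_4/\xi_3+\xi_4)\cE_n(\k),
\end{align*}
which yields the claim with $\tC_1=\xi_4(\lambda/\xi_3+1)/\xi_1$. Here $\lambda$ is treated as a fixed constant. Alternatively, one could route the bound through the second inequality of \cref{assume:loss} to absorb $\lambda$ into $\xi_2$; with $\lambda$ fixed, the simpler route above suffices.

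\textbf{Main obstacle.} The delicate point is measurability and integrability in Step 1, where $\hQ_{n,\k}$ is data-dependent. This is handled by the fact that the Donsker--Varadhan inequality holds pointwise in the sample for every $Q$ at once. A related technicality is justifying the exchange of $\Pstar$ with integration against the fixed $Q^*_{n,\k}$ and $\Pi_{n,\k}$, which I would settle via Tonelli's theorem: the loss is nonnegative and the exponential moments in \cref{assume:loss} are finite.
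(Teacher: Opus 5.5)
Your proof is correct and follows essentially the same route as the paper. Both use Donsker--Varadhan with $g=\xi_1\cL_n-\lambda\lossdiff$, the first bound of \cref{assume:loss}, VFE-optimality against $Q^*_{n,\k}$, and Jensen on the exponential moment of $\cL_n$ under $Q^*_{n,\k}$, and both reach the same constant $\xi_4(\lambda/\xi_3+1)/\xi_1$. Your Step~2 is in fact more careful than the paper: the paper drops the log-partition term $Z_n$ and asserts the change-of-measure inequality ``everywhere,'' but it only holds after taking expectations and using $\Pstar Z_n\le 0$, exactly as you argue.
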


\subsubsection{Not-too-early-stopped property}

In this subsection, we show that the early stopping procedure contains  a ``near-optimal'' model with a probability converging to 1. For a given constant $\tau>0$, we let  $\kopt(\tau)\in[M_n]$ be a model index defined as
    \begin{align}
      \label{eq:kopt_model}
        \kopt(\tau)
        :=\inf\cbr{\k\in[M_n]:\cE_{n}(\k)\le (1+\tau)\cE_{n}(\k+1)}\wedge M_n.
    \end{align}
That is, for any $\k\in[ \kopt(\tau)-1 ]$, we have $\cE_{n}(\k)> (1+\tau)\cE_{n}(\k+1)$. This is not equal to the optimal model minimizing the excess risk always, but it is in some situations. The next proposition states that the model $\kopt(\tau)$ is equal to the optimal model when the excess risk curve is U-shaped.

\begin{proposition}
\label{prop:ushaped_risk}
Suppose that we consider $M$ models for each $n\in\bN$ and assume that  $\cE_n(\k)\asymp n^{\alpha(\k)}$ (up to a multiplicative poly-logarithmic factor) for all $\k\in[M]$, where $\{\alpha(\k):\k\in[M]\}$ is a collection of real numbers such that $\alpha(1)>\cdots>\alpha(\k^\star-1)>\alpha(\k^\star)<\alpha(\k^\star+1)<\cdots<\alpha(M)$. Then for any $\tau>0$, we have $ \kopt(\tau)=\k^\star$ eventually.
\end{proposition}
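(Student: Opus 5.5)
The plan is to compare consecutive ratios $\cE_n(\k)/\cE_n(\k+1)$ directly against $1+\tau$, using the polynomial growth rates. By assumption there are constants $0<c\le C<\infty$ and poly-logarithmic factors such that, for every $\k\in[M]$ and all large $n$,
\begin{align*}
c\, n^{\alpha(\k)}(\log n)^{-b}\le \cE_n(\k)\le C\, n^{\alpha(\k)}(\log n)^{b}
\end{align*}
for some fixed $b\ge0$. Since $M$ does not depend on $n$, these constants can be taken uniform over $\k\in[M]$. Hence for every $\k\in[M-1]$,
\begin{align*}
\frac{c}{C}\, n^{\alpha(\k)-\alpha(\k+1)}(\log n)^{-2b}\le \frac{\cE_n(\k)}{\cE_n(\k+1)}\le \frac{C}{c}\, n^{\alpha(\k)-\alpha(\k+1)}(\log n)^{2b}.
\end{align*}

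First step: for $\k<\k^\star$ we have $\alpha(\k)-\alpha(\k+1)>0$. The lower bound therefore tends to infinity, and eventually $\cE_n(\k)>(1+\tau)\cE_n(\k+1)$. This holds simultaneously for the finitely many $\k\in[\k^\star-1]$ once $n$ is large. Thus no index $\k<\k^\star$ belongs to the set defining $\kopt(\tau)$ in \eqref{eq:kopt_model}, which gives $\kopt(\tau)\ge\k^\star$.

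Second step: if $\k^\star<M$, then $\alpha(\k^\star)-\alpha(\k^\star+1)<0$. The upper bound on the ratio at $\k=\k^\star$ tends to zero, so eventually $\cE_n(\k^\star)\le(1+\tau)\cE_n(\k^\star+1)$. Hence $\k^\star$ lies in the set and $\kopt(\tau)\le\k^\star$. If instead $\k^\star=M$, the first step combined with the truncation $\wedge M_n=M$ already forces $\kopt(\tau)=M=\k^\star$. Note that for the index $\k=M$ the quantity $\cE_n(M+1)$ is undefined, but this causes no trouble because the $\wedge M$ in the definition covers that case. Combining the two steps gives $\kopt(\tau)=\k^\star$ for all sufficiently large $n$.

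The only delicate point is interpreting ``$\asymp$ up to a poly-logarithmic factor'' so that the factors are uniform over the finitely many $\k$; I would state this interpretation explicitly. Once it is fixed, the strict inequalities between exponents dominate any polylog factor, since $n^{\epsilon}(\log n)^{-2b}\to\infty$ for every $\epsilon>0$. The main thing to handle carefully is therefore only this uniformity, together with the boundary case $\k^\star=M$ (and, symmetrically, $\k^\star=1$, where the first step is vacuous).
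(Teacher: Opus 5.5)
Your argument is correct and is essentially the paper's proof. The ratios $\cE_n(\k)/\cE_n(\k+1)\asymp n^{\alpha(\k)-\alpha(\k+1)}$ diverge for $\k<\k^\star$, giving $\kopt(\tau)\ge\k^\star$, and vanish at $\k=\k^\star$, giving $\kopt(\tau)\le\k^\star$; your explicit treatment of uniformity over the finitely many $\k$, the boundary case $\k^\star=M$, and the indexing (comparing $\cE_n(\k)$ with $\cE_n(\k+1)$ for $\k\le\k^\star-1$, as the definition of $\kopt(\tau)$ requires) makes rigorous what the paper's two-line proof leaves implicit.
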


\begin{proof}
By assumption, for $\k<\k^\star$, $\cE_n(\k-1)/\cE_n(\k)\asymp n^{\alpha(\k-1)-\alpha(\k)}\to  \infty$
as $n\to \infty$.  This implies that $\cE_n(\k-1)> (1+\tau)\cE_n(\k)$, and thus, $\kopt(\tau)\ge\k^\star$ eventually. Similarly, $\cE_n(\k^\star)\le (1+\tau)\cE_n(\k^\star+1)$, and thus,  $\kopt(\tau)\le \k^\star$ eventually.
\end{proof}

\begin{example}
Consider a Gaussian sequence model $\Ptheta:=\otimes_{i=1}^\infty \N(\theta_i,n^{-1})$ with an infinite-dimensional parameter $\theta:=(\theta_i)_{i\in\bN}$. Suppose that the sample $X^{(n)}$ is generated from the true distribution $\P_{\theta^\star}^{(n)}$ indexed by the true parameter $\theta^\star:=(\theta^\star_i)_{i\in\bN}$ with $\theta^\star_i:=i^{-\beta_*-1/2}$ for $\beta_*>0$. Note that  $\theta^\star$ belongs to a Sobolev ball with smoothness $\beta\in(0,\beta_*)$. We consider a square loss function $\ell_n(\theta)=n\|X^{(n)}-\theta\|_2^2$. Then the corresponding excess risk is given by $\cL_n(\theta)=n\|\theta-\theta^\star\|_2^2$. Let $\{q(\k):\k\in[M]\}$ be a collection of real numbers such that $0\le q(1)<q(2)<\cdots <q(M)\le 1$. For each $\k$, we consider a model given as
    \begin{align*}
        \Theta_{n,\k}:=\cbr{\theta=(\theta_i)_{i\in\bN}:\theta_{i}=0\text{ for any }i>n^{q(\k)}}
    \end{align*}
and impose a prior distribution $\Pi_{n,\k}$ on $\Theta_{n,\k}$, under which $\theta_i\sim \N(0,1)$ independently for $i\le n^{q(\k)} $ and $\theta_i=0$ for $i>n^{q(\k)}$. Then it is easy to see that
    \begin{align*}
        \cE_n(\k)\asymp n\sum_{i>n^{q(\k)}}(\theta_i^\star)^2 + n^{q(\k)}\log n
        \asymp n^{1-2\beta_*q(\k)}+n^{q(\k)}\log n.
    \end{align*}
Therefore, the condition of \cref{prop:ushaped_risk} holds with $\alpha(\k)=(1-2\beta_*q(\k))\vee q(\k)$, and $\k^\star$ is the model index for which $q(\k^\star)$ is closest to $1/(2\beta_*+1)$.
\end{example}

The following theorem shows that our early-stopping strategy does not halt the procedure prematurely.

\begin{theorem}[Not-too-early-stopping]
\label{thm:selection_vb}
Under \cref{assume:loss,assume:variational_family}, there exist absolute constants $\tau_*>0$ and $\tC_2>0$ such that
    \begin{align}
       \Pstar\del[1]{\hsm\ge \kopt(\tau_*) +1}\ge 1-\eta_n
    \end{align}
for any sufficiently large $n\in \bN$, where we let $\eta_n:=\kopt(\tau_*)\exp\del[1]{-\tC_2 \cE_n(\kopt(\tau_*))}.$
\end{theorem}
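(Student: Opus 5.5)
The event $\{\hsm\le \kopt(\tau_*)\}$ is contained in $\bigcup_{\k=2}^{\kopt(\tau_*)}\{\mvfe(\k-1)<\mvfe(\k)\}$. By the definition of $\kopt(\tau_*)$, every such $\k$ satisfies $\cE_n(\k-1)>(1+\tau_*)\cE_n(\k)$. So it suffices to show that, for each such $\k$,
\begin{align*}
\Pstar\del{\mvfe(\k-1)<\mvfe(\k)}\le \exp\del{-\tC_2\,\cE_n(\k-1)}\le \exp\del{-\tC_2\,\cE_n(\kopt(\tau_*))}.
\end{align*}
The last inequality holds because $\cE_n$ is decreasing along $\k\le\kopt(\tau_*)$. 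A union bound over at most $\kopt(\tau_*)$ indices then gives $\eta_n$.

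Throughout I work with the centered VFE $F_\k:=\mvfe(\k)-\lambda\ell_n^\star$. Centering does not change the comparison between $\mvfe(\k-1)$ and $\mvfe(\k)$. I will prove two one-sided tail bounds.

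\emph{Upper bound on $F_\k$.} Since $\hQ_{n,\k}$ minimizes the VFE, $F_\k\le \lambda\int\lossdiff\,\d Q^*_{n,\k}+\kl(Q^*_{n,\k},\Pi_{n,\k})$. I apply Markov's inequality to $\exp(\xi_2 \lambda\int \lossdiff\,\d Q^*_{n,\k})$. Jensen's inequality bounds this by $\int\exp(\xi_2\lambda\lossdiff)\,\d Q^*_{n,\k}$. Fubini together with the second display of \cref{assume:loss}, and then \cref{assume:variational_family}, bound its $\Pstar$-expectation by $\exp(\xi_4\cE_n(\k))$. Hence
\begin{align*}
\Pstar\del{F_\k> C\cE_n(\k)+t}\le e^{-\xi_2 t}
\end{align*}
with $C=\xi_4(1+1/\xi_2)$. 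Here $\xi_2\le 1$ may be assumed without loss of generality.

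\emph{Lower bound on $F_{\k-1}$.} This is the key step. By the Donsker--Varadhan formula (\cref{lemma:variational_formula}), for any $Q$ we have
\begin{align*}
\lambda\int\lossdiff\,\d Q+\kl(Q,\Pi_{n,\k-1})\ge -\log\int e^{-\lambda\lossdiff}\,\d\Pi_{n,\k-1}.
\end{align*}
Therefore $F_{\k-1}\ge -\log Z$ with $Z:=\int e^{-\lambda\lossdiff}\,\d\Pi_{n,\k-1}$. By Fubini and the first display of \cref{assume:loss}, $\Pstar Z\le e^{-\cE_n(\k-1)}$. Markov's inequality then gives
\begin{align*}
\Pstar\del{F_{\k-1}<\cE_n(\k-1)-t}\le e^{-t}.
\end{align*}
The fact that this bound involves the prior and not the variational family is what makes it clean.

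\emph{Combining.} Set $a=\cE_n(\k-1)$ and $b=\cE_n(\k)$, so that $a>(1+\tau_*)b$. Take $t=a/4$ in the lower bound and $t'$ with $Cb+t'= a-a/4$ in the upper bound. Then on the complement of both bad events, $F_{\k-1}\ge 3a/4\ge F_\k$, and this inequality is strict, so $\mvfe(\k-1)\ge\mvfe(\k)$. We need $t'=3a/4-Cb\gtrsim a$. This holds if $b\le a/(1+\tau_*)$ with $\tau_*$ large, e.g.\ $1+\tau_*\ge 4C$, which gives $t'\ge a/2$. The resulting probability bound is $e^{-a/4}+e^{-\xi_2a/2}\le 2e^{-\tC_2' a}$. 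The factor 2 can be absorbed by shrinking $\tC_2$, provided $\cE_n$ is bounded below for large $n$; otherwise I would keep it as a constant in $\eta_n$. I expect this constant bookkeeping, namely matching the VFE's upper constant $C$ against the lower constant $1$ via the choice of $\tau_*$, to be the main point to check. The subtle issue is that $\tau_*$ must depend only on $\xi_2,\xi_4$, and it does.
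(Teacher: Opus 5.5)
Your proposal is correct and is essentially the paper's own proof. The paper likewise:
\begin{itemize}
\item centers the free energy as $\Gamma_n(\k)=\mvfe(\k)-\lambda\ell_n^\star$;
\item bounds the upper tail of $\Gamma_n(\k)$ through the oracle $Q^*_{n,\k}$, Markov's inequality and the exponential-moment bound;
\item bounds the lower tail of $\Gamma_n(\k-1)$ through the Donsker--Varadhan formula and the first display of \cref{assume:loss};
\item takes $\tau_*$ large enough to separate the two thresholds, then applies a union bound over $\k\le\kopt(\tau_*)$.
\end{itemize}

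The differences are cosmetic. The paper splits at $\tC'\cE_n(\k)$ instead of $\tfrac34\cE_n(\k-1)$, and it silently absorbs the same factor $2$ you flagged. That factor needs no lower bound on $\cE_n$ if you also take $\tau_*\ge1$. Either $\tC_2\tau_*\cE_n(\kopt(\tau_*))\ge\log 2$, in which case the extra factor $e^{-\tC_2\tau_*\cE_n(\kopt(\tau_*))}$ coming from $\cE_n(\k-1)>(1+\tau_*)\cE_n(\kopt(\tau_*))$ absorbs the $2$. Or else $\eta_n\ge\kopt(\tau_*)2^{-1/\tau_*}\ge1$ when $\kopt(\tau_*)\ge2$, and the claim is vacuous.
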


\begin{remark}[Application to hyperparameter tuning]
The ESA procedure can also be applied to hyperparameter tuning. In this setting, we consider a single ``large'' parameter space $\Theta_n$ together with a collection of prior distributions $\Pi_{n,1},\dots,\Pi_{n,M_n}$ on $\Theta_n$, corresponding to different choices of a hyperparameter. For each prior $\Pi_{n,\k}$, the excess risk is defined as the same as \eqref{eq:terror_model}. Consequently, \cref{thm:selection_vb} ensures a not-too-early stopping property for the resulting hyperparameter tuning procedure based on early stopping. 
\end{remark}

\subsubsection{Oracle contraction rates}

In this section, we investigate the contraction rate of the ESA variational posterior in terms of the excess risk. Toward this aim, we first show that the ``non-variational'' counterpart of the ESA variational posterior contracts at the oracle rate and then show that the variational approximation error of the ESA variational posterior to this non-variational posterior is small. Given the selected upper bound $\hsm$, we define the prior distribution as
    \begin{align*}
        \Pi_{n,\le\hsm}:=\frac{1}{\hsm}\sum_{\k=1}^{\hsm}\Pi_{n,\k}
    \end{align*}
and the corresponding generalized posterior as
    \begin{align*}
        \d\esapst(\theta):=  
        \frac{\exp(-\lambda\ell_n(\theta))\d\Pi_{n,\le\hsm}(\theta)}{\int \exp(-\lambda\ell_n(\theta'))\d\Pi_{n,\le\hsm}(\theta')}.
    \end{align*}
That is, $\esapst$ is as the posterior distribution over the union of the models from the first to the $\hsm$-th with the prior distribution $\Pi_{n,\le\hsm}$.

The next theorem, as an intermediate result, shows that the ESA posterior can attain the oracle rate
    \begin{align}
        \cE_n^*:=\min\cbr{\cE_n(\k):\k\in[\kopt(\tau_*)+1]}
        =\cE_n(\kopt(\tau_*))\wedge \cE_n(\kopt(\tau_*)+1).
    \end{align}
    
\begin{theorem}[Contraction rate of posterior with early stopping]
\label{thm:contract_posterior}
Assume that $\eta_n\to0$ as $n\to\infty$. Then under the same assumption of \cref{thm:selection_vb},
    \begin{align}
       \Pstar\sbr{\esapst\del{ \cL_n(\theta)\ge A_n( \cE_n^*+\log M_n)}}\to0
    \end{align}
as $n\to\infty$ for any diverging sequence $(A_n)_{n\in\bN}$ with $A_n\to\infty$.
\end{theorem}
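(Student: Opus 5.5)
Throughout, write $k_0:=\kopt(\tau_*)$ and let $\k^*\in[k_0+1]$ attain $\cE_n^*=\cE_n(\k^*)$. The plan is to run the standard Gibbs-posterior contraction argument (numerator/denominator) for a prior that depends on the data only through $\hsm$.

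\textbf{Step 1: remove the dependence on $\hsm$.} By \cref{thm:selection_vb}, the event $\Omega_n:=\{\hsm\ge k_0+1\}$ has probability at least $1-\eta_n\to0$ complement-wise, so it suffices to work on $\Omega_n$. On $\Omega_n$ we have $\k^*\le\hsm\le M_n$. Hence the prior satisfies $\Pi_{n,\le\hsm}\ge \hsm^{-1}\Pi_{n,\k^*}\ge M_n^{-1}\Pi_{n,\k^*}$, which lower bounds the denominator. For the numerator, $\Pi_{n,\le\hsm}\le \hsm^{-1}\sum_{\k\le M_n}\Pi_{n,\k}\le \sum_{\k\le M_n}\Pi_{n,\k}$; this is a deterministic measure of total mass at most $M_n$. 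Thus for $B_n:=\{\cL_n(\theta)\ge A_n(\cE_n^*+\log M_n)\}$,
\[
\esapst(B_n)\le \frac{M_n\int_{B_n} e^{-\lambda\lossdiff(\theta)}\d\bar\Pi_n(\theta)}{\int e^{-\lambda\lossdiff(\theta)}\d\Pi_{n,\k^*}(\theta)},\qquad \bar\Pi_n:=M_n^{-1}\sum_{\k\le M_n}\Pi_{n,\k}.
\]
Here we multiplied numerator and denominator by $e^{\lambda\ell_n^\star}$ to pass to the excess loss $\lossdiff$.

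\textbf{Step 2: numerator.} Apply Fubini and the first inequality of \cref{assume:loss}:
\[
\Pstar\int_{B_n}e^{-\lambda\lossdiff}\d\bar\Pi_n\le \int_{B_n}e^{-\xi_1\cL_n}\d\bar\Pi_n\le e^{-\xi_1A_n(\cE_n^*+\log M_n)}.
\]
By Markov's inequality, with probability tending to one the numerator is at most $M_n e^{-\xi_1A_n(\cE_n^*+\log M_n)/2}$.

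\textbf{Step 3: denominator (the main step).} I would use the Donsker--Varadhan formula (\cref{lemma:variational_formula}) with the oracle distribution $Q^*_{n,\k^*}$ from \cref{assume:variational_family}:
\[
-\log\int e^{-\lambda\lossdiff}\d\Pi_{n,\k^*}\le \lambda\int\lossdiff\,\d Q^*_{n,\k^*}+\kl(Q^*_{n,\k^*},\Pi_{n,\k^*}).
\]
The KL term is bounded by $\xi_4\cE_n^*$. For the random term $Z:=\lambda\int\lossdiff\,\d Q^*$, Jensen's inequality and the second inequality of \cref{assume:loss} give
\[
\Pstar e^{\xi_2 Z}\le \int \Pstar e^{\lambda\xi_2\lossdiff}\d Q^*\le \int e^{\xi_3\cL_n}\d Q^*\le e^{\xi_4\cE_n^*}.
\]
Hence $\Pstar(Z\ge \xi_4\cE_n^*/\xi_2+t)\le e^{-\xi_2 t}$. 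Taking $t=A_n^{1/2}(\cE_n^*+\log M_n)$, the denominator is at least $\exp\{-C(1+A_n^{1/2})(\cE_n^*+\log M_n)\}$ with probability $1-o(1)$.

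The delicate point is that $\k^*$ is deterministic, which is why we deliberately avoid going through $\hsm$ here. If $\cE_n^*$ does not diverge, the slack $\log M_n$ together with $A_n\to\infty$ still makes the tail probability vanish. This requires $\log M_n\ge c>0$, or else the constant is absorbed by taking $A_n^{1/2}$ times a constant.

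\textbf{Step 4: combine.} On the intersection of the three high-probability events,
\[
\esapst(B_n)\le \exp\{\log M_n-(\xi_1A_n/2-C-CA_n^{1/2})(\cE_n^*+\log M_n)\}\to0.
\]
Since $\esapst(B_n)\le1$, bounded convergence then yields $\Pstar[\esapst(B_n)]\to0$.
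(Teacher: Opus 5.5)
Your proposal is correct and is essentially the paper's argument. Like the paper, you restrict to $\{\hsm\ge\kopt(\tau_*)+1\}$ via \cref{thm:selection_vb}, lower-bound the normalizing constant through the fixed oracle model $\koracle$ by Donsker--Varadhan with $Q^*_{n,\koracle}$ and the exponential-moment bound of \cref{lemma:exp_moment_bound}, and control the numerator by Fubini and \cref{assume:loss}. The only cosmetic difference is that you dominate the random mixture by the deterministic $\sum_{\k\le M_n}\Pi_{n,\k}$ and use Markov, whereas the paper keeps an $L^1$ bound and sums over the possible values of $\hsm$.

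Your hedge about $\cE_n^*$ possibly not diverging is unnecessary. Since $\eta_n\ge\exp(-\tC_2\cE_n(\kopt(\tau_*)))$ and $\cE_n^*\ge\cE_n(\kopt(\tau_*))/(1+\tau_*)$ by the definition of $\kopt(\tau_*)$, the hypothesis $\eta_n\to0$ already forces $\cE_n^*\to\infty$.
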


\begin{remark}[Variational assisted Bayesian inference]
We note that even when the goal is to compute the non-variational ESA posterior, the stopping model index can be determined using variational approximations. This suggests a practical strategy for Bayesian inference, in which variational Bayes is used solely to identify a suitable upper bound on model complexity, after which the original posterior distribution can be computed on a substantially reduced model space.
\end{remark}

Based on the above contraction result of the exact posterior  $\esapst$ with the ESA approach, we can attain the following contraction result of its variational approximation $ \esavp$ by showing that its variational approximation error is not much large.

\begin{theorem}[Contraction rate of ESA variational posterior]
\label{thm:contract_esva}
Under the same assumption of \cref{thm:contract_posterior},
    \begin{align}
       \Pstar\sbr{  \esavp\del{ \cL_n(\theta)\ge A_n( \cE_n^*+\log M_n)}}\to0
    \end{align}
as $n\to\infty$ for any diverging sequence $(A_n)_{n\in\bN}$ with $A_n\to\infty$.
\end{theorem}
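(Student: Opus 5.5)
The plan is the standard route from a contraction result for an exact (Gibbs) posterior to one for its variational approximation, via the change-of-measure inequality. Fix $A_n\to\infty$ and write $B_n:=\{\theta:\cL_n(\theta)\ge A_n(\cE_n^*+\log M_n)\}$. Conditional on the data, Donsker--Varadhan (\cref{lemma:variational_formula}) gives, for any measurable $g\ge0$,
\begin{align*}
\esavp(B_n)\,g \le \kl(\esavp,\esapst)+\log \esapst\del{e^{g\ind_{B_n}}}.
\end{align*}
I will take $g$ proportional to $\log(1/\esapst(B_n))$, or use the cleaner version $\esavp(B)\le \{\kl(\esavp,\esapst)+\log 2\}/\log(1+1/\esapst(B))$. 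Either way, the problem reduces to two tasks. First, show that $\esapst(B_n)$ is exponentially small, of order $\exp(-c A_n(\cE_n^*+\log M_n))$, with probability tending to one. Second, show that $\kl(\esavp,\esapst)\lesssim \cE_n^*+\log M_n$ with probability tending to one. The ratio of the two then tends to zero, and bounded convergence gives convergence in expectation.

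For the first task I will revisit the proof of \cref{thm:contract_posterior} rather than merely quote its conclusion. That proof presumably bounds the numerator $\int_{B_n}e^{-\lambda\lossdiff}\d\Pi_{n,\le\hsm}$ in $\Pstar$-expectation by $e^{-\xi_1 A_n(\cdot)}$ using \cref{assume:loss}. It lower-bounds the denominator by $e^{-C(\cE_n^*+\log M_n)}$ on the event of \cref{thm:selection_vb}, where the prior includes model $\kopt(\tau_*)$ and model $\kopt(\tau_*)+1$ with weight $\ge 1/M_n$. Markov's inequality applied with a slightly smaller exponent, say $A_n/2$, then yields the exponential smallness in probability. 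If only the weaker statement is available, I will redo this step with $A_n$ replaced by $\sqrt{A_n}$ to keep both a rate and a vanishing probability.

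For the second task, I will write $\kl(\esavp,\esapst)=\kl(\esavp,\Pi_{n,\le\hsm})+\lambda\int\ell_n\d\esavp+\log Z_n$, where $Z_n=\int e^{-\lambda\ell_n}\d\Pi_{n,\le\hsm}$. The mixture structure is key here. The first two terms are bounded by convexity of KL in joint mixtures: view $\esavp$ and $\Pi_{n,\le\hsm}$ as marginals of joint laws on $(\k,\theta)$ with mixing weights $\omegaesa$ and $1/\hsm$. This gives
\begin{align*}
\kl(\esavp,\Pi_{n,\le\hsm})+\lambda\int\ell_n\d\esavp\le \sum_{\k\le\hsm}\omegaesa\,\mvfe(\k)+\kl(\omegaesa,\mathrm{Unif}[\hsm]).
\end{align*}
With the exponential weights, the right-hand side equals $-\log\sum_{\k\le\hsm}e^{-\mvfe(\k)}+\log\hsm$, which is at most $\mvfe(\k_0)+\log M_n$ for $\k_0\in\{\kopt(\tau_*),\kopt(\tau_*)+1\}$ attaining $\cE_n^*$. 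On the event $\hsm\ge\kopt(\tau_*)+1$, this $\k_0$ is indeed included. I will subtract $\lambda\ell_n^\star$ throughout so everything is in terms of $\lossdiff$. Then $\mvfe(\k_0)-\lambda\ell_n^\star\le \lambda\int\lossdiff\d Q^*_{n,\k_0}+\kl(Q^*_{n,\k_0},\Pi_{n,\k_0})$. Its second term is $\le\xi_4\cE_n^*$ by \cref{assume:variational_family}. For its first term, the exponential moment bound in \cref{assume:loss} combined with Fubini and \cref{assume:variational_family} gives $\Pstar e^{\xi_2\lambda\int\lossdiff\d Q^*}\le e^{\xi_4\cE_n^*}$, so the term is $O_P(\cE_n^*)$ by Markov. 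The remaining piece, $\log Z_n+\lambda\ell_n^\star=\log\int e^{-\lambda\lossdiff}\d\Pi_{n,\le\hsm}$, has positive part bounded in probability by Markov. Since $\hsm$ is random, I bound $\int e^{-\lambda\lossdiff}\d\Pi_{n,\le\hsm}\le M_n\cdot\frac1{M_n}\sum_{\k\le M_n}\int e^{-\lambda\lossdiff}\d\Pi_{n,\k}$, whose expectation is $\le M_n$ by \cref{assume:loss}; this contributes at most $\log M_n+O_P(1)$.

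The main obstacle I anticipate is the interplay between the random index $\hsm$ and the high-probability statements. Both the denominator lower bound and the choice of $\k_0$ require the event $\{\hsm\ge\kopt(\tau_*)+1\}$ from \cref{thm:selection_vb}, whose failure probability $\eta_n\to0$ must be added in. The numerator and normalizer bounds must be made uniform over $\hsm$, which I handle crudely by summing over all $M_n$ models, at a cost of $\log M_n$ that is already present in the rate. Combining the two tasks, on an event of probability $1-o(1)$ we get $\esavp(B_n)\lesssim (\cE_n^*+\log M_n)/(cA_n(\cE_n^*+\log M_n))\to0$, which gives the claim.
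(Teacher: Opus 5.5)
Your proposal is correct and follows essentially the paper's argument. Both use the same ingredients:
\begin{itemize}
\item the change-of-measure inequality from \cref{lemma:variational_formula};
\item the split of $\kl(\esavp,\esapst)$ into the mixture free energy, which on $\{\hsm\ge\kopt(\tau_*)+1\}$ is bounded via the mixture-KL bound of \cref{lemma:composite_elbo_max} by $\mvfe(\koracle)+\log\hsm$ and then through the oracle $Q^*$, plus the log-normalizer, controlled by summing over all $M_n$ models;
\item the explicit exponential-rate bound on $\esapst(\Xi_n)$ taken from inside the proof of \cref{thm:contract_posterior}.
\end{itemize}
The only difference is cosmetic. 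The paper works in expectation with the deterministic choice $a=T_n=\sqrt{A_n}(\cE_n^*+\log M_n)$ in \eqref{eq:kl_ineq_prob}. You instead use high-probability bounds with a data-dependent $a$ and finish with bounded convergence.
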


\subsubsection{Not-too-late-stopped property}

Let $\kopt:=\kopt(\tau_*)$ be the model index considered in \cref{thm:selection_vb}. In this section, we show that under a certain additional condition, our ESA procedure excludes overly complex models, i.e., stopping occurs not too late. This supports the computational efficiency of the ESA procedure. For $\nu>0$, we define a model index $\kover(\nu)$ as an index such that
    \begin{align}
        \kover(\nu):=\sup\{\k\in[M_n]\setminus[\kopt-1]:\cE_n(\k) \le (1+ \nu)\cE_n^*\}\vee \kopt
    \end{align}
    
\begin{theorem}
\label{thm:overestimation}
Under the same assumptions of \cref{thm:selection_vb}, there exist absolute constants $\nu_*>0$ and $\tC_3>0$ such that
    \begin{align}
       \Pstar\del[1]{\hsm\le \kover(\nu_*)+1 }\ge 1-2\exp\del{-\tC_3 \cE_n^*}
    \end{align}
for any sufficiently large $n\in \bN$.
\end{theorem}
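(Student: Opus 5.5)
The plan is to reduce the event $\{\hsm>\kover(\nu_*)+1\}$ to a comparison of the minimized free energies of two \emph{fixed} (non-random) model indices. Then bound that comparison with one upper-tail and one lower-tail deviation inequality, each failing with probability at most $\exp(-\tC_3\cE_n^*)$; this is where the factor $2$ comes from.

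\emph{Reduction.} Write $\kover:=\kover(\nu_*)$ and let $\k^*\in\{\kopt,\kopt+1\}$ be an index with $\cE_n(\k^*)=\cE_n^*$. If $\kover+1\ge M_n$ the claim is trivial, since $\hsm\le M_n$. Otherwise, suppose $\hsm>\kover+1$. By the definition of $\hsm$, no increase of $\mvfe$ occurs at any index $\k\le \kover+1$, so $\mvfe$ is nonincreasing on $[\kover+1]$. Since $\kover\ge\kopt$, we have $\k^*\le\kopt+1\le\kover+1$, and hence $\mvfe(\kover+1)\le\mvfe(\k^*)$. Moreover, the index $\k:=\kover+1$ is $\ge\kopt$ and exceeds the supremum defining $\kover$, so $\cE_n(\kover+1)>(1+\nu_*)\cE_n^*$. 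It therefore suffices to bound $\Pstar(\mvfe(\k)\le\mvfe(\k^*))$ for a deterministic $\k$ with $\cE_n(\k)>(1+\nu_*)\cE_n^*$.

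\emph{Upper bound for $\mvfe(\k^*)$.} Since $\hQ_{n,\k^*}$ minimizes the free energy over $\cQ_{n,\k^*}\ni Q^*_{n,\k^*}$, we have
\[
\mvfe(\k^*)-\lambda\ell_n^\star\le\lambda\int\lossdiff\,\d Q^*_{n,\k^*}+\kl(Q^*_{n,\k^*},\Pi_{n,\k^*}).
\]
By Markov's inequality applied to $\exp(\xi_2\lambda\int\lossdiff\,\d Q^*)$, then Jensen and Fubini, the second inequality of \cref{assume:loss}, and \cref{assume:variational_family}, we get $\lambda\int\lossdiff\,\d Q^*\le \xi_2^{-1}(\xi_4\cE_n^*+t)$ with probability at least $1-e^{-t}$. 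Together with the KL bound this gives
\[
\mvfe(\k^*)-\lambda\ell_n^\star\le C_a\cE_n^*+t/\xi_2
\]
for an absolute constant $C_a$.

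\emph{Lower bound for $\mvfe(\k)$.} Because $\cQ_{n,\k}\subset\cP(\Theta_{n,\k})$, the Donsker--Varadhan formula (\cref{lemma:variational_formula}) gives
\[
\mvfe(\k)-\lambda\ell_n^\star\ge-\log\int\exp(-\lambda\lossdiff)\,\d\Pi_{n,\k}.
\]
By Markov's inequality, Fubini, and the first inequality of \cref{assume:loss}, the integral is at most $e^{t}\exp(-\cE_n(\k))$ with probability at least $1-e^{-t}$. Hence $\mvfe(\k)-\lambda\ell_n^\star\ge\cE_n(\k)-t>(1+\nu_*)\cE_n^*-t$.

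\emph{Choosing constants.} Take $t=c\,\cE_n^*$ with a small absolute $c$, and choose $\nu_*$ large enough that $(1+\nu_*)-c>C_a+c/\xi_2$. On the intersection of the two good events we then have $\mvfe(\k)>\mvfe(\k^*)$, which contradicts $\hsm>\kover+1$. Consequently $\Pstar(\hsm>\kover+1)\le 2e^{-c\cE_n^*}$, and we may take $\tC_3=c$.

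The main obstacle is this constant bookkeeping: the lower bound on the larger model sees $\cE_n(\k)$ with coefficient exactly $1$, whereas the upper bound on the oracle model carries the constants $\xi_2^{-1}\xi_4+\xi_4$. The argument works only because $\nu_*$ may be taken to be a large absolute constant that absorbs this mismatch. A second point to check is that the monotonicity chain really reaches $\k^*$ when $\k^*=\kopt+1$, which is exactly why the result is stated for $\kover+1$ rather than $\kover$.
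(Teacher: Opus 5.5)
Your proposal is correct and follows the paper's proof essentially step for step. The paper makes the same reduction of $\{\hsm>\kover+1\}$ to $\{\Gamma_n(\koracle)\ge\Gamma_n(\kover+1)\}$ using monotonicity of $\mvfe$ on $[\kover+1]$. It then uses the same split into an upper-tail bound at the oracle index (optimality against $Q^*_{n,\koracle}$, Markov, \cref{lemma:exp_moment_bound}) and a lower-tail bound at $\kover+1$ (Donsker--Varadhan, Markov, \cref{assume:loss}), and takes $\nu_*$ large to absorb the constant $\xi_4(1+\xi_2^{-1})$. Your $t=c\,\cE_n^*$ parametrization is just the paper's fixed threshold $\tC'_1\cE_n^*$ in another form, and your explicit treatment of the case $\kover+1\ge M_n$ and of the non-strict inequality is, if anything, slightly more careful than the paper's write-up.
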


If the excess risk curve is U-shaped, the stopping model index is equal to the  model index that minimizes the excess risk.

\begin{corollary}
Assume the same setup of \cref{prop:ushaped_risk}. Then under the same assumptions of \cref{thm:selection_vb}, we have
    \begin{align*}
        \Pstar\del[1]{\hsm=\k^\star+1 }\ge 1-\eta_n-2\exp\del{-\tC_3 \cE_n^*}
    \end{align*}
for any sufficiently large $n\in \bN$
\end{corollary}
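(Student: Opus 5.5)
The corollary follows by combining \cref{prop:ushaped_risk} with \cref{thm:selection_vb,thm:overestimation} and a union bound. The work is to show that, in the U-shaped setting, both index bounds collapse to $\k^\star+1$ for large $n$.

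\emph{Lower bound.} By \cref{prop:ushaped_risk} applied with $\tau=\tau_*$, we have $\kopt(\tau_*)=\k^\star$ for all sufficiently large $n$. Substituting into \cref{thm:selection_vb} gives
\[
\Pstar(\hsm\ge \k^\star+1)\ge 1-\eta_n .
\]

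\emph{Upper bound.} We show that $\kover(\nu_*)=\k^\star$ eventually. First, $\cE_n^*=\cE_n(\k^\star)\wedge\cE_n(\k^\star+1)$. Since $\alpha(\k^\star)<\alpha(\k^\star+1)$, the ratio $\cE_n(\k^\star+1)/\cE_n(\k^\star)\asymp n^{\alpha(\k^\star+1)-\alpha(\k^\star)}$ up to polylogarithmic factors, and this tends to infinity. Hence $\cE_n^*=\cE_n(\k^\star)$ eventually.

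Next, for every $\k>\k^\star$ we have $\alpha(\k)>\alpha(\k^\star)$. The polynomial gap dominates any polylogarithmic factor, so $\cE_n(\k)/\cE_n^*\to\infty$. Because there are only finitely many $M$ indices, $\cE_n(\k)>(1+\nu_*)\cE_n^*$ holds for all $\k>\k^\star$ once $n$ is large. The set in the definition of $\kover(\nu_*)$ is therefore $\{\k^\star\}$, which gives $\kover(\nu_*)=\k^\star$. \cref{thm:overestimation} then yields
\[
\Pstar(\hsm\le \k^\star+1)\ge 1-2\exp(-\tC_3\cE_n^*).
\]

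\emph{Combination.} Intersecting the two events and taking a union bound over their complements gives the stated bound $1-\eta_n-2\exp(-\tC_3\cE_n^*)$.

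The only step needing any care is the asymptotic comparison. It requires that the polylogarithmic factors hidden in $\cE_n(\k)\asymp n^{\alpha(\k)}$ cannot offset a strictly positive power gap, and that the number of models $M$ is fixed, so that "eventually" can be taken uniformly over all $\k$. Both are routine under the setup of \cref{prop:ushaped_risk}.
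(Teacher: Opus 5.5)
Your proposal is correct and follows the same route as the paper. The lower bound comes from \cref{prop:ushaped_risk} combined with \cref{thm:selection_vb}, the upper bound comes from \cref{thm:overestimation} after showing $\kover(\nu_*)=\k^\star$ eventually, and a union bound finishes the argument. Your check that $\cE_n(\k)/\cE_n^*\to\infty$ for every $\k>\k^\star$ is more careful than the paper's one-line argument, which only invokes the ratio for $\k^\star+1$; this matters because $\kover$ is defined as a supremum over all such indices.
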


\begin{proof}
Combining the results of \cref{prop:ushaped_risk} and \cref{thm:selection_vb}, we have $  \Pstar\del[1]{\hsm\ge \k^\star+1 }\ge1-\eta_n$. On the other hand, since $\cE_n(\k^\star+1)/\cE_n(\k^\star)\to\infty$, we have $\k^\star+1>\kover(\nu)$, i.e.,  $\k^\star+1\ge\kover(\nu)+1$ for any $\nu>0$ eventually. Combining these two results, we get the desired result.
\end{proof}

\begin{remark}
Relative to \citet{ohn2024adaptive}, the key novelty of the present paper lies in replacing full global aggregation over the entire model ladder with a local early-stopping mechanism. In  \citet{ohn2024adaptive}, variational posteriors are computed for all candidate models and then aggregated using weights determined by their variational free energies. In contrast, the present paper shows that one can traverse the ladder sequentially, stop once the energy criterion ceases to improve, and aggregate only along the visited path. Establishing that such a local stopping rule still preserves adaptive inference requires new theory, including not-too-early and not-too-late stopping guarantees shown above and new concentration arguments linking local behavior of the energy functional to a global oracle comparison. In particular,  in order to show that a local stopping rule can recover the global oracle rate, one needs new technical tools to control the concentration of the energy functional \emph{uniformly} along the model ladder. This constitutes a genuinely new theoretical ingredient that has no counterpart in the earlier papers.

In addition, ESA is not merely a device tailored to a single procedure such as generalized variational Bayes. Instead, it embodies a more general principle that also arises in other settings such as empirical Bayes and frequentist aggregation, as we show in the following sections.
\end{remark}

\section{Early-stopped aggregation for variational empirical Bayes}
\label{sec:eb}

In this section, we extend the ESA approach to the use of data-dependent priors. In many practical implementations of variational Bayes, hyperparameters governing the prior distribution are tuned by minimizing a certain objective function such as the variational free energy and the marginal likelihood. This \textit{empirical Bayes} strategy provides a computationally attractive alternative to hierarchical Bayes procedures or cross-validation that can become costly when dealing with more than one hyperparameter. Due to this probable computational efficiency, the empirical Bayes methods have been employed in various applications, such as Dirichlet mixture models \citep{mcauliffe2006nonparametric}, sparse linear regression \citep{castillo2018empirical,wang2020simple}, group variable selection \citep{ge2025variational}, and matrix factorization \citep{wang2021empirical}. Establishing ESA theory for the empirical variational Bayes setting requires substantial new technical tools. In particular, we develop new deviation arguments to control the additional randomness induced by hyperparameter optimization, and achieve this through a R\'enyi-type entropy condition on the class of candidate priors.

\subsection{Early-stopped aggregation of variational empirical posteriors}

We begin by outlining the statistical framework that incorporates empirical Bayes  methods into our variational Bayes estimation procedure. Suppose that we consider a family of prior distributions such that 
    \begin{align}
    \Pi[\Psi]_{n,\k} := \{ \Pi_{n,\k}(\cdot|\psi):\psi\in \Psi_{n,\k}\}
    \end{align}
indexed by a hyperparameter $\psi\in \Psi_{n,\k}$ for each model $\k$ and then we optimize the hyperparameter $\psi$ based on the observed sample. We consider a slightly modified variational free energy as
  \begin{align}
        \vfe(Q,\psi):=\frac{\lambda}{\barho}\int \ell_n(\theta)\d Q(\theta)+\kl(Q,\Pi_{n,\k}(\cdot|\psi)),
    \end{align}
where we adopt the learning rate parameter $\lambda/\barho$ instead of $\lambda$ as in the original formulation, solely to reusing \cref{assume:loss} without redefining the relevant constants. We specify the value of $\barho$ in \cref{assume:hyperpar} given later. The empirical Bayes procedure works in our framework as
     \begin{align}
       (\chQ_{n,\k},\cpsi_{n,\k})
       =\argmin_{(Q,\psi)\in\cQ_{n,\k}\times \Psi_{n,\k}}
       \cbr{\vfe(Q,\psi) +\Upsilon_{n,\k}(\psi)},
    \end{align}
where the hyperparameter $\psi$ is optimized as well. Here, $\Upsilon_{n,\k}:\Psi_{n,\k}\mapsto \R_{\ge0}$ quantifies a prior belief about the hyperparameter, analogous to imposing a prior distribution on $\psi.$ For model comparison, we use the maximized VFE over both the variational family and the hyperparameter space, which is given as
 \begin{align}
     \ebmvfe(\k)&:=\vfe(\chQ_{n,\k},\cpsi_{n,\k})\\
     &=\frac{\lambda}{\barho}\int \ell_n(\theta)\d \chQ_{n,\k}(\theta)+\kl(\chQ_{n,\k},\Pi_{n,\k}(\cdot|\cpsi_{n,\k}))
    \end{align}
for each model $\k\in[M_n]$. Accordingly, our stopping model index is given by
  \begin{align}
        \csm:=\inf\cbr{\k\in[M_n]\setminus\{1\}:
        \ebmvfe(\k-1)< \ebmvfe(\k)}\wedge M_n.
    \end{align}
The ESA approach with the empirical Bayes principle yields the following aggregate of the variational posteriors, which we call the \textit{ESA-EB variational posterior},
    \begin{align}
        \esaebvp
        :=\sum_{\k=1}^{\csm}\omegaebesa \chQ_{n,\k}
        \text{ with }
          \omegaebesa :=\frac{\exp(-\ebmvfe(\k))}{\sum_{\k'=1}^{\csm}\exp(-\ebmvfe(\k'))}.
    \end{align}

\subsection{Not-too-early-stopped property and oracle contraction rates}

In the empirical Bayes setting, it is natural to define the excess risk of a model $\k$ as the best possible one over the family of candidate priors, i.e., we define it as
     \begin{align}
        \ccE_n(\k):=\inf_{\psi\in \Psi_{n,\k}}\sbr{- \log\del{\int\exp\del{-\xi_1\cL_n(\theta)}\d\Pi_{n,\k}(\theta|\psi)}}.
    \end{align}
The aim of this section is to demonstrate that the ESA approach achieves a near-oracle contraction rate in terms of the  excess risk defined above.
    
A technical challenge arises because employing data-dependent hyperparameter optimization breaks the fixed-prior assumption underlying the theoretical analysis in \cref{sec:theory}, thereby preventing a straightforward use of the previously derived results. To circumvent this issue, we introduce the notion of the R\'enyi-type covering number for the set $\Pi[\Psi]_{n,\k}$. For simplicity, the $(1+\rho)$-R\'enyi divergence from $\Pi_{n,\k}(\cdot|\psi_0)$ to  $\Pi_{n,\k}(\cdot|\psi_1)$ is denoted by $\renyihp(\psi_1,\psi_0):=\cD_{1+\rho}(\Pi_{n,\k}(\cdot|\psi_1), \Pi_{n,\k}(\cdot|\psi_0))$ for $\psi_0,\psi_1\in\Psi_{n,\k}$. We say that a finite set $\Psib$ is a $(1+\rho)$-R\'enyi-type hyperparameter cover of $\Pi[\Psi]_{n,\k}$ with precision $R>0$, if it satisfies that, for any $\psi\in\Psi_{n,\k}$,  there exists $\psib\in\Psib$ such that $\renyihp(\psi,\psib)\le R$. We denote by the minimal cardinality of such a R\'enyi-type cover by $\cN\del{R_{n,\k},\Pi[\Psi]_{n,\k}, \cD_{1+\rho} }$.

\begin{assumption}
\label{assume:hyperpar}
There exists an absolute constant $\rho>0$ such that, for any sufficiently large $n\in \bN$ and any $\k\in[M_n]$, the inequality
    \begin{align}
       \log \cN\del{R_{n,\k},\Pi[\Psi]_{n,\k}, \cD_{1+\rho} }\le U_{n,\k}
    \end{align}
holds for some constants $R_{n,\k}>0$ and $U_{n,\k}>0$ with $U_{n,\k}+\rho R_{n,\k}\le (2\barho)^{-1}\ccE_n(\k)$, where $\barho=(1+\rho)/\rho>1$ is the H\"older conjugate of $1+\rho$.
\end{assumption}

\begin{example}
Consider a $k$-dimensional parametric model and the family of Gaussian prior distributions with mean zero and diagonal covariance matrix $\Pi[\Psi]=\{\N(0,\diag(\psi)):\psi\in[\psi_{\min},\psi_{\max}]^k\}$ for this model. Then we have that
    \begin{align*}
        \cD_2^{\Pi}(\psi_1,\psi_0)
        \le \frac{1}{\psi_{\min}}\|\psi_1-\psi_0\|_2^2
        \le k\frac{\psi_{\max}-\psi_{\min}}{\psi_{\min}}\|\psi_1-\psi_0\|_\infty,
    \end{align*}
where for the derivation of the first inequality, for example, see the proof of Theorem 4.2 in \cite{ning2021spike}. This implies that
    \begin{align*}
        \log \cN\del{R,\Pi[\Psi], \cD_{2} }
         &\le \log \cN\del[2]{\frac{\psi_{\min}}{k(\psi_{\max}-\psi_{\min})}R,  [\psi_{\min},\psi_{\max}]^k, \|\cdot\|_\infty}\\
         &\le k\log\del[2]{1+\frac{k(\psi_{\min}-\psi_{\max})^2}{\psi_{\min}R}}.
    \end{align*}
Hence, if $R=(\log n)^{1/2}$, the above scales as $k\log(1+k/(\log n)^{1/2})$, which tends to zero as $n\to\infty$. Since the estimation error of a $k$-dimensional parameter space is typically proportional to $k\log n$, \cref{assume:hyperpar} would be satisfied.
\end{example}

For theoretical analysis of empirical Bayes procedures, it is crucial to control
additional variability due to hyperparameter optimization. Previous studies developed several technical tools. \cite{rousseau2017asymptotic} and \citet{donnet2018posterior} introduce a notion of ``locally maximized'' likelihood function, by which the fluctuation due to the change of the hyperparameter can be controlled. In contrast, \cite{zhang2020empirical} develops a ``prior-decomposition'' condition, under which the prior distribution is an aggregate of several ``fixed'' distributions where only the aggregation weights depend on the hyperparameter. This simple structure allows an easier theoretical analysis. While we follow a similar route as  \cite{zhang2020empirical} in the sense that theoretical assumptions are placed on the prior, not the likelihood function, our condition is more general as it does not impose any structural assumption on the prior. We note that the theoretical analysis in \cite{rousseau2017asymptotic,donnet2018posterior,zhang2020empirical} is developed for the standard likelihood, which relies on more sophisticated technical machinery, and it remains unclear whether our analysis can be extended to this standard likelihood setting.

The next assumption is an adaptation of \cref{assume:variational_family} tailored to the empirical Bayes framework.

\begin{assumption}
\label{assume:eb_family}
There exists an absolute constant $\xi_5>0$ such that
       \begin{align}
        \log\del{\int \exp(\xi_3\cL_n(\theta) )\d Q_{n,\k}^*(\theta)} &\le \xi_5\ccE_n(\k)\\
        \kl(Q_{n,\k}^*, \Pi_{n,\k}(\cdot|\psi^*_{n,\k}) &\le \xi_5 \ccE_n(\k)\\
       \Upsilon_{n,\k}(\psi^*_{n,\k}) &\le \xi_5\ccE_n(\k)
    \end{align}
for some distribution $Q_{n,\k}^*\in \cQ_{n,\k}$ and hyperparameter $\psi_{n,\k}^*\in\Psi_{n,\k}$ for any $\k\in[M_n]$ and any sufficiently large $n\in \bN$.
\end{assumption}

The next theorem illustrates that the ESA-EB procedure does not terminate before reaching the near-optimal model index defined as
   \begin{align}
        \ckopt(\tau)
        :=\inf\cbr{\k\in[M_n]:\ccE_{n}(\k)\le (1+\tau)\ccE_{n}(\k+1)}\wedge M_n
    \end{align}
with high probability, when $\tau>0$ is sufficiently large.

\begin{theorem}[Not-too-early-stopping of ESA-EB]
\label{thm:selection_eb}
Under \cref{assume:loss,assume:hyperpar,assume:eb_family}, there exist absolute constants $\ctau_*>0$ and $\tC_4>0$ such that
    \begin{align}
       \Pstar\del[1]{\csm\ge \ckopt(\ctau_*) +1}\ge 1-\ceta_n
    \end{align}
for any sufficiently large $n\in \bN$, where we let $\ceta_n:=\ckopt(\ctau_*)\exp\del[1]{-\tC_4 \ccE_n(\ckopt(\ctau_*))}.$
\end{theorem}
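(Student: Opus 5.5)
The plan follows the proof of \cref{thm:selection_vb}; the only new ingredient is control of the data-dependent hyperparameter through the R\'enyi-type cover of \cref{assume:hyperpar}. Write $\ck:=\ckopt(\ctau_*)$. The event $\{\csm\le \ck\}$ is contained in the union over $\k\in[\ck]\setminus\{1\}$ of the events $\{\ebmvfe(\k-1)<\ebmvfe(\k)\}$. So it suffices to show that for every $\k\le\ck$,
\[
\Pstar\bigl(\ebmvfe(\k-1)<\ebmvfe(\k)\bigr)\le \exp\bigl(-\tC_4\ccE_n(\ck)\bigr),
\]
and then take a union bound, which gives the factor $\ck$ in $\ceta_n$. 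By definition of $\ckopt$, for $\k\le\ck$ we have $\ccE_n(\k-1)>(1+\ctau_*)\ccE_n(\k)$, and $\ccE_n(\k)\ge\ccE_n(\ck)$ along this decreasing stretch (the decrease is geometric). It is therefore enough to prove an upper deviation bound for $\ebmvfe(\k)$ and a lower deviation bound for $\ebmvfe(\k-1)$, each measured against the centring $\lambda\ell_n^\star/\barho$. We will show that with high probability
\[
\ebmvfe(\k)-\tfrac{\lambda}{\barho}\ell_n^\star\le C_{\rm up}\,\ccE_n(\k),\qquad
\ebmvfe(\k-1)-\tfrac{\lambda}{\barho}\ell_n^\star\ge c_{\rm low}\,\ccE_n(\k-1).
\]
Choosing $\ctau_*$ with $c_{\rm low}(1+\ctau_*)>C_{\rm up}$ then contradicts $\ebmvfe(\k-1)<\ebmvfe(\k)$.

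\textbf{Upper bound.} Plug the oracle pair $(Q^*_{n,\k},\psi^*_{n,\k})$ from \cref{assume:eb_family} into the objective. This gives
\[
\ebmvfe(\k)\le \vfe(Q^*_{n,\k},\psi^*_{n,\k})+\Upsilon_{n,\k}(\psi^*_{n,\k}),
\]
so $\ebmvfe(\k)-\lambda\ell_n^\star/\barho$ is at most $(\lambda/\barho)\int\lossdiff\,\d Q^*_{n,\k}+2\xi_5\ccE_n(\k)$. Apply Markov's inequality to $\exp\bigl(\xi_2\lambda\int\lossdiff\,\d Q^*\bigr)$, using Jensen's inequality, Fubini, the second part of \cref{assume:loss} and the first bound of \cref{assume:eb_family}. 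This yields an exponential tail of order $e^{-c\,\ccE_n(\k)}$ for this term exceeding a multiple of $\ccE_n(\k)$. (If needed, $\barho\ge1$ lets us take $\xi_2$ small relative to $1/\barho$.)

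\textbf{Lower bound.} This is the main obstacle. Fix any $(Q,\psi)$. By Donsker--Varadhan (\cref{lemma:variational_formula}),
\[
\tfrac{\lambda}{\barho}\int\lossdiff\,\d Q+\kl(Q,\Pi(\cdot|\psi))\ge -\log\int e^{-\lambda\lossdiff/\barho}\,\d\Pi(\cdot|\psi),
\]
and dropping $\Upsilon\ge0$ only weakens the bound. Since $\psi=\cpsi$ is random, we discretize: pick $\psib$ in a minimal R\'enyi cover with $\cD_{1+\rho}(\psi,\psib)\le R_{n,\k-1}$. H\"older's inequality with exponents $(1+\rho,\barho)$ gives
\[
\int e^{-\lambda\lossdiff/\barho}\,\d\Pi(\cdot|\psi)\le \exp\bigl(\tfrac{\rho}{1+\rho}R\bigr)\Bigl(\int e^{-\lambda\lossdiff}\,\d\Pi(\cdot|\psib)\Bigr)^{1/\barho}.
\]
For each fixed $\psib$, Markov's inequality and the first part of \cref{assume:loss} give $\Pstar\int e^{-\lambda\lossdiff}\,\d\Pi(\cdot|\psib)\le e^{-\ccE_n(\k-1)}$, the minimum over $\psi$ in the definition of $\ccE_n$ being taken care of. A union bound over the at most $e^{U}$ cover points, at threshold $e^{-\ccE_n(\k-1)/2}$, then shows that with probability at least $1-e^{U-\ccE_n(\k-1)/2}$ we have
\[
\ebmvfe(\k-1)-\tfrac{\lambda}{\barho}\ell_n^\star\ge \tfrac{1}{2\barho}\ccE_n(\k-1)-\tfrac{\rho}{1+\rho}R-\text{slack}.
\]
\cref{assume:hyperpar} ($U+\rho R\le(2\barho)^{-1}\ccE_n$) is exactly what keeps this lower bound a positive multiple of $\ccE_n(\k-1)$. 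It also keeps the failure probability of order $e^{-c\,\ccE_n(\k-1)}$, although some constant bookkeeping (e.g.\ halving thresholds) will likely be needed.

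\textbf{Conclusion.} Combine the two bounds using $\ccE_n(\k-1)>(1+\ctau_*)\ccE_n(\k)\ge\ccE_n(\ck)$ and take the union over $\k\le\ck$. This gives the probability bound $\ck\exp(-\tC_4\ccE_n(\ck))=\ceta_n$, as claimed.
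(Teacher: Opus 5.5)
Your proposal is correct and follows essentially the same route as the paper. Both use the same two-event split: the oracle pair plus Markov with the exponential moment to control $\cGamma_n(\k)$ from above, and Donsker--Varadhan plus H\"older against a R\'enyi cover to control $\cGamma_n(\k-1)$ from below, followed by a union bound over $\k\le\ckopt(\ctau_*)$. The only difference is the order of steps: you union-bound over the cover points in probability and then apply H\"older deterministically, whereas the paper packages cover, H\"older and Jensen into the expectation bound of \cref{lemma:eb_lossdiff_bound} and then applies Markov once. With your exact factor $\exp(R_{n,\k-1}/\barho)$, the margin is $\ccE_n(\k-1)/(2\barho^2)$ and the failure probability is at most $\exp(-\ccE_n(\k-1)/(2(1+\rho)))$, so the extra constant bookkeeping you anticipated is not needed.
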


Based on the above theorem, we can derive the oracle contraction rate
  \begin{align}
        \ccE_n^*:=\min\cbr{\ccE_n(\k):\k\in[\ckopt(\ctau_*)+1]}
        =\ccE_n(\ckopt(\ctau_*))\wedge \ccE_n(\ckopt(\ctau_*)+1)
    \end{align}
of the ESA-EB variational posterior.

\begin{theorem}[Contraction rate of ESA-EB variational posterior]
\label{thm:contract_eb}
Assume that $\ceta_n\to0$ as $n\to\infty$. Then under the same assumption of \cref{thm:selection_eb},
    \begin{align}
       \Pstar\sbr{  \esavp\del{ \cL_n(\theta)\ge A_n( \ccE_n^*+\log M_n)}}\to0
    \end{align}
as $n\to\infty$ for any diverging sequence $(A_n)_{n\in\bN}$ with $A_n\to\infty$.
\end{theorem}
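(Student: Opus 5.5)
We reduce \cref{thm:contract_eb} to a PAC-Bayes bound for the ESA-EB variational posterior, in the spirit of the proofs of \cref{thm:contract_posterior,thm:contract_esva}. The one extra ingredient is a union bound over a finite R\'enyi-type hyperparameter cover, which removes the data dependence of the priors. Write $\ckopt:=\ckopt(\ctau_*)$ and let $k^\dagger\in[\ckopt+1]$ attain $\ccE_n^*$. By \cref{thm:selection_eb}, on an event of probability at least $1-\ceta_n\to1$ we have $\csm\ge \ckopt+1\ge k^\dagger$, so $k^\dagger$ is among the aggregated models. It therefore suffices to bound $\Pstar[\esaebvp(\cL_n(\theta)\ge A_n(\ccE_n^*+\log M_n))\ind\{\csm\ge\ckopt+1\}]$.

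\textbf{Step 1 (upper bound on the aggregated free energy).} By the Donsker--Varadhan formula (\cref{lemma:variational_formula}) applied to the mixture over models with weights $\omegaebesa$,
\begin{align*}
\frac{\lambda}{\barho}\int\ell_n\,\d\esaebvp+\kl\Bigl(\esaebvp,\ \textstyle\sum_{\k\le\csm}\csm^{-1}\Pi_{n,\k}(\cdot|\cpsi_{n,\k})\Bigr)\le \min_{\k\le\csm}\ebmvfe(\k)+\log\csm .
\end{align*}
On the good event, the right-hand side is at most $\ebmvfe(k^\dagger)+\log M_n$. Next, $\ebmvfe(k^\dagger)\le \vfe(Q^*_{n,k^\dagger},\psi^*_{n,k^\dagger})+\Upsilon_{n,k^\dagger}(\psi^*_{n,k^\dagger})$, where the $\Upsilon$ term of the minimiser is nonnegative and is simply dropped. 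We subtract $\lambda\ell_n^\star/\barho$ from both sides. The centred loss term $\frac{\lambda}{\barho}\int\lossdiff\,\d Q^*$ has exponential moment controlled by \cref{assume:loss} (second inequality, with $\barho^{-1}\le \xi_2$ after possibly shrinking constants, via Jensen) together with the first line of \cref{assume:eb_family}. Markov's inequality then gives, with probability $1-e^{-c\ccE_n^*}$, the bound $\ebmvfe(k^\dagger)-\lambda\ell_n^\star/\barho\lesssim \ccE_n(k^\dagger)=\ccE_n^*$.

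\textbf{Step 2 (lower bound via the cover).} For each $\k$, fix a minimal $(1+\rho)$-R\'enyi cover $\Psib_\k$ as in \cref{assume:hyperpar}, and pick $\psib_\k\in\Psib_\k$ with $\renyihp(\cpsi_{n,\k},\psib_\k)\le R_{n,\k}$. The key change-of-measure inequality is a H\"older step. For any measurable $g\ge0$,
\begin{align*}
\int g\,\d\Pi(\cdot|\cpsi)\le \Bigl(\int g^{\barho}\d\Pi(\cdot|\psib)\Bigr)^{1/\barho}\exp\bigl(\rho\,\renyihp(\cpsi,\psib)/(1+\rho)\bigr),
\end{align*}
which yields $-\log\int e^{-\frac{\lambda}{\barho}\lossdiff}\d\Pi(\cdot|\cpsi)\ge -\frac1{\barho}\log\int e^{-\lambda\lossdiff}\d\Pi(\cdot|\psib)-\rho R_{n,\k}/\barho$. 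For the fixed prior $\Pi(\cdot|\psib)$, Fubini and \cref{assume:loss} give $\Pstar\int e^{-\lambda\lossdiff+\xi_1\cL_n}\d\Pi(\cdot|\psib)\le1$. We take a union bound over $\psib\in\Psib_\k$ (cost $U_{n,\k}$) and over $\k\le M_n$ (cost $\log M_n$). With probability $1-e^{-t}$, simultaneously for all $\k$ we then obtain, via Donsker--Varadhan applied with $\xi_1\cL_n/\barho$,
\begin{align*}
\frac{\xi_1}{\barho}\int \cL_n\,\d\esaebvp\lesssim \frac{\lambda}{\barho}\int\lossdiff\,\d\esaebvp+\kl(\cdot,\cdot)+\max_{\k\le\csm}\{U_{n,\k}+\rho R_{n,\k}\}+\log M_n+t .
\end{align*}

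\textbf{Step 3 (main obstacle, and conclusion).} The main obstacle is the remainder $\max_{\k\le\csm}(U_{n,\k}+\rho R_{n,\k})$. \cref{assume:hyperpar} only bounds it by $(2\barho)^{-1}\ccE_n(\k)$, and $\ccE_n(\k)$ can be large for both small and large $\k$. For models with large $\ccE_n(\k)$ I would not take a crude maximum. Instead, I would keep the per-model bound inside the exponential weights: the factor $1/2$ in \cref{assume:hyperpar} allows absorbing $U_{n,\k}+\rho R_{n,\k}$ into half of the model-wise lower bound $\ccE_n(\k)/\barho$. This shows that models with $\ccE_n(\k)\gg\ccE_n^*$ receive exponentially small weight $\omegaebesa$, by comparing $\ebmvfe(\k)$ from below with $\ebmvfe(k^\dagger)$ from above as in Step 1. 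The contributions of the remaining models are of order $\ccE_n^*$. Combining Steps 1--2 on the intersection of the events then gives $\int\cL_n\,\d\esaebvp\lesssim \ccE_n^*+\log M_n$ with probability tending to one. Markov's inequality with $A_n\to\infty$, together with $\ceta_n\to0$, concludes the proof.
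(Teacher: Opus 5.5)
Your argument is correct in outline, but it takes a genuinely different route from the paper.

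\textbf{The paper's route.} The paper never bounds $\int\cL_n\,\d\esaebvp$. It introduces the exact ESA-EB Gibbs posterior $\esaebpst$, built on the aggregated empirical prior $\csm^{-1}\sum_{\k\le\csm}\Pi_{n,\k}(\cdot|\cpsi_{n,\k})$, and applies $Q(B)\le T_n^{-1}\{\kl(Q,\Pi)+\e^{T_n}\Pi(B)\}$ with $T_n=\sqrt{A_n}(\ccE_n^*+\log M_n)$. It bounds $\Pstar[\kl(\esaebvp,\esaebpst)\ind(\csm\ge\ckopt+1)]$ in expectation, using your Step~1 plus a log-normalizer term handled by Jensen and \cref{lemma:eb_lossdiff_bound}. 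It bounds $\esaebpst(\Xi_n)$ through a normalizer event $\fA_n$ and a model-by-model numerator bound. The obstacle you flag is resolved there by splitting the models into two groups:
\begin{itemize}
\item $\cK_n=\{\k:\ccE_n(\k)\ge\xi_1A_n(\ccE_n^*+\log M_n)\}$, where the whole empirical-prior mass is already small by \cref{lemma:eb_lossdiff_bound};
\item its complement, where the cover cost $U_{n,\k}+\rho R_{n,\k}\le(2\barho)^{-1}\ccE_n(\k)$ is at most half of the decay $\barho^{-1}\xi_1A_n(\ccE_n^*+\log M_n)$ gained by integrating only over $\Xi_n$.
\end{itemize}

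\textbf{Your route.} You instead prove a PAC-Bayes oracle bound for the posterior mean and pay the cover cost through the exponential weights. This closes, provided you work model by model with $\int\cL_n\,\d\esaebvp=\sum_{\k\le\csm}\omegaebesa\int\cL_n\,\d\chQ_{n,\k}$ rather than with the aggregated prior of Step~2. On an event of probability at least $1-2\e^{-t}$:
\begin{itemize}
\item each per-model remainder is $\barho^{-1}(U_{n,\k}+R_{n,\k}+\log M_n+t)$;
\item \cref{lemma:eb_lossdiff_bound}, Markov and a union bound over $\k\in[M_n]$ give $\ccE_n(\k)\le 2\barho\{\cGamma_n(\k)+\log M_n+t\}$ for all $\k$ simultaneously.
\end{itemize}
Hence $\frac{\xi_1}{\barho}\int\cL_n\,\d\chQ_{n,\k}\le C\{\cGamma_n(\k)+\log M_n+t\}+C(\log M_n+t)$, with the braced term nonnegative. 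The Gibbs inequality $\sum_{\k\le\csm}\omegaebesa\cGamma_n(\k)\le\cGamma_n(k^\dagger)+\log M_n$, valid for real-valued $\cGamma_n$, then finishes the argument with no threshold split. If you do split, the threshold must be $\ccE_n(\k)\gg\ccE_n^*+\log M_n+t$, not merely $\ccE_n(\k)\gg\ccE_n^*$.

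\textbf{What each buys.} Your route gives a stronger, high-probability bound on $\int\cL_n\,\d\esaebvp$, an empirical Bayes analogue of \cref{thm:oracle_vbmean}. It never needs the exact posterior or the variational-error term. The paper's route parallels \cref{thm:contract_posterior,thm:contract_esva} and yields contraction of $\esaebpst$ itself as a by-product.

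\textbf{Minor points.}
\begin{itemize}
\item The mixture inequality in Step~1 is the KL-convexity bound behind \cref{lemma:composite_elbo_max}, not Donsker--Varadhan.
\item The H\"older factor is $\exp(\rho R_{n,\k}/(1+\rho))=\exp(R_{n,\k}/\barho)$, not $\exp(\rho R_{n,\k}/\barho)$.
\item No constant needs shrinking in Step~1. Markov with exponent $\xi_2\barho$ turns $\frac{\lambda}{\barho}\int\lossdiff\,\d Q^*$ into $\xi_2\lambda\int\lossdiff\,\d Q^*$, whose exponential moment is controlled as in \cref{lemma:exp_moment_bound} with \cref{assume:eb_family} in place of \cref{assume:variational_family}.
\item The probability $1-\e^{-c\ccE_n^*}$ in Step~1 tends to one because $\ceta_n\to0$ forces $\ccE_n(\ckopt)\to\infty$. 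By the definition of $\ckopt$, this gives $\ccE_n^*\to\infty$.
\item The final Markov step uses $\cL_n\ge0$, which is implicit in the setting.
\end{itemize}
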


\section{Frequentist aggregation with early stopping}
\label{sec:erm}

In this section, we describe the ESA approach in the frequentist aggregation framework. We first consider ESA with an explicit penalty function, and then discuss two alternative methods based on sample-splitting and variational Bayes, respectively.

\subsection{Early-stopped aggregation of empirical risk minimizers}

For each $\k\in[M_n]$, we consider the empirical risk minimizer (ERM) over the model $\Theta_{n,\k}$
    \begin{align}
        \htheta_{n,\k}
        =\argmin_{\theta\in \Theta_{n,\k}}\ell_n(\theta)
    \end{align}
For comparing the performances of candidate models, we consider a penalized empirical risk 
    \begin{align}
        \mPER(\k):=\lambda\ell_n(  \htheta_{n,\k})+ H_{n,\k}
    \end{align}
with penalty term $H_{n,\k}>0$ imposed on the model $\Theta_{n,\k}$ to prevent overfitting. We aggregate the ERMs up to a data-driven model index $\usm$ given by
\begin{align}
        \usm=\inf\cbr{\k\in[M_n]\setminus\{1\}:
        \mPER(\k-1)<  \mPER(\k)}\wedge M_n.
    \end{align}
In other words, we stop the procedure until the penalized empirical risk increases. Then, building upon the model selection result with early stopping, we propose to use the ESA-ERM estimator defined as 
    \begin{align}
        \esaerm:=\sum_{\k=1}^{\usm}\omegafesa\htheta_{n,\k}
        \text{ with }
         \omegafesa
        :=\frac{\exp(-\mPER(\k))}{\sum_{\k'=1}^{\usm}\exp(-\mPER(\k'))}.
    \end{align}
The ESA-ERM estimator coincides with an exponentially weighted aggregate (EWA) of point estimators discussed in \cref{subsec:ewa}, with the crucial distinction that the aggregation is performed only up to a data-driven early stopping index.

\subsection{Not-too-early-stopped property and oracle inequality}

The aim of this section is to show that the proposed ESA-ERM estimator attains the almost minimal convergence rate of the excess risk. We make several assumptions for this purpose. 

\begin{assumption}
\label{assume:penalty}
There exists a metric  $\fh:\Theta_n\times \Theta_n\mapsto \R_{\ge0}$ and absolute constants $\kappa\ge1$ and $\xi_6>0$ such that
    \begin{align}
    \label{eq:assume_lip}
      \frac{1}{n} |\ell_n(\theta_1)-\ell_n(\theta_2)|\le \xi_6\fh^\kappa(\theta_1,\theta_2)
    \end{align}
almost surely for any $\theta_1,\theta_2\in \Theta_n$ and any sufficiently large $n\in \bN$. There exists an absolute constant $\xi_7\in(0,\xi_1/(2\xi_6))$  such that the penalty $H_{n,\sk}$ satisfies
        \begin{align}
         \label{eq:assume_entropy}
    \sup_{H>H_{n,\k}}\log \cN((\xi_7 n^{-1}H)^{1/\kappa}, \{\theta\in\Theta_{n,\k}: \cL_n(\theta)\le2 H\},\fh) &\le\frac{1}{4} H_{n,\k}
    \end{align}
 for any $\k\in[M_n]$ and any sufficiently large $n\in \bN$. 
\end{assumption}

The Lipschitzness assumption \eqref{eq:assume_lip} is satisfied by many practically used loss functions, including the cross-entropy, hinge, quantile and Huber losses; for more detailed illustrations, we refer to \citet{alquier2019estimation}. While the square loss function, most frequently used for regression, is not Lipschitz for unbounded outcomes, we can still apply our technical machinery to the square loss by using a truncation argument used in, for instance, \citet{bauer2019deep,ohn2022nonconvex,jiao2023deep,syring2023gibbs}.

The condition \eqref{eq:assume_entropy} is a version of a local complexity bound that has been a standard assumption in both frequentist and Bayesian theory, tracing back to the seminal contributions of Le Cam \citep{lecam1973convergence,lecam1975local}. 

\begin{example}
Consider a $k$-dimensional parametric model $\Theta(k, B):=\{\theta\in\R^k:\|\theta\|_p\le B\}$, with the $L_p$ boundedness. Then it is well known that
    \begin{align*}
      \log \cN(\zeta,\Theta(k, B),\|\cdot\|_p)\le k\log (3B/\zeta)
    \end{align*}
for any $\zeta>0$ \citep[e.g., Proposition C.2 of][]{ghosal2017fundamentals}. Therefore, if \eqref{eq:assume_lip} is satisfied with the $L_p$ distance metric, \eqref{eq:assume_entropy} is satisfied if we set the penalty term as $4k \log(3Bn/k)$. This suggests that we can use a penalty proportional to the parameter dimension of a candidate model for ESA.
\end{example}

We define the (frequentist) excess risk of a model $\k$ as
    \begin{align}
    \label{eq:excess_risk_erm}
        \ucE_n(\k):=\xi_1\min_{\theta\in\Theta_{n,\k}}\cL_n(\theta)+H_{n,\k},
    \end{align}
which, as will be demonstrated in the next proposition, becomes as an upper bound of the excess risk of the ERM estimator $\htheta_{n,\k}$. The proof of the proposition is quite standard, but we provide it for the sake of completeness.

\begin{proposition}
\label{prop:risk_bound_erm}
Under \cref{assume:loss,assume:penalty}, there exists an absolute constant $\tC_5>0$ such that
        \begin{align*}
            \Pstar\del{\cL_n(\htheta_{n,\k})\le \tC_5 \{\ucE_n(\k) +\log n\}}\ge 1-\frac{1}{n}
        \end{align*}
for any $\k\in[M_n]$ and any sufficiently large $n\in \bN$.
\end{proposition}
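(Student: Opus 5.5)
We propose a standard peeling-plus-union-bound argument built on the exponential moment bound \eqref{eq:assume_learnability}. Fix $\k$ and let $\theta_\k^\dagger\in\argmin_{\theta\in\Theta_{n,\k}}\cL_n(\theta)$. Since $\htheta_{n,\k}$ minimizes $\ell_n$ over $\Theta_{n,\k}$, we have $\lossdiff(\htheta_{n,\k})\le\lossdiff(\theta^\dagger_\k)$. The proof then reduces to two facts, each holding with high probability:
\begin{itemize}
\item[(a)] every $\theta$ with large $\cL_n(\theta)$ has $\lambda\lossdiff(\theta)$ bounded below by a constant multiple of $\cL_n(\theta)$;
\item[(b)] $\lambda\lossdiff(\theta^\dagger_\k)$ is bounded above by a multiple of $\cL_n(\theta^\dagger_\k)+\log n$.
\end{itemize}

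Part (b) follows from the second inequality of \cref{assume:loss} and Markov's inequality. The Chernoff bound gives
\begin{align*}
\Pstar\bigl(\lambda\xi_2\lossdiff(\theta^\dagger_\k)\ge \xi_3\cL_n(\theta^\dagger_\k)+\log(2n)\bigr)\le \frac{1}{2n},
\end{align*}
so with probability at least $1-1/(2n)$ we have $\lambda\lossdiff(\theta^\dagger_\k)\lesssim \min_{\Theta_{n,\k}}\cL_n+\log n\lesssim \ucE_n(\k)+\log n$.

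For (a), we peel the set $\{\theta\in\Theta_{n,\k}:\cL_n(\theta)>H\}$ into shells $S_j:=\{2^{j-1}H<\cL_n(\theta)\le 2^jH\}$, $j\ge1$, with base level $H\ge H_{n,\k}\vee C\log n$ to be chosen.
\begin{itemize}
\item On each shell, take a $(\xi_7n^{-1}2^{j-1}H)^{1/\kappa}$-cover of $\{\cL_n\le 2\cdot 2^{j-1}H\}$ under $\fh$. By \eqref{eq:assume_entropy}, since $2^{j-1}H>H_{n,\k}$ (taking $H$ slightly larger than $H_{n,\k}$), its log-cardinality is at most $H_{n,\k}/4$.
\item For each cover center $\thetab$ in the shell, the first inequality of \cref{assume:loss} with Markov's inequality gives $\Pstar(\lambda\lossdiff(\thetab)\le \xi_1\cL_n(\thetab)/2)\le\exp(-\xi_1\cL_n(\thetab)/2)\le \exp(-\xi_1 2^{j-2}H)$.
\item The Lipschitz condition \eqref{eq:assume_lip} transfers this to every $\theta$ within distance $(\xi_7n^{-1}2^{j-1}H)^{1/\kappa}$ of a center, at a cost $|\ell_n(\theta)-\ell_n(\thetab)|\le \xi_6\xi_7 2^{j-1}H$.
\item Because $\xi_7<\xi_1/(2\xi_6)$, this cost is a strict fraction of $\xi_1 2^{j-1}H$. (If $\lambda\neq1$, we absorb $\lambda$ into constants, treating it as fixed.) Hence on the good event, $\lambda\lossdiff(\theta)\ge c\,\cL_n(\theta)$ for all $\theta\in S_j$, with an absolute constant $c>0$.
\end{itemize}
A union bound over centers and shells gives failure probability at most
\begin{align*}
\sum_{j\ge1}\exp\bigl(H_{n,\k}/4-\xi_1 2^{j-2}H\bigr).
\end{align*}
Choosing $H=C'(H_{n,\k}+\log n)$ with $C'$ large makes this at most $1/(2n)$.

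To conclude, on the intersection of both events suppose $\cL_n(\htheta_{n,\k})>H$. Then
\begin{align*}
c\,\cL_n(\htheta_{n,\k})\le\lambda\lossdiff(\htheta_{n,\k})\le\lambda\lossdiff(\theta^\dagger_\k)\lesssim \ucE_n(\k)+\log n.
\end{align*}
Either way, $\cL_n(\htheta_{n,\k})\le \tC_5\{\ucE_n(\k)+\log n\}$ with probability at least $1-1/n$.

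The main obstacle is the chaining step in (a): matching the cover radius to the shell level so that the Lipschitz transfer error stays below the margin from \eqref{eq:assume_learnability}. This is exactly where the constraint $\xi_7<\xi_1/(2\xi_6)$ and the factor $2H$ in \eqref{eq:assume_entropy} are used, with the sum over shells handled geometrically. Some care is also needed with the exact role of $\lambda$ in the scaling of the Lipschitz bound.
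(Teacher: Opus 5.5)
Your proposal is correct and follows essentially the same route as the paper: dyadic peeling of the model, covering each shell at the radius prescribed by the entropy condition, a Lipschitz transfer whose cost is absorbed by the margin from $\xi_7<\xi_1/(2\xi_6)$, and a union bound driven by the first exponential-moment condition; this is then combined with the basic inequality $\lossdiff(\htheta_{n,\k})\le\lossdiff(\theta^\dagger_\k)$ and a Chernoff bound on $\lossdiff(\theta^\dagger_\k)$ from the second condition. The difference is cosmetic: you peel $\cL_n$ above a base level $H\asymp H_{n,\k}+\log n$ and obtain a ratio bound $\lambda\lossdiff\ge c\,\cL_n$ there, whereas the paper peels $\xi_1\cL_n+H_{n,\k}$ at dyadic multiples of $H_{n,\k}$ to get a tail bound on $\sup_{\theta\in\Theta_{n,\k}}\{c\,\cL_n(\theta)-\lambda\lossdiff(\theta)\}$, which it reuses in the oracle inequality; your base level also guarantees that the entropy condition's requirement (level $>H_{n,\k}$) holds in every shell.
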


In accordance with the excess risk given in \eqref{eq:excess_risk_erm}, we define the near-optimal model as
  \begin{align}
        \ukopt(\tau)
        :=\inf\cbr{\k\in[M_n]:\ucE_{n}(\k)\le (1+\tau)\ucE_{n}(\k+1)}\wedge M_n
    \end{align}
for $\tau>0$. In the following theorem, we show that the ESA procedure does not terminate before reaching the near-optimal model index with high probability.

\begin{theorem}[Not-too-early-stopping of ESA-ERM]
\label{thm:selection_erm}
Under \cref{assume:loss,assume:penalty}, there exist absolute constants $\utau_*>0$ and $\tC_6>0$ such that
        \begin{align}
       \Pstar\del[1]{\usm\ge \ukopt(\utau_*)+1 }\ge 1-\ueta_n
    \end{align}
for any sufficiently large $n\in \bN$, where we let $\ueta_n:=\ukopt(\utau_*)\exp\del[1]{-\tC_6\ucE_n(\ukopt(\utau_*))}.$
\end{theorem}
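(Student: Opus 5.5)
Write $\sk:=\ukopt(\utau_*)$. The event $\{\usm\le \sk\}$ is contained in the union over $\k\in\{2,\dots,\sk\}$ of the events $\{\mPER(\k-1)<\mPER(\k)\}$. For such $\k$ we have $\k-1<\sk$, so by definition of $\ukopt$,
\[
\ucE_n(\k-1)>(1+\utau_*)\ucE_n(\k),
\]
and iterating, $\ucE_n(\k)\ge \ucE_n(\sk)$ for every $\k\le\sk$. A union bound therefore reduces the theorem to a single-step estimate: there are absolute constants $\utau_*$ and $c>0$ such that, whenever $\ucE_n(\k-1)>(1+\utau_*)\ucE_n(\k)$,
\[
\Pstar\del{\mPER(\k-1)<\mPER(\k)}\le \exp(-c\,\ucE_n(\k)).
\]
Summing over at most $\sk$ indices and using $\ucE_n(\k)\ge\ucE_n(\sk)$ then gives the bound $\ueta_n$ with $\tC_6=c$.

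For the single-step estimate, the event $\mPER(\k-1)<\mPER(\k)$ is
\[
\lambda\{\lossdiff(\htheta_{n,\k-1})-\lossdiff(\htheta_{n,\k})\}< H_{n,\k}-H_{n,\k-1},
\]
where $\lossdiff=\ell_n-\ell_n^\star$. I split this into two deviation events.

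\emph{Upper bound for model $\k$.} Take $\theta_\k^\circ\in\argmin_{\Theta_{n,\k}}\cL_n$. Then $\lossdiff(\htheta_{n,\k})\le\lossdiff(\theta_\k^\circ)$. By the second inequality of \cref{assume:loss} and Markov's inequality,
\[
\lambda\lossdiff(\theta^\circ_\k)\le \xi_2^{-1}\{\xi_3\cL_n(\theta_\k^\circ)+t\}
\]
with probability at least $1-e^{-t}$. Taking $t\asymp\ucE_n(\k)$, this is at most $C\,\ucE_n(\k)$. So $\mPER(\k)\le C'\ucE_n(\k)$ except on an event of probability $e^{-c\ucE_n(\k)}$.

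\emph{Uniform lower bound for model $\k-1$.} I need $\lambda\lossdiff(\theta)\ge c_1\xi_1\cL_n(\theta)-c_2H_{n,\k-1}-t$ uniformly over $\theta\in\Theta_{n,\k-1}$. This is a peeling argument over the shells $\{\cL_n\le 2H\}$ for $H=2^jH_{n,\k-1}$, $j\ge0$.
\begin{itemize}
\item On each shell, take a $(\xi_7n^{-1}H)^{1/\kappa}$-net, whose log-cardinality is at most $H_{n,\k-1}/4$ by \eqref{eq:assume_entropy}.
\item At each net point, the first inequality of \cref{assume:loss} with Chernoff gives $\lambda\lossdiff(\bar\theta)\ge\xi_1\cL_n(\bar\theta)-s$ with probability $1-e^{-s}$.
\item The Lipschitz condition \eqref{eq:assume_lip} transfers this to the whole shell at cost $n\xi_6\cdot\xi_7n^{-1}H\le \xi_1H/2$, using $\xi_7<\xi_1/(2\xi_6)$. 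I plan to absorb $\lambda$ into the constants; the argument implicitly needs $\lambda\lesssim1$.
\item Choose $s$ proportional to $H+\ucE_n(\k)$ on each shell. The entropy term $H_{n,\k-1}/4\le H/4$ is then dominated, and the geometric sum over $j$ converges.
\end{itemize}
On the resulting event,
\[
\mPER(\k-1)\ge c_1\xi_1\cL_n(\htheta_{n,\k-1})+H_{n,\k-1}(1-c_2)-C''\ucE_n(\k)\ge c_3\,\ucE_n(\k-1)-C''\ucE_n(\k),
\]
where the last step uses $\cL_n(\htheta_{n,\k-1})\ge\min_{\Theta_{n,\k-1}}\cL_n$. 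I need $c_2<1$, which I get by keeping the deviation terms proportional to a small multiple of $H$.

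\emph{Conclusion of the single step.} If $(1+\utau_*)c_3>C'+C''$, the two bounds contradict $\mPER(\k-1)<\mPER(\k)$. So this is how I choose $\utau_*$, and the failure probability is at most $2e^{-c\ucE_n(\k)}$.

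The main obstacle is the uniform lower bound for $\htheta_{n,\k-1}$. The ERM is data-dependent, so the pointwise Chernoff bound does not apply directly. The constants must be arranged so that the coefficient of $H_{n,\k-1}$ lost to the deviation stays strictly below one, and so that the peeling tail is controlled by $\ucE_n(\k)$ rather than by $\log n$. The latter is what distinguishes this result from \cref{prop:risk_bound_erm}. I would first try to reuse the peeling computation from the proof of \cref{prop:risk_bound_erm}, replacing $\log n$ by $\ucE_n(\k)$ in the choice of deviation level.
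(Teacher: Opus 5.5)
The gap is in the one place where you depart from the paper: your claim that you can obtain $c_2<1$ by keeping the deviation terms small.

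Everything else follows the paper's architecture. That includes the union bound over $\k\le\ukopt$, the upper deviation of $\uGamma_n(\k)=\lambda\lossdiff(\htheta_{n,\k})+H_{n,\k}$ via the oracle, Markov and the second exponential moment, and the lower deviation of $\uGamma_n(\k-1)$ via peeling, nets, Lipschitz transfer and the first exponential moment. The paper packages your net-plus-Chernoff union bound as Donsker--Varadhan with a uniform prior on the net and a Dirac at the net point nearest $\htheta_{n,\k-1}$.

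\textbf{Why $c_2<1$ fails.} On the innermost shell $\{\cL_n\le 2H_{n,\k-1}\}$, what you lose against $H_{n,\k-1}$ is not a deviation level you can shrink.
\begin{itemize}
\item Your union bound must pay the log-cardinality of the net, which \eqref{eq:assume_entropy} only bounds by $H_{n,\k-1}/4$.
\item The radius is pinned at $(\xi_7n^{-1}H)^{1/\kappa}$. So the Lipschitz transfer costs $\lambda\xi_6\xi_7H_{n,\k-1}$ for the loss.
\item It costs a further $\xi_1\xi_6\xi_7H_{n,\k-1}$ to move $\cL_n$ from the net point to $\theta$, which your final display in terms of $\cL_n(\htheta_{n,\k-1})$ needs.
\end{itemize}
Only $\xi_6\xi_7<\xi_1/2$ is assumed, so $c_2$ can be as large as roughly $\tfrac14+\tfrac12(\lambda+\xi_1)\xi_1$. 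This exceeds $1$ for $\lambda=\xi_1$ near $1$ and $\xi_6\xi_7$ near $\xi_1/2$. Hence $\uGamma_n(\k-1)\ge c_3\ucE_n(\k-1)-C''\ucE_n(\k)$ is not established whenever $H_{n,\k-1}$ carries a non-negligible share of $\ucE_n(\k-1)$.

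A related problem: taking $s\propto H$ on each shell to beat ``$H_{n,\k-1}/4\le H/4$'' forces you to absorb $H/4$ into $\xi_1\cL_n(\theta)\asymp\xi_1H$ on outer shells. That fails for $\xi_1<1/4$. The entropy bound does not grow with $H$, so you should pay a flat $H_{n,\k-1}/4$ plus $\epsilon H$ for summability. That again puts at least $1/4$ into $c_2$.

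\textbf{The fix.} No fraction of $H_{n,\k-1}$ needs to survive. The paper spends the penalty inside $\uGamma_n(\k-1)$ on the entropy. It then bounds the leftover by $H_{n,\k-1}\le H_{n,\k}\le\ucE_n(\k)$, i.e., penalties nondecreasing along the ladder, which its proof invokes as part of the model construction. With this, any uniform bound $\lambda\lossdiff(\theta)\ge c_1\xi_1\cL_n(\theta)-CH_{n,\k-1}-t$ on $\Theta_{n,\k-1}$ suffices, with $c_1>0$ and $C$ an arbitrary constant:
\[
\uGamma_n(\k-1)\ge c_1\bigl\{\ucE_n(\k-1)-H_{n,\k-1}\bigr\}-CH_{n,\k-1}-t\ge\bigl(c_1\utau_*-C\bigr)\ucE_n(\k)-t .
\]
A large $\utau_*$ then beats your upper bound for $\uGamma_n(\k)$.

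You still need $c_1>0$. That means the discretization cost on outer shells must be a fraction below one of $\xi_1\cL_n$. This is what $\xi_7<\xi_1/(2\xi_6)$ provides, once $\lambda$ is kept at most of order one, as you and the paper both tacitly do.

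The paper arranges this by peeling in shells of $\xi_1\cL_n+H_{n,\k-1}$ at levels $2^{j-1}\ucE_n(\k-1)$. Each shell then contributes at most $\exp\{(\tC'+1)\ucE_n(\k)-2^{j-2}\ucE_n(\k-1)\}$.
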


\begin{remark}[Technical similarity in Bayesian and frequentist analysis]
The proof of \cref{thm:selection_erm} follows essentially the same strategy as that of \cref{thm:selection_vb},  despite the former being purely frequentist and the latter arising from a Bayesian variational formulation. This structural similarity highlights that the KL divergence term in the variational free energy in \eqref{eq:vfe} plays a role analogous to a complexity penalty in frequentist model selection.
\end{remark}

The next theorem illustrates that the ESA-ERM estimator enjoys a near-oracle rate given by
  \begin{align}
        \ucE_n^*:=\min\cbr{\ucE_n(\k):\k\in[\ukopt(\utau_*)+1]}
        =\ucE_n(\ukopt(\utau_*))\wedge \ucE_n(\ukopt(\utau_*)+1).
    \end{align}

\begin{theorem}[Oracle inequality for ESA-ERM estimator]
\label{thm:oracle_erm}
Assume that the function $\theta\mapsto \cL_n(\theta)$ is convex. 
Under the same assumption of \cref{thm:selection_erm}, there exists an absolute constant  $\tC_7>0$ such that
    \begin{align}
        \Pstar\del{ \cL_n(\esaerm)\le \tC_7\cbr[1]{\ucE_n^*+\log(M_nn)}}
        \ge 1-\ueta_n-n^{-1}
    \end{align}
for any sufficiently large $n\in \bN$.
\end{theorem}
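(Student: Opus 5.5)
By convexity of $\theta\mapsto\cL_n(\theta)$ and Jensen's inequality, we have
\[
\cL_n(\esaerm)\le\sum_{\k=1}^{\usm}\omegafesa\cL_n(\htheta_{n,\k}).
\]
So it suffices to show, on a high-probability event, that this weighted average is at most $\tC_7\{\ucE_n^*+\log(M_nn)\}$. I would work on the intersection of two events. The first is $\Omega_1:=\{\usm\ge\ukopt(\utau_*)+1\}$, which has probability at least $1-\ueta_n$ by \cref{thm:selection_erm}. The second is an event $\Omega_2$ on which the penalized empirical risks $\mPER(\k)$ are uniformly comparable to the population risks $\cL_n(\htheta_{n,\k})$. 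On $\Omega_1$, the index $\k^\dagger\in\{\ukopt(\utau_*),\ukopt(\utau_*)+1\}$ attaining $\ucE_n^*$ satisfies $\k^\dagger\le\usm$, so it lies in the aggregation range.

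The core step is a standard EWA-type comparison, restricted to the random range $[\usm]\subseteq[M_n]$. I would establish, for each fixed $\k\in[M_n]$, two deviation bounds.
\begin{enumerate}
\item An upper deviation at $\k^\dagger$: $\lambda\{\ell_n(\htheta_{n,\k^\dagger})-\ell_n^\star\}\le\lambda\{\ell_n(\theta_{\k^\dagger}^o)-\ell_n^\star\}\lesssim\cL_n(\theta^o_{\k^\dagger})+\log n$, where $\theta^o_{\k^\dagger}$ is the risk minimizer in $\Theta_{n,\k^\dagger}$. This follows from the second inequality of \cref{assume:loss} and Markov's inequality, and it gives $\mPER(\k^\dagger)-\lambda\ell_n^\star\lesssim\ucE_n(\k^\dagger)+\log n$.
\item A uniform-in-model lower deviation: $\lambda\{\ell_n(\htheta_{n,\k})-\ell_n^\star\}+H_{n,\k}\ge c\,\cL_n(\htheta_{n,\k})-C\log(M_nn)$ simultaneously for all $\k\in[M_n]$, with probability at least $1-n^{-1}$ after a union bound over $\k$ that costs $\log M_n$. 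For fixed $\theta$ this comes from the first inequality of \cref{assume:loss} via Chernoff. It is made uniform over the data-dependent $\htheta_{n,\k}$ by the peeling and covering argument underlying \cref{prop:risk_bound_erm}, using the entropy bound \eqref{eq:assume_entropy} absorbed into $H_{n,\k}$ and the Lipschitz condition \eqref{eq:assume_lip} to pass from the cover to arbitrary points, since $\xi_7<\xi_1/(2\xi_6)$ keeps the discretization error at most half the risk.
\end{enumerate}

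Given these, the aggregation bound proceeds as follows. Write $F(\k):=\mPER(\k)-\lambda\ell_n^\star$. Since each weight satisfies $\omegafesa\le\exp(-F(\k)+F(\k^\dagger))$ (because $\k^\dagger\le\usm$ on $\Omega_1$), the mass placed on models with $c\,\cL_n(\htheta_{n,\k})\ge F(\k^\dagger)+C\log(M_nn)+t$ is at most $M_n e^{-t}$. Summing over these tails by layering in $t$ bounds the weighted average of $\cL_n(\htheta_{n,\k})$ by a constant times $F(\k^\dagger)+\log(M_nn)$. By the first deviation bound this is $\lesssim\ucE_n^*+\log(M_nn)$. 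Alternatively, one can use the Gibbs variational identity $\sum_\k\omegafesa\{F(\k)+\log\omegafesa\}=-\log\sum_{\k\le\usm}e^{-F(\k)}\le F(\k^\dagger)$ together with $-\sum\omega\log\omega\le\log M_n$, and then insert the second deviation bound, which gives the same conclusion more cleanly.

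The main obstacle is making the lower deviation in the second bound hold uniformly and with failure probability at most $n^{-1}$. The probability bound in the statement, $1-\ueta_n-n^{-1}$, suggests the authors control all models with a single $n^{-1}$ event. I would handle this by reusing the event from \cref{prop:risk_bound_erm}, but run at confidence level $(M_nn)^{-1}$ per model. This is exactly why $\log(M_nn)$, rather than $\log n$, appears in the final rate. A secondary subtlety is that the two deviation bounds must be combined on a common event, so the $\Omega_1$ event from \cref{thm:selection_erm} should be intersected with this uniform event rather than re-derived.
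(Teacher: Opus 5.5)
Your proposal is correct and matches the paper's proof step for step: Jensen via convexity, then the uniform lower-deviation event from the peeling bound \eqref{eq:erm_sup_bound}, run at level $(M_nn)^{-1}$ per model and unioned over $\k$ (the paper's $\fU_n$). After that come the Gibbs variational inequality with entropy at most $\log M_n$, ERM optimality, a Markov bound from the second part of \cref{assume:loss}, and the event $\{\usm\ge\ukopt(\utau_*)+1\}$ from \cref{thm:selection_erm}. The one cosmetic difference is that you use the upper-deviation bound only at the deterministic index $\k^\dagger$, while the paper takes a union over all $\k\in[M_n]$ (its event $\fV_n$). Either version works, provided the two failure probabilities are split as $1/(2n)$ each to match the stated $n^{-1}$.
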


\subsection{Practical alternatives}
\label{subsec:practical}

In practice, implementing the ESA procedure via penalized empirical risk minimization can be far from straightforward, as the appropriate penalty or calibration constant is rarely self-evident. While our theoretical results identify the correct asymptotic order of the penalty, they do not prescribe its exact non-asymptotic value, since the asymptotic guarantees remain unchanged under any constant multiplicative factor. To circumvent this issue, the ESA procedure can be implemented using established model selection criteria that serve as proxies for the excess risk. Common examples include validation error obtained via sample splitting, the Bayesian VFE, and information criteria such as AIC, with theoretical justification for the first two approaches provided in \cref{appendix:freq_etc}. These quantities effectively estimate the excess risk along the model ladder, enabling the construction of aggregation weights in a manner analogous to the theoretical penalized risk. We numerically investigate these practical alternatives for nonparametric regression in \cref{subsec:nonpar_reg}.

\section{Numerical studies}
\label{sec:experiment}

\subsection{Large-scale image classification with variational neural networks}

In this numerical experiment, we assess the performance of the proposed ESA approach on several image classification benchmarks. We consider standard grayscale benchmarks MNIST and Fashion-MNIST, each containing 10 classes as well as the color image datasets CIFAR-10 and CIFAR-100, which comprise 10 and 100 classes, respectively. Moreover, we also consider  Tiny-ImageNet, a substantially more demanding dataset with 200 distinct classes and higher-resolution images.

\paragraph{ESA and competitors} For each dataset, we consider a ladder of neural network models; detailed model specifications are provided in \cref{appendix:nn_models}.  When applying the proposed ESA procedure, we introduce the promoting parameter $\delta$ (see \cref{remark:earlier_termination}) to encourage earlier termination and further improve computational efficiency. ESA is compared with two baselines: full aggregation (FA), which trains all models in the ladder and then aggregate them with exponential weights, and best single model selection (MS), which selects a single model based on the final VFE.

\paragraph{Training details}  We employ the cross-entropy loss function for classification.  During training, the expected loss under the variational distribution is approximated using Monte Carlo sampling, while the KL divergence term is computed analytically for each Bayesian convolutional or linear layer. To stabilize optimization, the KL term is linearly warmed up during the early training epochs.

We place independent standard Gaussian priors on all network parameters. 
For the variational family, we adopt a Gaussian mean-field approximation of the form  $Q=\prod_j \mathcal{N}(\mu_j,\sigma_j^2),$ where the standard deviation is parameterized as $\sigma_j=\mathrm{softplus}(\rho_j)+10^{-5}.$
The variational parameters $(\mu_j,\rho_j)$ are optimized jointly  during training.  We note that all convolutional and linear layers are Bayesian, and the total KL term is computed by summing layer-wise KL divergences over all Bayesian layers. We tune the inverse temperature parameter according to the approach given in \cref{appendix:inv_temp}.

\paragraph{Results} \cref{tab:image-classification} reports the image classification results on five benchmark datasets. Overall, the proposed ESA achieves accuracy comparable to FA while substantially reducing computation time. For example, on MNIST and Fashion-MNIST, ESA reduces the computation time by more than three times compared to FA, while maintaining similar accuracy. On CIFAR-10, ESA achieves $85.55\%$ accuracy compared to $86.27\%$ for FA, while providing a $4.8\times$ speed-up. Similar trends are observed on CIFAR-100 and Tiny-ImageNet, where ESA consistently achieves competitive accuracy with significantly lower computational cost. Compared to the best single model selection, ESA further improves predictive accuracy while maintaining efficient training.

\begin{table}[t]
\centering
\caption{Comparison of ESA with full aggregation (FA) and best single model selection (MS) on image classification benchmarks. Accuracy and computation time are reported as mean $\pm$ standard deviation over three runs with random seeds 2025, 2026, and 2027. $\Delta$Acc is computed using mean accuracies (ESA minus FA). The computational time for MS is identical to that of FA, as the same set of models are trained.}
\label{tab:image-classification}

\begin{tabular}{llcccc}
\toprule
Dataset & Method & Accuracy & Time & $\Delta$Acc (ESA--FA) & Speed-up (FA/ESA) \\
\midrule

\multirow{3}{*}{MNIST}
& ESA  & $95.95\pm0.08$ & $4.51\pm0.14$ sec   & \multirow{3}{*}{$-1.46$} & \multirow{3}{*}{$3.14\times$} \\
& FA   & $97.41\pm0.08$ & $14.14\pm0.24$ sec  &                         & \\
& MS & $92.91\pm0.34$ &  $14.14\pm0.24$ sec               &                         & \\
\midrule

\multirow{3}{*}{Fashion-MNIST}
& ESA  & $87.54\pm0.57$ & $6.09\pm0.03$ sec   & \multirow{3}{*}{$-1.83$} & \multirow{3}{*}{$3.09\times$} \\
& FA   & $89.37\pm0.48$ & $18.84\pm0.07$ sec  &                         & \\
& MS & $85.55\pm0.34$ & $18.84\pm0.07$ sec    &                         & \\
\midrule

\multirow{3}{*}{CIFAR-10}
& ESA  & $85.55\pm0.74$ & $1.44\pm0.02$ min  & \multirow{3}{*}{$-0.72$} & \multirow{3}{*}{$4.79\times$} \\
& FA   & $86.27\pm1.09$ & $6.89\pm0.05$ min  &                         & \\
& MS & $84.21\pm0.28$ &  $6.89\pm0.05$ min                &                         & \\
\midrule

\multirow{3}{*}{CIFAR-100}
& ESA  & $69.92\pm1.06$ & $24.78\pm12.67$ min & \multirow{3}{*}{$-1.29$} & \multirow{3}{*}{$1.64\times$} \\
& FA   & $71.21\pm0.25$ & $40.60\pm1.45$ min  &                          & \\
& MS & $65.82\pm0.25$ &  $40.60\pm1.45$ min             &                          & \\
\midrule

\multirow{3}{*}{Tiny-ImageNet}
& ESA  & $50.50\pm2.87$ & $60.97\pm27.40$ min & \multirow{3}{*}{$-0.58$} & \multirow{3}{*}{$1.44\times$} \\
& FA   & $51.08\pm2.17$ & $88.05\pm8.12$ min  &                          & \\
& MS & $46.40\pm2.69$ &  $88.05\pm8.12$ min     &                          & \\
\bottomrule
\end{tabular}

\end{table}

\subsection{Clustering with the unknown number of clusters}

In this experiment, we apply the proposed ESA approach to variational Bayesian Gaussian mixture model (VGMM) for clustering. 

\paragraph{VGMM-ESA}
We set $M = 10$ as the maximum number of mixture components. For each $\k\in[M]$, the  samples $X_1,\ldots,X_n\in\R^d$ are modeled using a finite Bayesian Gaussian mixture model with latent allocations $Z_i\in[\k]$:
\begin{align*}
    Z_i &\iidsim \texttt{Categorical}(\pi), \\
    X_i \mid (Z_i = j) &\indsim \N(\mu_j, \Lambda_j^{-1}).
\end{align*}
We place a conjugate Normal–Wishart prior independently on the means and covariances of the Gaussian components, and a Dirichlet prior for the mean of the categorical distribution, i.e., 
    \begin{align*}
        \Lambda_k &\iidsim \texttt{Wishart}(W_0,\nu_0),\\
\mu_k \mid \Lambda_k &\indsim \N \big(m_0,(\beta_0\Lambda_k)^{-1}\big),\\
    \pi &\sim \texttt{Dirichlet}((\alpha_0,\dots, \alpha_0)).
    \end{align*}
We set the hyperparameters as  $m_0 = n^{-1}\sum_{i=1}^nX_i$, $W_0 = \{n^{-1}\sum_{i=1}^n(X_i-m_0)(X_i-m_0)^\top\}^{-1}$, $\nu_0 = d$, $\beta_0 = 1$ and $\alpha_0 = 1$. For every model $\k$, coordinate-ascent variational Bayes procedure  is employed under a mean-field variational family assumption. A collection of simpler candidate variational posteriors are then aggregated using the ESA approach. 

\paragraph{Baselines}
We compare the proposed VGMM-ESA method with several VGMM-based approaches, including full aggregation of VGMMs (VGMM-FA), VGMM selected based on the VFE (VGMM-MS), VGMM with the true number of components (VGMM-oracle),  and VGMM with an infinite number of components modeled using a Dirichlet process prior (inf-VGMM-DP). We also benchmark classical clustering methods, including spectral clustering, K-means, and bisecting K-means. For each method, the number of components is determined in a data-driven way, searched over $\{2,\ldots,10\}$. Specifically, we employ the eigengap criterion for spectral clustering and the elbow method for K-means and bisecting K-means. For reference, we additionally report oracle results obtained using the true number of components. All methods, including the VGMM-based procedures, are implemented using \texttt{scikit-learn}.

\paragraph{Simulation setup} Synthetic samples of  size $n=500$ are generated from the two data-generating mechanisms described below:
\begin{enumerate}
    \item (Setting A: Heterogeneous Gaussians) We generate $X_i \iidsim \sum_{k=1}^3 \pi_k\,\N(\mu_k,\Sigma_k)$ with $\pi=(0.35,0.5,0.15)$ and
\begin{align*}
\mu_1 &= (-4,0)^\top,\quad \mu_2=(0,0)^\top,\quad \mu_3=(4,0)^\top,\\
\Sigma_1 &= \begin{pmatrix} 2 & 0 \\ 0 & 1 \end{pmatrix},\quad
\Sigma_2 = R_{\pi/3} \begin{pmatrix} 2 & 0 \\ 0 & 0.2 \end{pmatrix} R_{\pi/3}^\top, \quad
\Sigma_3 = 0.15\,I_2,
\end{align*}
where $R_\phi \in \mathbb{R}^{2\times 2}$ denotes the rotation matrix corresponding to the angle $\phi$.

    \item (Setting B: Interleaving semicircles) We  draw $Z_i \iidsim \Ber(0.5)$ and $\phi_i \iidsim \Unif(0,\pi)$, and then generate
\begin{align*}
X_i |Z_i,\phi_i\indsim \N \big(\mu_{Z_i}(\phi_i),\, 0.15 \,I_2\big)
\end{align*}
where $\mu_0$ represents the upper unit semicircle and $\mu_1$ does its shifted reflection, namely, $\mu_0(\phi)=(\cos\phi,\sin\phi)^\top$ and $\mu_1(\phi)=(0.8-\cos\phi,0.5-\sin\phi)^\top$. In this case, the Gaussian mixture model misspecifies the true model.
\end{enumerate}

\paragraph{Results}
We evaluate clustering performance using adjusted rand index (ARI), adjusted mutual information (AMI), and normalized mutual information (NMI) as well as the computation time. \cref{tab:clustering-results} presents the results over 50 simulation replicates. See also \cref{fig:clustering_visual} for a visual illustration of the clustering results. Across both correctly specified and misspecified settings, the VGMM-based model selection and aggregation methods outperform the competing baselines, while VGMM-ESA substantially reduces computation cost compared with the global methods VGMM-FA and VGMM-MS.

\begin{table}[t]
\centering
\caption{Comparison of clustering methods under Settings A and B. Clustering performance (ARI, AMI, NMI) and computation time are reported as mean $\pm$ standard deviation over 50 replicates. Bold numbers indicate the best performance in each column (ties are possible). ``$\uparrow$'' and ``$\downarrow$'' indicate that higher and lower values are preferred, respectively.}
\label{tab:clustering-results}
\small
\setlength{\tabcolsep}{6pt}
\begin{tabular}{l l c c c c}
\toprule
Dataset & Method & ARI $(\uparrow)$ & AMI $(\uparrow)$ & NMI $(\uparrow)$ & Time (sec, $\downarrow$) \\
\midrule

\multirow{11}{*}{Setting A}
& VGMM-ESA & \textbf{0.945 $\pm$ 0.020} & \textbf{0.927 $\pm$ 0.021} & \textbf{0.927 $\pm$ 0.021} & 0.969 $\pm$ 0.287 \\
& VGMM-FA & \textbf{0.945 $\pm$ 0.020} & \textbf{0.927 $\pm$ 0.021} & \textbf{0.927 $\pm$ 0.021} & 8.111 $\pm$ 1.547 \\
& VGMM-MS & \textbf{0.945 $\pm$ 0.020} & \textbf{0.927 $\pm$ 0.021} & \textbf{0.927 $\pm$ 0.021} & 7.612 $\pm$ 1.372 \\
& VGMM-Oracle & \textbf{0.945 $\pm$ 0.020} & \textbf{0.927 $\pm$ 0.021} & \textbf{0.927 $\pm$ 0.021} & 0.061 $\pm$ 0.018 \\
& inf-VGMM-DP & 0.945 $\pm$ 0.020 & 0.926 $\pm$ 0.021 & 0.927 $\pm$ 0.021 & 1.153 $\pm$ 0.311 \\
& Spectral-Eigengap & 0.875 $\pm$ 0.095 & 0.875 $\pm$ 0.061 & 0.875 $\pm$ 0.060 & 0.033 $\pm$ 0.007 \\
& Spectral-Oracle & 0.899 $\pm$ 0.056 & 0.886 $\pm$ 0.050 & 0.887 $\pm$ 0.050 & 0.019 $\pm$ 0.005 \\
& K-means-Elbow & 0.601 $\pm$ 0.030 & 0.718 $\pm$ 0.023 & 0.719 $\pm$ 0.023 & 0.203 $\pm$ 0.057 \\
& K-means-Oracle & 0.854 $\pm$ 0.029 & 0.836 $\pm$ 0.028 & 0.837 $\pm$ 0.028 & 0.011 $\pm$ 0.004 \\
& Bisecting-K-means-Elbow & 0.546 $\pm$ 0.069 & 0.681 $\pm$ 0.037 & 0.682 $\pm$ 0.037 & 0.023 $\pm$ 0.008 \\
& Bisecting-K-means-Oracle & 0.875 $\pm$ 0.034 & 0.844 $\pm$ 0.032 & 0.845 $\pm$ 0.032 & \textbf{0.001 $\pm$ 0.001} \\

\midrule

\multirow{11}{*}{Setting B}
& VGMM-ESA & 0.404 $\pm$ 0.026 & \textbf{0.502 $\pm$ 0.033} & \textbf{0.504 $\pm$ 0.033} & 0.993 $\pm$ 0.502 \\
& VGMM-FA & 0.404 $\pm$ 0.026 & \textbf{0.502 $\pm$ 0.033} & \textbf{0.504 $\pm$ 0.033} & 5.385 $\pm$ 2.794 \\
& VGMM-MS & 0.404 $\pm$ 0.026 & \textbf{0.502 $\pm$ 0.033} & \textbf{0.504 $\pm$ 0.033} & 5.009 $\pm$ 2.465 \\
& VGMM-Oracle & \textbf{0.427 $\pm$ 0.055} & 0.339 $\pm$ 0.046 & 0.340 $\pm$ 0.046 & 0.127 $\pm$ 0.185 \\
& inf-VGMM-DP & 0.346 $\pm$ 0.028 & 0.483 $\pm$ 0.028 & 0.485 $\pm$ 0.028 & 0.873 $\pm$ 0.442 \\
& Spectral-Eigengap & 0.258 $\pm$ 0.058 & 0.199 $\pm$ 0.046 & 0.200 $\pm$ 0.046 & 0.024 $\pm$ 0.007 \\
& Spectral-Oracle & 0.258 $\pm$ 0.058 & 0.199 $\pm$ 0.046 & 0.200 $\pm$ 0.046 & 0.014 $\pm$ 0.005 \\
& K-means-Elbow & 0.285 $\pm$ 0.034 & 0.360 $\pm$ 0.048 & 0.362 $\pm$ 0.048 & 0.128 $\pm$ 0.052 \\
& K-means-Oracle & 0.193 $\pm$ 0.050 & 0.145 $\pm$ 0.039 & 0.146 $\pm$ 0.039 & 0.006 $\pm$ 0.003 \\
& Bisecting-K-means-Elbow & 0.292 $\pm$ 0.036 & 0.441 $\pm$ 0.064 & 0.443 $\pm$ 0.063 & 0.013 $\pm$ 0.007 \\
& Bisecting-K-means-Oracle & 0.194 $\pm$ 0.049 & 0.146 $\pm$ 0.038 & 0.147 $\pm$ 0.038 & \textbf{0.001 $\pm$ 0.000} \\

\bottomrule
\end{tabular}
\end{table}

\begin{figure}[t]
    \centering
    \includegraphics[scale=0.35]{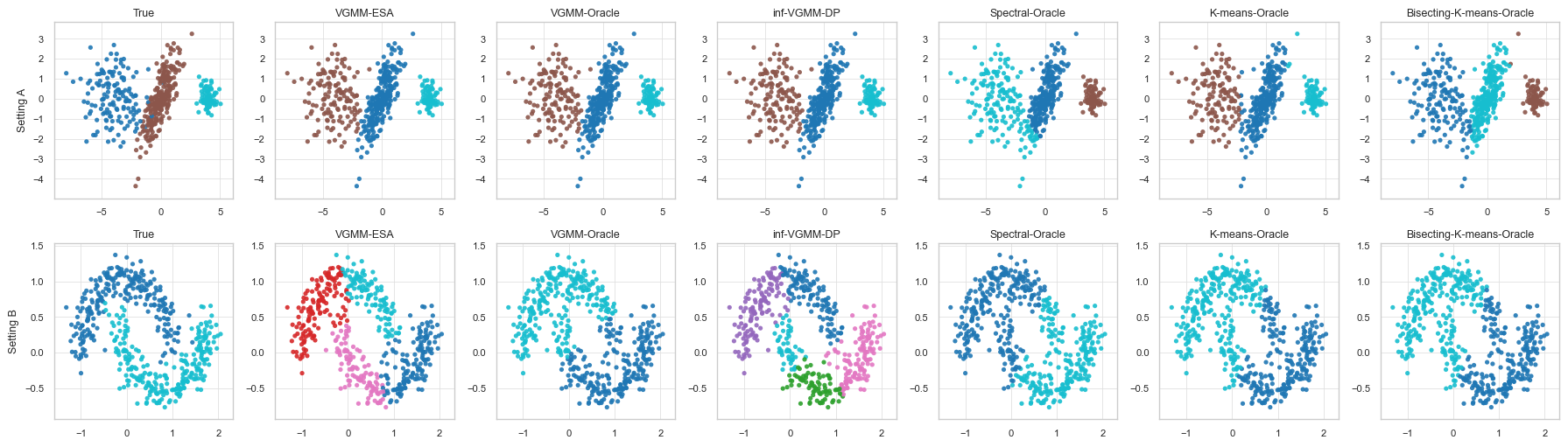}
    \caption{Clustering visualizations for Setting A (upper panels) and Setting B (lower panels)}
    \label{fig:clustering_visual}
\end{figure}

\subsection{Empirical Bayes inference for sparse linear regression}

In this experiment, we consider a  sparse linear regression model with unknown noise variance.  We evaluate an empirical Bayes ESA approach built on the sum of single effects (SuSiE) regression framework \citep{wang2020simple}, which optimizes the prior variance parameters during training.

\paragraph{SuSiE-ESA} We consider the linear model $ Y|(\theta,\sigma^2) \sim \N(X\theta,\sigma^2 I_n)$, where $Y\in\R^n$ is the centered response and $X\in\R^{n\times p}$ is the centered design matrix. Given the number of single effects $\sk$, the SuSiE approach models the regression coefficient as
\begin{align*}
\theta = \sum_{h=1}^{\sk} \theta_{(h)}, \quad
\theta_{(h)} = \gamma_h b_h ,
\end{align*}
with the prior distributions $\gamma_h \iidsim \texttt{Categorical}(1/p,\dots,1/p)$ and  $b_h \iidsim \N(0,\psi_j).$ Under this specification, each component $\theta_{(h)}$ has at most one nonzero coordinate.

Iterative Bayesian stepwise selection (IBSS) is used for fitting the SuSiE method, which is coordinate-ascent variational Bayes assuming a mean-field variational family, where each coordinate update reduces to a single-effect regression (SER) fit to the current expected residuals. We obtained SuSiE fits via the \texttt{susieR} R-package. The  noise variance $\sigma^2$ and the prior variances $\psi_h$ are estimated internally in an  empirical Bayes manner. We set $M=10$ as the maximum number of single effects and consider the model ladder $\sk\in[M]$. Along this ladder, SuSiE-ESA computes variational fits sequentially and aggregates them up to the data-dependent stopping index. Throughout, we use the promoting parameter of $\delta = 10^{-4}$ encourage earlier termination (see \cref{remark:earlier_termination}).

\paragraph{Baselines}
We compare the proposed SuSiE-ESA method with several SuSiE-based approaches, including full aggregation along the model ladder (SuSiE-FA), selection based on the VFE (SuSiE-MS), and an oracle SuSiE fit using the true number of nonzero coefficients (SuSiE-oracle). We also consider several Bayesian sparse regression methods. These include variational Bayes with a point-mass spike and Laplace slab prior (implemented in the \texttt{sparsevb} R package; \citep{ray2021variational}) and with a point-mass spike and Gaussian slab prior (\texttt{varbvs}; \citep{carbonetto2012scalable}), as well as a maximum a posteriori estimator with  a Laplace spike and Laplace slab prior (\texttt{SSLASSO}; \citep{rovckova2018spike}). For these methods, we first estimate the noise variance using the procedure implemented in the \texttt{selectiveInference} R package \citep{reid2016study} and then apply each method to the rescaled data $(\tilde X,\tilde Y)=(X/\hat\varsigma, Y/\hat\varsigma)$.  Finally,  widely used frequentist approaches, LASSO and SCAD, are also considered.

\paragraph{Simulation setup}
For each replicate, we generate input vectors as $X_i \iidsim \N(0,I_p)$. We then generate outcomes from the linear model $Y_i=X_i^\top\theta^\star+Z_i$ by drawing a noise variable $Z_i$, where the ground truth $\theta^\star$ is set to 
    \begin{align*}
    \theta^\star = (1,2,3,0,\ldots,0)^\top,
    \end{align*}
which has $s=3$ nonzero entries. The noise variables $Z_1,\dots,Z_n$ are sampled i.i.d. from either a Gaussian distribution with mean 0 and standard deviation 1.5 or a Cauchy distribution with location 0 and scale 0.5.
Throughout the experiments we fix $n=100$ and $p=1,000$. 

\paragraph{Results}
We evaluate estimation accuracy using the $L_2$ error $\|\hat\theta-\theta^{\star}\|_2$, where $\htheta$ represents either a point estimator or a variational posterior mean. For Bayesian approaches, we compute the posterior inclusion probability (PIP) for each variable, denoted by $\mathrm{PIP}_j$ for the $j$-th variable, and define the selected variable set as $\hS=\{j\in[p]:\mathrm{PIP}_j>0.5\}$. We assess variable selection performance using the true positive rate (TPR) and the false discovery rate (FDR) defined as
\begin{align*}
\mathrm{TPR}=\frac{|\hS\cap S^{\star}|}{|S^{\star}|}, \quad
\mathrm{FDR}=\frac{|\hS\setminus S^{\star}|}{\max(1,|\hS|)},
\end{align*}
where $S^\star:=\{j\in[p]:\theta^\star_j\neq 0\}$.

\cref{tab:susie-results} summarizes $L_2$ error, FDR, TPR and computation time over 30 replicates. On both settings, SuSiE-based model selection and aggregation methods outperform the competing baselines, while SuSiE-ESA substantially reduces computation cost compared with the global methods SuSiE-FA and SuSiE-MS.

\begin{table}[t]
\centering
\caption{Comparison of sparse regression methods. Evaluation metrics are reported as mean $\pm$ standard deviation over 30 replicates. Bold numbers indicate the best performance in each column (ties are possible). ``$\uparrow$'' and ``$\downarrow$'' indicate that higher and lower values are preferred, respectively.}
\label{tab:susie-results}
\small
\setlength{\tabcolsep}{6pt}
\begin{tabular}{l l c c c c}
\toprule
Noise & Method & $L_2$ error $(\downarrow)$ & TPR $(\uparrow)$ & FDR $(\downarrow)$ & Time (sec, $\downarrow$) \\
\midrule

\multirow{9}{*}{Gaussian}
& SuSiE-ESA    & $0.293 \pm 0.139$ & \textbf{1.000 $\pm$ 0.000} & \textbf{0.000 $\pm$ 0.000} & $0.406 \pm 0.457$ \\
& SuSiE-FA     & $0.287 \pm 0.129$ & \textbf{1.000 $\pm$ 0.000} & \textbf{0.000 $\pm$ 0.000} & $1.611 \pm 0.274$ \\
& SuSiE-MS     & \textbf{0.284 $\pm$ 0.126} & \textbf{1.000 $\pm$ 0.000} & \textbf{0.000 $\pm$ 0.000} & $1.611 \pm 0.274$ \\
& SuSiE-Oracle & \textbf{0.284 $\pm$ 0.126} & \textbf{1.000 $\pm$ 0.000} & \textbf{0.000 $\pm$ 0.000} & \textbf{0.062 $\pm$ 0.014} \\
& varbvs       & $0.300 \pm 0.118$ & \textbf{1.000 $\pm$ 0.000} & $0.008 \pm 0.046$ & $0.240 \pm 0.084$ \\
& sparsevb     & $0.368 \pm 0.270$ & $0.978 \pm 0.085$ & $0.049 \pm 0.151$ & $0.192 \pm 0.020$ \\
& SSLASSO      & $0.463 \pm 0.366$ & $0.911 \pm 0.150$ & \textbf{0.000 $\pm$ 0.000} & $0.203 \pm 0.028$ \\
& LASSO        & $0.833 \pm 0.216$ & \textbf{1.000 $\pm$ 0.000} & $0.785 \pm 0.175$ & $0.063 \pm 0.008$ \\
& SCAD         & $0.485 \pm 0.178$ & \textbf{1.000 $\pm$ 0.000} & $0.660 \pm 0.262$ & $0.099 \pm 0.015$ \\

\midrule

\multirow{9}{*}{Cauchy}
& SuSiE-ESA    & $2.447 \pm 1.369$ & $0.356 \pm 0.391$ & $0.011 \pm 0.061$ & $0.394 \pm 0.483$ \\
& SuSiE-FA     & $2.439 \pm 1.380$ & $0.356 \pm 0.391$ & $0.011 \pm 0.061$ & $1.599 \pm 0.780$ \\
& SuSiE-MS     & $2.444 \pm 1.391$ & $0.356 \pm 0.391$ & $0.011 \pm 0.061$ & $1.599 \pm 0.780$ \\
& SuSiE-Oracle & \textbf{2.435 $\pm$ 1.379} & $0.356 \pm 0.391$ & $0.011 \pm 0.061$ & $0.073 \pm 0.032$ \\
& varbvs       & $2.482 \pm 1.359$ & $0.344 \pm 0.396$ & $0.022 \pm 0.085$ & $0.221 \pm 0.113$ \\
& sparsevb     & $2.571 \pm 1.307$ & $0.333 \pm 0.382$ & $0.072 \pm 0.239$ & $0.210 \pm 0.049$ \\
& SSLASSO      & $2.849 \pm 1.266$ & $0.233 \pm 0.341$ & \textbf{0.000 $\pm$ 0.000} & $0.129 \pm 0.043$ \\
& LASSO        & $2.681 \pm 1.104$ & \textbf{0.500 $\pm$ 0.469} & $0.500 \pm 0.425$ & \textbf{0.070 $\pm$ 0.019} \\
& SCAD         & $2.555 \pm 1.242$ & \textbf{0.500 $\pm$ 0.461} & $0.545 \pm 0.408$ & $0.269 \pm 0.092$ \\

\bottomrule
\end{tabular}
\end{table}

\subsection{Hyperparameter tuning for frequentist nonparametric regression models}
\label{subsec:nonpar_reg}

We compare four hyperparameter tuning strategies in nonparametric regression problems: early-stopped aggregation (ESA), full aggregation (FA),  best model selection (MS), and $5$-fold cross-validation (CV). For each method, we report the test root mean squared error (RMSE) and the wall-clock time for hyperparameter tuning.  We use four benchmark regression data sets: Boston Housing, Concrete, Energy Efficiency, and Wine Quality.
For each data set, we randomly split the data into training and test sets,
and repeat the experiment for 30 independent runs with different random seeds.

\paragraph{Model classes and ladders}

We consider three widely used regression models: random forest (RF), gradient boosted trees implemented via XGBoost (XGB), and $k$-nearest neighbor regression (kNN). For each model class, we construct a ladder of models with increasing complexity and denote the resulting predictors by $\{\hf_{\sk}: \sk=1,\dots,M\}$. 
For each trained predictor, we compute a model selection criterion.  In ESA, the models are fitted sequentially along the ladder and the procedure is stopped once the criterion ceases to decrease.  The final predictor is obtained by aggregating the fitted models up to the stopping index using exponential weights based on the negative criterion values. Full aggregation also uses such exponential weights but computed over all models in the ladder. The model selection approach uses the predictor that minimizes the model selection criterion for prediction.

Let $\{(X_i, Y_i)\}_{i\in[n]}$ denote the training sample, which consists of $n$-pairs of input $X_i$ and output $Y_i$.  For random forests, we specify its model complexity by the maximum depth of the trees.  We consider the ladder $\texttt{max\_depth} \in \{2,4,8,12,16,32\}$.  We use the out-of-bag (OOB) sum of squared errors for the model selection criterion, which is defined as
\begin{align*}
\text{OOBE}(\sk)
= \sum_{i=1}^n \big(Y_i - \widehat f^{\mathrm{OOB}}_{\sk}(X_i)\bigr)^2,
\end{align*}
where $\widehat f^{\mathrm{OOB}}_{\sk}(X_i)$ denotes the OOB prediction for observation $i$.  For XGBoost regression, we consider the model ladder with different values $\texttt{max\_depth} \in \{2,4,6,8,12\}$ of  the maximum tree depth. For each depth level, we use the validation sum of squares as the model selection criterion, which is discussed in \cref{subsec:sample_split}. For $k$-nearest neighbor regression,  we consider the ladder $\texttt{num\_nbr}\in \{1,3,5,10,20,40,80,160\}$ for the number of neighbors, We use the corrected Akaike information criterion (AICc; \citep{sugiura1978further}) as the model selection criterion, which is given as
\begin{align*}
    \text{AICc}(\sk) 
= n\log\del{\text{SSE}(\sk)/n} + 2\mathrm{df}(\sk) + \frac{2\mathrm{df}(\sk)(\mathrm{df}(\sk)+1)}{n-\mathrm{df}(\sk)-1}.
\end{align*}
where $ \text{SSE}(\sk):=\sum_{i=1}^n (Y_i - \hf_{\sk}(X_i))^2$ denotes the training sum of squared errors and $\mathrm{df}(\sk)$  the effective degrees of freedom of model $\sk$, which we approximate by the sample size divided by the number of neighbors.

\paragraph{Results}

\cref{tab:reg_rmse,tab:reg_time} summarize the predictive performance and hyperparameter tuning time of the competing methods across the benchmark data sets. Overall, ESA achieves predictive accuracy comparable to that of the other tuning strategies. In most dataset and model combinations, the test RMSE obtained by ESA is nearly identical to those of full aggregation and the best single model selected by a model selection criterion, and is also close to the performance obtained by cross-validation. This indicates that early stopping along the model ladder does not noticeably degrade predictive performance.

Meanwhile, ESA substantially reduces the computational cost of hyperparameter tuning as it often avoids fitting the entire ladder of models. As a result, ESA consistently requires less tuning time than cross-validation, which must repeatedly train models across multiple folds. In several cases, the reduction in tuning time is particularly pronounced, especially for the tree-based models such as random forests and XGBoost.

Taken together, these results suggest that ESA provides an effective compromise between predictive accuracy and computational efficiency. While maintaining a level of accuracy comparable to standard tuning strategies, ESA can substantially reduce the cost of hyperparameter tuning, making it particularly attractive in settings where model training is computationally expensive.

\begin{table}[t]
\centering
\caption{Comparison of hyperparameter tuning methods for regression benchmarks. Test RMSE are reported as mean $\pm$ standard deviation over 30 random training-test splits. Bold numbers indicate the best performance in each row.}
\label{tab:reg_rmse}
\begin{tabular}{llcccc}
\toprule
Dataset & Model & ESA & FA & MS & CV \\
\midrule
\multirow{3}{*}{Boston}
& RF  & 3.550 $\pm$ 0.556 & 3.549 $\pm$ 0.556 & \textbf{3.538 $\pm$ 0.555} & 3.539 $\pm$ 0.558 \\
& XGB & \textbf{3.365 $\pm$ 0.530} & 3.407 $\pm$ 0.524 & 3.396 $\pm$ 0.544 & 3.371 $\pm$ 0.518 \\
& kNN & 5.123 $\pm$ 0.640 & 5.123 $\pm$ 0.640 & 5.123 $\pm$ 0.640 & \textbf{4.942 $\pm$ 0.735} \\

\midrule

\multirow{3}{*}{Concrete}
& RF  & 5.227 $\pm$ 0.443 & 5.227 $\pm$ 0.443 & \textbf{5.220 $\pm$ 0.445} & 5.220 $\pm$ 0.444 \\
& XGB & \textbf{4.387 $\pm$ 0.446} & 4.447 $\pm$ 0.462 & 4.457 $\pm$ 0.471 & 4.391 $\pm$ 0.439 \\
& kNN & 8.938 $\pm$ 0.567 & 8.938 $\pm$ 0.567 & 8.938 $\pm$ 0.567 & \textbf{8.936 $\pm$ 0.628} \\

\midrule

\multirow{3}{*}{Energy}
& RF  & 0.980 $\pm$ 0.391 & 0.978 $\pm$ 0.392 & 0.973 $\pm$ 0.394 & \textbf{0.973 $\pm$ 0.395} \\
& XGB & \textbf{0.531 $\pm$ 0.045} & 0.532 $\pm$ 0.054 & 0.543 $\pm$ 0.055 & 0.546 $\pm$ 0.061 \\
& kNN & 1.956 $\pm$ 0.243 & 1.956 $\pm$ 0.243 & 1.956 $\pm$ 0.243 & \textbf{1.708 $\pm$ 0.235} \\

\midrule

\multirow{3}{*}{Wine}
& RF  & 0.588 $\pm$ 0.027 & 0.588 $\pm$ 0.027 & 0.573 $\pm$ 0.027 & \textbf{0.573 $\pm$ 0.027} \\
& XGB & 0.588 $\pm$ 0.026 & \textbf{0.585 $\pm$ 0.025} & 0.589 $\pm$ 0.025 & 0.588 $\pm$ 0.025 \\
& kNN & 0.652 $\pm$ 0.028 & 0.652 $\pm$ 0.028 & 0.652 $\pm$ 0.028 & \textbf{0.650 $\pm$ 0.028} \\

\bottomrule
\end{tabular}
\end{table}

\begin{table}[t]
\centering
\caption{Hyperparameter tuning time in seconds, reported as mean $\pm$ standard deviation over 20 random training-test splits. Bold numbers indicate the minimum running time in each row (ties are possible).  Since MS trains the same set of models as FA, their computational times are identical, and we omit the MS running time.}
\label{tab:reg_time}
\begin{tabular}{llccc}
\toprule
Dataset & Model & ESA & FA & CV \\
\midrule

\multirow{3}{*}{Boston}
& RF  & \textbf{2.879 $\pm$ 0.262} & 3.066 $\pm$ 0.061 & 13.765 $\pm$ 0.372 \\
& XGB & \textbf{0.397 $\pm$ 0.162} & 1.088 $\pm$ 0.320 & 5.172 $\pm$ 0.647 \\
& kNN & \textbf{0.008 $\pm$ 0.001} & 0.034 $\pm$ 0.002 & 0.068 $\pm$ 0.003 \\

\midrule

\multirow{3}{*}{Concrete}
& RF  & \textbf{3.134 $\pm$ 0.040} & \textbf{3.134 $\pm$ 0.040} & 13.809 $\pm$ 0.145 \\
& XGB & \textbf{0.821 $\pm$ 0.258} & 1.523 $\pm$ 0.335 & 7.988 $\pm$ 0.934 \\
& kNN & \textbf{0.029 $\pm$ 0.002} & 0.129 $\pm$ 0.004 & 0.116 $\pm$ 0.002 \\

\midrule

\multirow{3}{*}{Energy}
& RF  & \textbf{2.664 $\pm$ 0.378} & 3.116 $\pm$ 0.024 & 14.569 $\pm$ 0.197 \\
& XGB & \textbf{0.679 $\pm$ 0.274} & 1.078 $\pm$ 0.299 & 5.250 $\pm$ 0.549 \\
& kNN & \textbf{0.010 $\pm$ 0.002} & 0.047 $\pm$ 0.001 & 0.083 $\pm$ 0.001 \\

\midrule

\multirow{3}{*}{Wine}
& RF  & \textbf{3.211 $\pm$ 0.016} & \textbf{3.211 $\pm$ 0.016} & 13.640 $\pm$ 0.072 \\
& XGB & \textbf{0.571 $\pm$ 0.242} & 0.712 $\pm$ 0.185 & 3.690 $\pm$ 0.498 \\
& kNN & \textbf{0.237 $\pm$ 0.037} & 0.300 $\pm$ 0.002 & 0.310 $\pm$ 0.002 \\
\bottomrule
\end{tabular}
\end{table}

\section{Conclusion and future work}
\label{sec:conclusion}

In this paper, we introduced early-stopped aggregation (ESA), a versatile framework designed to overcome the computational bottlenecks of traditional model aggregation by stopping the evaluation process once additional increases in model complexity no longer yield improvement. By monitoring local changes in a data-driven criterion-such as the variational free energy or penalized empirical risk along an ordered ladder of candidate models, ESA avoids the exhaustive computation typically required for adaptive inference. Our theoretical investigation establishes that ESA achieves adaptive optimal contraction rates across variational Bayes, empirical Bayes, and frequentist paradigms, demonstrating that computational efficiency does not have to come at the cost of statistical optimality. Furthermore, extensive numerical studies across various tasks consistently show that ESA provides significant speed-ups while maintaining predictive performance comparable to full aggregation as well as other baseline methods.

There are several interesting directions for future research.  First, it would be desirable to establish sharper optimality results for the ESA framework.  The current theory provides near-oracle guarantees, but these bounds are not sharp because the near-optimal model index is defined up to a potentially large multiplicative constant.  Although \cref{prop:ushaped_risk} shows that ESA achieves exact adaptive optimality under an idealized U-shaped risk condition, a general result matching the optimal guarantees of full aggregation remains an open problem.

Another promising direction is the extension of ESA to multiple model indices. 
The present work focuses on a ``one-dimensional'' ladder of nested models, whereas many modern learning problems involve multidimensional model complexity.  For example, in random forest algorithms, we may need to simultaneously tune the number of trees and the complexity of individual weak learners, such as tree depth.  Developing early-stopping procedures capable of navigating such multidimensional grids or graphs while retaining theoretical guarantees would substantially broaden the applicability of ESA to more complex model selection problems.

\bibliographystyle{plainnat}
\bibliography{_references}

\newpage

\setcounter{page}{1}
\renewcommand{\thepage}{S-\arabic{page}}

\begin{appendices}
\crefalias{section}{appendix}
\crefalias{subsection}{appendix}
\crefalias{subsubsection}{appendix}

	\begin{center}
		{\large \textsc{Supplement to ``Early-stopped aggregation: Adaptive inference with computational efficiency''}} \\
		\medskip 
		{Ilsang Ohn, Shitao Fan, Jungbin Jun and Lizhen Lin}
		\medskip
	\end{center}
		 
\setcounter{tocdepth}{3}
\tableofcontents

\section{Other frequentist aggregation approaches}
\label{appendix:freq_etc}

\subsection{Sample-splitting approach}
\label{subsec:sample_split}

As we mentioned in \cref{subsec:practical}, implementing the ESA procedure for empirical risk minimization requires careful calibration of the penalty term in practice.  To circumvent this issue, we can adopt a sample-splitting strategy, which has been used in earlier work on frequentist aggregation \citepS{lecue2007optimal,rigollet2007linear, goldenshluger2009universal}. Specifically, we partition the full sample into a training sample and an ``aggregation'' sample. Candidate estimators are constructed using the training sample, while the aggregation weights are computed based on the aggregation sample. When the training and aggregation samples are independent, the candidate estimators can be treated as non-random objects when analyzing the aggregation step, by conditioning on the training sample. Accordingly, all theoretical guarantees are stated with respect to the distribution of the aggregation sample. Throughout this subsection, we use the same notation as before, now interpreted with respect to the aggregation sample. In particular, $X^{(n)}$ and $\Pstar$ denote the aggregation sample and its distribution, and $\ell_n$ denotes the loss function evaluated on the aggregation sample.

Let $\ttheta_{n,\k}$ denote a candidate estimator constructed using the training sample, not necessarily the ERM, associated with a model $\k\in[M_n]$. The sample-splitting version of ESA (SS-ESA) evaluates these candidate estimators using the aggregation sample, which thus plays the role of a validation set. Specifically, SS-ESA monitors the validation loss $\ell_n(\ttheta_{n,\k})$ computed on the aggregation sample and aggregates candidate estimators up to a data-driven stopping index
$\tsm$ defined as
    \begin{align}
        \tsm=\inf\cbr{\k\in[M_n]\setminus\{1\}:
        \ell_n(\ttheta_{n,\k-1})<  \ell_n(\ttheta_{n,\k})}\wedge M_n.
    \end{align}
In other words, we stop the procedure until the validation loss increases. Then, building upon the model selection result with early stopping, we define the SS-ESA estimator as
    \begin{align}
        \esass:=\sum_{\k=1}^{\tsm}\omegass\ttheta_{n,\k}
        \text{ with }
         \omegass
        :=\frac{\exp(-\alpha\ell_n(\ttheta_{n,\k}))}{\sum_{\k'=1}^{\usm}\exp(-\alpha\ell_n(\ttheta_{n,\k'}) )}.
    \end{align}
Here, the parameter $\alpha>0$ controls the smoothness of the exponential weights, with larger values placing more mass on candidate estimators with smaller validation loss.

In the next two theorems, we first establish that, with high probability, the SS-ESA procedure does not terminate before reaching the near-optimal model index given by
\begin{align}
    \tkopt(\tau)
    :=\inf\cbr{\k\in[M_n]:\cL_{n}(\ttheta_{n,\k})\le (1+\tau)\cL_{n}(\ttheta_{n,\k+1})}\wedge M_n,
\end{align}
provided that $\tau>0$ is sufficiently large, and then we derive an associated oracle inequality.

\begin{theorem}[Not-too-early-stopping of SS-ESA]
\label{thm:selection_ss}
Assume that the training and aggregation samples are independent. Then conditional on the training sample,  under \cref{assume:loss}, there exist absolute constants $\ttau_*>0$ and $\tC_8>0$ such that
        \begin{align}
       \Pstar\del[1]{\tsm\ge \tkopt(\ttau_*)+1 }\ge 1-\teta_n
    \end{align}
for any sufficiently large $n\in \bN$, where we let $\teta_n:=\tkopt(\ttau_*)\e^{-\tC_8 \cL_{n}(\ttheta_{n,\tkopt(\ttau_*)})}.$
\end{theorem}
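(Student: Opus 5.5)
Conditional on the training sample, the candidates $\ttheta_{n,\k}$ are fixed parameters, so the validation losses $\ell_n(\ttheta_{n,\k})$ are functions of the aggregation sample only. Write $L_\k:=\cL_n(\ttheta_{n,\k})$ and $\sk_0:=\tkopt(\ttau_*)$. The event $\{\tsm\le \sk_0\}$ requires an increase of the validation loss at some step $\k\in\{2,\dots,\sk_0\}$. A union bound therefore gives
\begin{align*}
\Pstar\del{\tsm\le \sk_0}\le \sum_{\k=2}^{\sk_0}\Pstar\del{\lossdiff(\ttheta_{n,\k-1})<\lossdiff(\ttheta_{n,\k})},
\end{align*}
where $\ell_n^\star$ cancels in each difference. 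Along the way we may take $\sk_0\ge2$, since otherwise the claim is trivial.

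For $\k\le \sk_0$, the definition of $\tkopt$ gives $L_{\k-1}>(1+\ttau_*)L_\k$. We bound each summand with a Chernoff argument that combines both parts of \cref{assume:loss}. Pick a threshold $t$ between the two excess risks. Then
\begin{align*}
\Pstar\del{\lossdiff(\ttheta_{n,\k-1})<\lossdiff(\ttheta_{n,\k})}\le \Pstar\del{\lossdiff(\ttheta_{n,\k-1})\le t}+\Pstar\del{\lossdiff(\ttheta_{n,\k})\ge t}.
\end{align*}
For the first term, Markov's inequality applied to $\exp(-\lambda\lossdiff)$ together with the first inequality of \cref{assume:loss} gives the bound $\exp(\lambda t-\xi_1L_{\k-1})$. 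For the second term, Markov's inequality applied to $\exp(\lambda\xi_2\lossdiff)$ together with the second inequality gives $\exp(\xi_3L_\k-\lambda\xi_2t)$.

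Now choose $\lambda t:=\xi_1L_{\k-1}/2$. The first bound becomes $\exp(-\xi_1L_{\k-1}/2)$. The second becomes $\exp(\xi_3L_\k-\xi_1\xi_2L_{\k-1}/2)$, and since $L_\k<L_{\k-1}/(1+\ttau_*)$ this is at most $\exp(-L_{\k-1}\{\xi_1\xi_2/2-\xi_3/(1+\ttau_*)\})$. We take $\ttau_*$ large enough that $\xi_3/(1+\ttau_*)\le \xi_1\xi_2/4$. Then both terms are at most $\exp(-\tC_8 L_{\k-1})$ with $\tC_8:=\xi_1\xi_2/4\wedge \xi_1/2$.

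Finally, the chain $L_1>(1+\ttau_*)L_2>\cdots>(1+\ttau_*)L_{\sk_0}$ gives $L_{\k-1}\ge L_{\sk_0}$ for every $\k\le \sk_0$. Summing at most $\sk_0-1$ terms, each bounded by $2\exp(-\tC_8L_{\sk_0})$, yields the stated bound $\teta_n$ after halving $\tC_8$ to absorb the factor $2$. Halving is harmless because the bound is only nontrivial when $L_{\sk_0}$ is large. Alternatively, we keep a geometric decay to get the sharper bound.

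The main subtlety is the calibration of $t$ and $\ttau_*$ so that the upper-tail bound, which only controls $\lossdiff$ at scale $\xi_3L_\k/(\lambda\xi_2)$, is dominated by the lower-tail scale $\xi_1L_{\k-1}/\lambda$. This is exactly where the multiplicative gap $1+\ttau_*$ is needed. Everything else is conditioning on the training sample so that \cref{assume:loss} applies to the deterministic points $\ttheta_{n,\k}$. This requires $\ttheta_{n,\k}\in\Theta_n$, which is implicit in the setup.
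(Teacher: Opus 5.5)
Your proof is correct and is essentially the paper's argument. Both use a union bound over the steps $\k\le\tkopt(\ttau_*)$, a threshold split of each event $\{\lossdiff(\ttheta_{n,\k-1})<\lossdiff(\ttheta_{n,\k})\}$, Markov's inequality with the two exponential moments of \cref{assume:loss}, and the gap $1+\ttau_*$. The differences are minor: you center the threshold at $\xi_1\cL_n(\ttheta_{n,\k-1})/(2\lambda)$ instead of the paper's $\tC'\cL_n(\ttheta_{n,\k})$, so the gap is spent on the upper-tail term rather than the lower-tail one, and you explicitly absorb the factor $2$ that the paper leaves implicit.

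One caveat applies equally to your proof and the paper's. The union-bound step tacitly requires $\tkopt(\ttau_*)<M_n$, because if $\tkopt(\ttau_*)=M_n$ the event $\{\tsm\le\tkopt(\ttau_*)\}$ occurs through the truncation at $M_n$ alone. The statement should therefore be read with $(\tkopt(\ttau_*)+1)\wedge M_n$ in place of $\tkopt(\ttau_*)+1$.
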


\begin{theorem}[Oracle inequality for SS-ESA estimator]
\label{thm:oracle_ss}
Assume that the function $\theta\mapsto \cL_n(\theta)$ is convex. 
Under the same assumption of \cref{thm:selection_erm}, conditional on the training sample, there exists absolute constants $\tC_9>0$ such that
    \begin{align}
        \Pstar\del[3]{ \cL_n(\esass)\le \tC_9\cbr[2]{\min_{\k\in[\ukopt(\utau_*)+1]}\cL_n(\ttheta_{n,\k})+\log(M_nn)}}
        \ge 1-\teta_n- n^{-1}
    \end{align}
for any sufficiently large $n\in \bN$.
\end{theorem}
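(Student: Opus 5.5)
Conditional on the training sample, the candidates $\ttheta_{n,1},\dots,\ttheta_{n,M_n}$ are deterministic. I would argue on the event of \cref{thm:selection_ss}, on which $\tsm\ge\tkopt(\ttau_*)+1$. (The statement indexes with $\ukopt(\utau_*)$, but in the sample-splitting setting the natural index is $\tkopt(\ttau_*)$, so I read the minimum as over $\k\in[\tkopt(\ttau_*)+1]$; the probability $\teta_n$ supports this reading.) On that event the SS-ESA weights are an exponentially weighted aggregate, with inverse temperature $\alpha$, over a random index set $[\tsm]$ that contains $[\tkopt(\ttau_*)+1]$.

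By convexity of $\theta\mapsto\cL_n(\theta)$ and Jensen's inequality, $\cL_n(\esass)\le\sum_{\k\le\tsm}\omegass\cL_n(\ttheta_{n,\k})$, so it suffices to control the weighted average of the fixed excess risks. I would take $\alpha=\lambda$, or $\alpha\le\lambda$ with the constants of \cref{assume:loss} rescaled accordingly, and use the standard PAC-Bayes/EWA argument on the finite set $[M_n]$.

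Write $\lossdiff(\ttheta_{n,\k})=\ell_n(\ttheta_{n,\k})-\ell_n^\star$. The first line of \cref{assume:loss} and Markov's inequality give, with a union over $\k\in[M_n]$ and probability at least $1-n^{-1}$, the lower bound
\[
\lambda\lossdiff(\ttheta_{n,\k})\ge \xi_1\cL_n(\ttheta_{n,\k})-\log(M_nn)\quad\text{for all }\k.
\]
The second line, applied to the single fixed index $\k^\circ:=\argmin_{\k\le\tkopt(\ttau_*)+1}\cL_n(\ttheta_{n,\k})$, gives the upper bound $\lambda\lossdiff(\ttheta_{n,\k^\circ})\lesssim\cL_n(\ttheta_{n,\k^\circ})+\log n$. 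I would split the $n^{-1}$ budget between these two events.

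To control the weighted average, use that the exponential weights over $[\tsm]$ minimize
\[
\omega\mapsto\sum_\k\omega_\k\lambda\ell_n(\ttheta_{n,\k})+\kl(\omega,\mathrm{Unif}[\tsm])
\]
over the simplex. Comparing with the point mass at $\k^\circ$, which lies in $[\tsm]$ on our event, gives
\[
\sum_\k\omegass\lambda\lossdiff(\ttheta_{n,\k})\le\lambda\lossdiff(\ttheta_{n,\k^\circ})+\log\tsm .
\]
This is the crucial use of not-too-early stopping: the oracle index must be available to the aggregation. Plugging in the lower bound on the left and the upper bound on the right yields
\[
\xi_1\sum_\k\omegass\cL_n(\ttheta_{n,\k})\lesssim\cL_n(\ttheta_{n,\k^\circ})+\log(M_nn),
\]
which is the claim with a suitable $\tC_9$.

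The main obstacle is that $\tsm$ is data dependent, so the set being aggregated is random. I handle this by making the lower bound uniform over all $M_n$ indices, which is where $\log M_n$ enters. The comparison step is deterministic given $\tsm$, and the only requirement on $\tsm$ is the inclusion $\k^\circ\le\tsm$ supplied by \cref{thm:selection_ss}. A union bound with the failure probability $\teta_n$ of that theorem gives overall probability at least $1-\teta_n-n^{-1}$.
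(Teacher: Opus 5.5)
Your proposal is correct and follows essentially the paper's route. Both arguments use convexity and Jensen, then the KL/Gibbs characterization of the exponential weights compared against a point mass at the oracle index inside $[\tsm]$ on the not-too-early event of \cref{thm:selection_ss}, together with Markov-plus-union-bound deviation events from \cref{assume:loss}; the paper's proof also works with $\tkopt(\ttau_*)$, as you read it. The only difference is that the paper makes the upper deviation bound uniform over all $M_n$ indices, whereas you apply it only at the deterministic oracle index $\k^\circ$, which is a harmless refinement.
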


\subsection{Early-stopped aggregation of posterior means}

Another way to avoid explicitly calibrating the penalty term is to exploit a Bayesian principle. We consider a ``Bayesian'' aggregate of point estimators defined as
     \begin{align}
       \esavbmean:=\sum_{\k=1}^{\hsm}\omegaesa\batheta_{n,\k},
       \text{ with }\batheta_{n,\k}:=\int \theta \d\hQ_{n,\k},
    \end{align}
which is equal to the posterior mean of the ESA variational posterior $\esavp$.

\begin{theorem}[Oracle inequality for ESA variational posterior mean]
\label{thm:oracle_vbmean}
    Assume that the function $\theta\mapsto \cL_n(\theta)$ is convex. 
    Under the same assumption of \cref{thm:selection_vb}, there exists an absolute constant $\tC_{10}>0$ such that
    \begin{align}
        \Pstar\del{ \cL_n(\esavbmean)\le \tC_{10}\cbr[1]{\cE_n^*+\log(M_nn)}}
        \ge 1-\eta_n-n^{-1}
    \end{align}
for any sufficiently large $n\in \bN$.
\end{theorem}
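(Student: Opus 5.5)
The plan is to reduce the statement to the contraction result for the ESA variational posterior (\cref{thm:contract_esva}) or, more directly, to its high-probability ingredients, using convexity of $\theta\mapsto\cL_n(\theta)$ to move from a posterior to its mean. By Jensen's inequality applied twice (once over $\hQ_{n,\k}$ and once over the weights $\omegaesa$),
\begin{align*}
\cL_n(\esavbmean)\le \sum_{\k=1}^{\hsm}\omegaesa\cL_n(\batheta_{n,\k})\le \sum_{\k=1}^{\hsm}\omegaesa\int\cL_n(\theta)\d\hQ_{n,\k}(\theta)=\int \cL_n(\theta)\d\esavp(\theta).
\end{align*}
So it suffices to bound the \emph{integrated} excess risk under $\esavp$ with probability at least $1-\eta_n-n^{-1}$. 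A statement of the form ``$\esavp(\cL_n\ge A_n\cdot)\to0$'' is too weak for this, so I will redo the core argument in integrated form.

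\textbf{Step 1.} Work on the event $\{\hsm\ge\kopt(\tau_*)+1\}$, which has probability at least $1-\eta_n$ by \cref{thm:selection_vb}. On it, the models $\kopt$ and $\kopt+1$ are among the aggregated ones, so some $\k_0\in[\hsm]$ has $\cE_n(\k_0)=\cE_n^*$.

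\textbf{Step 2.} Apply the Donsker--Varadhan formula (\cref{lemma:variational_formula}) to the mixture. Since $\esavp$ is an exponentially weighted mixture with weights proportional to $\exp(-\mvfe(\k))$, its free energy relative to the prior $\Pi_{n,\le\hsm}$ satisfies
\begin{align*}
\lambda\int\ell_n\d\esavp+\kl(\esavp,\Pi_{n,\le\hsm})\le -\log\sum_{\k\le\hsm}e^{-\mvfe(\k)}+\log\hsm\le \mvfe(\k_0)+\log M_n .
\end{align*}
Combining this with the variational formula for $\int \{\xi_1\cL_n(\theta)-\lambda\lossdiff(\theta)\}\d\esavp$ gives
\begin{align*}
\xi_1\int\cL_n\d\esavp\le \Big[\lambda\int\lossdiff\d\esavp+\kl(\esavp,\Pi_{n,\le\hsm})\Big]+\log\int e^{\xi_1\cL_n-\lambda\lossdiff}\d\Pi_{n,\le\hsm}.
\end{align*}
The last term is a nonnegative random variable whose $\Pstar$-expectation is at most $1$ by the first line of \cref{assume:loss} and Fubini. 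Markov's inequality therefore bounds it by $\log n$ with probability at least $1-n^{-1}$.

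\textbf{Step 3.} It remains to bound $\mvfe(\k_0)-\lambda\ell_n^\star\le\lambda\int\lossdiff\d Q^*_{n,\k_0}+\kl(Q^*_{n,\k_0},\Pi_{n,\k_0})$ using the oracle distribution of \cref{assume:variational_family}. The KL term is at most $\xi_4\cE_n^*$. For the loss term, use the second line of \cref{assume:loss}: by Chernoff's bound and Fubini, $\lambda\xi_2\int\lossdiff\d Q^*\le \log\int e^{\xi_3\cL_n}\d Q^*+t$ with probability at least $1-e^{-t}$, and the first part is at most $\xi_4\cE_n^*$. Choosing $t=\log n$, or absorbing this failure probability into $\eta_n$ as in the proof of \cref{thm:selection_vb} (which already controls these deviations at $\k_0$), yields a bound of order $\cE_n^*$.

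Collecting terms gives $\int\cL_n\d\esavp\le \tC_{10}\{\cE_n^*+\log M_n+\log n\}$, and the first display finishes the proof. I expect the main obstacle to be the probability bookkeeping. The target probability is exactly $1-\eta_n-n^{-1}$, so the deviation event in Step 3 must be shown to be contained in the complement of the event already controlled in \cref{thm:selection_vb}. The alternative is to merge the two Markov steps into a single exponential-moment bound with combined failure probability $n^{-1}$: multiply the bounds $\Pstar e^{-\lambda\lossdiff+\xi_1\cL_n}\le1$ and $\Pstar e^{\lambda\xi_2\lossdiff}\le e^{\xi_3\cL_n}$ via Hölder with suitable exponents. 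I would try the merged version first.
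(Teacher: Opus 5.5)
Your overall route is the paper's. Convexity reduces the claim to bounding $\int\cL_n\,\d\esavp$. You then combine Donsker--Varadhan for $g:=\xi_1\cL_n-\lambda\lossdiff$ against $\Pi_{n,\le\hsm}$, the mixture free-energy bound of \cref{lemma:composite_elbo_max}, the oracle distribution $Q^*_{n,\koracle}$, and a Chernoff bound via \cref{lemma:exp_moment_bound}.

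The gap is in Step~2, where you assert that $\int \e^{g}\,\d\Pi_{n,\le\hsm}$ has $\Pstar$-expectation at most $1$ ``by Fubini''. Fubini gives $\Pstar\sbr{\int \e^{g}\,\d\Pi}\le1$ only for a \emph{fixed} prior $\Pi$. Here $\Pi_{n,\le\hsm}$ is selected by the same sample through the stopping index. Writing $Z_{\k}:=\int \e^{g}\,\d\Pi_{n,\k}$, you are looking at the data-selected average $(\hsm)^{-1}\sum_{\k\le\hsm}Z_{\k}$, whose expectation need not be bounded by $1$. Controlling this randomness of $\hsm$ is exactly what the paper's proof does. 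It union-bounds over the possible values $\sm\in\{\kopt+1,\dots,M_n\}$ of $\hsm$ to get $\Pstar\del{\log\int \e^{g}\,\d\Pi_{n,\le\hsm}\ge t,\ \hsm\ge\kopt+1}\le M_n\e^{-t}$, and then takes $t=\log(2M_nn)$. Equivalently, you can use $(\hsm)^{-1}\sum_{\k\le\hsm}Z_{\k}\le\sum_{\k=1}^{M_n}Z_{\k}$, whose expectation is at most $M_n$. The price is an extra $\log M_n$, which the target $\tC_{10}\{\cE_n^*+\log(M_nn)\}$ absorbs, so your argument goes through once this is repaired.

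On the bookkeeping you flag as the main obstacle, absorbing the Step~3 deviation into $\eta_n$ is not justified as stated. The event $\{\hsm\ge\kopt+1\}$ of \cref{thm:selection_vb} implies nothing about $\int\lossdiff\,\d Q^*_{n,\koracle}$. The proof of that theorem also only treats indices $\k\le\kopt$, while $\koracle$ may equal $\kopt+1$.

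The simple fix, used in the paper, is to split the $n^{-1}$ budget. Run Markov at level $\log(2M_nn)$ in Step~2 and Chernoff at $t=\log(2n)$ in Step~3, so each fails with probability at most $1/(2n)$; the extra constants go into $\tC_{10}$.

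Your H\"older merge also works, with exponents $p=1+\xi_2^{-1}$ and $q=1+\xi_2$. For $X:=\log\int \e^{g}\,\d\Pi_{n,\le\hsm}$ and $Y:=\xi_2\lambda\int\lossdiff\,\d Q^*_{n,\koracle}$, one has $X/p+Y/q=\frac{\xi_2}{1+\xi_2}(X+Y/\xi_2)$ and $\Pstar\sbr{\e^{X/p+Y/q}}\le M_n^{1/p}\e^{\xi_4\cE_n^*/q}$. A single Markov step at level $1/n$ then suffices. Note, however, that this again requires the repaired bound $\Pstar\sbr{\e^{X}}\le M_n$ rather than $1$.

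Finally, take $k_0$ to be the deterministic index $\koracle$ rather than ``some $k_0\in[\hsm]$'', because the Chernoff step needs $Q^*_{n,k_0}$ to be non-random.
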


\section{Proofs for \cref{sec:theory}}

Before giving the proofs, we give several lemmas used in the proofs.

\subsection{Lemmas}

\begin{lemma}[Donsker and Varadhan’s variational formula, e.g., Lemma 2.2 of \citetS{alquier2020concentration}]
\label{lemma:variational_formula}
Let $\Theta$ be a measurable space and let $\Pi\in\cP(\Theta)$. Then for any function $g$  on $\Theta$ with $\int \exp(g(\theta))\d\Pi(\theta)<\infty$, we have
    \begin{equation}
       \sup_{Q\in\cP(\Theta)} \cbr{\int g(\theta)\d Q(\theta)-\kl(Q,\Pi)}=\log\del{\int \exp(g(\theta))\d \Pi(\theta)}.
    \end{equation}
with the convention $\infty-\infty=-\infty$. Moreover, the supremum with respect to $Q$ is attained when $Q=\Pi^{g}$ with $\Pi^{g}(\d\theta)\propto \exp(g(\theta))\Pi(\d\theta)$.
In particular,  for any $Q\in\cP(\Theta)$, any measurable subset $B\subset\Theta$ and any positive constant $a>0$,
    \begin{equation}
    \label{eq:kl_ineq_prob}
        Q(B)\le \frac{1}{a}\cbr{\kl(Q, \Pi)+ \e^a\Pi(B)}.
    \end{equation}
\end{lemma}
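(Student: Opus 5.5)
The plan is to prove the variational identity directly via the Gibbs measure $\Pi^g$, and then specialize $g$ to an indicator to get \eqref{eq:kl_ineq_prob}.

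\textbf{The variational identity.} Set $Z:=\int \exp(g)\d\Pi\in(0,\infty)$ and $\d\Pi^g:=Z^{-1}\exp(g)\d\Pi$. Fix $Q\in\cP(\Theta)$.

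If $\kl(Q,\Pi)=\infty$, the convention $\infty-\infty=-\infty$ makes the objective $-\infty$ (or $-\infty$ plus something not $+\infty$). So such $Q$ never exceed the right-hand side, and we may restrict attention to $Q\ll\Pi$ with $\kl(Q,\Pi)<\infty$.

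For such $Q$ we have $Q\ll\Pi^g$, because $\exp(g)>0$ makes $\Pi$ and $\Pi^g$ mutually absolutely continuous. The chain rule gives
\begin{align*}
\log\frac{\d Q}{\d\Pi^g}=\log\frac{\d Q}{\d\Pi}-g+\log Z \quad Q\text{-a.s.}
\end{align*}
Integrating against $Q$ yields
\begin{align*}
0\le \kl(Q,\Pi^g)=\kl(Q,\Pi)-\int g\d Q+\log Z .
\end{align*}
Rearranging, $\int g\d Q-\kl(Q,\Pi)\le\log Z$, with equality exactly when $Q=\Pi^g$.

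\textbf{Main obstacle.} The only delicate point is the integrability of $g$ under $Q$, which is needed to split the integral above legitimately. I would handle it as follows.
\begin{itemize}
\item The positive part $g_+$ is $Q$-integrable. This follows from the Young/Fenchel inequality $xy\le x\log x - x + e^y$, applied with $x=\d Q/\d\Pi$ and $y=g$, together with $\kl(Q,\Pi)<\infty$ and $Z<\infty$.
\item If instead $\int g_-\d Q=\infty$, the left-hand side equals $-\infty$ and the inequality is trivial.
\end{itemize}
Finally, taking $Q=\Pi^g$ attains the bound, so the supremum equals $\log Z$.

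\textbf{The inequality \eqref{eq:kl_ineq_prob}.} Apply the identity with $g=a\,\ind_B$, which is bounded. This gives
\begin{align*}
aQ(B)-\kl(Q,\Pi)\le \log\bigl(\Pi(B^\complement)+\e^a\Pi(B)\bigr)\le \log\bigl(1+\e^a\Pi(B)\bigr)\le \e^a\Pi(B),
\end{align*}
using $\log(1+x)\le x$ in the last step. Dividing by $a>0$ yields $Q(B)\le a^{-1}\{\kl(Q,\Pi)+\e^a\Pi(B)\}$. This also covers the case $\kl(Q,\Pi)=\infty$ trivially.
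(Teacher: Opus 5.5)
\textbf{Where your proof breaks.} The paper gives no proof of its own; it cites the literature. Your route, identifying $\log Z-\bigl(\int g\,\d Q-\kl(Q,\Pi)\bigr)$ with $\kl(Q,\Pi^g)\ge 0$, is the standard one. Your proof of the upper bound for every $Q$ is correct, including the Fenchel argument for $g_+\in L^1(Q)$. Your derivation of \eqref{eq:kl_ineq_prob} with the bounded function $g=a\ind_B$ is also correct.

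The gap is the last step, ``taking $Q=\Pi^g$ attains the bound''. Evaluating the objective at $\Pi^g$ needs $g\in L^1(\Pi^g)$, i.e.\ $\int g_+\e^{g}\,\d\Pi<\infty$, and this does not follow from $Z<\infty$. Since $\kl(\Pi^g,\Pi)=\int g\,\d\Pi^g-\log Z$, whenever $\int g_+\e^{g}\,\d\Pi=\infty$ you get $\kl(\Pi^g,\Pi)=\infty$. By the very convention $\infty-\infty=-\infty$, the objective at $\Pi^g$ then equals $-\infty$.

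For example, take $\Theta=[1,\infty)$, $\Pi(\d\theta)\propto\e^{-\theta}\theta^{-2}\,\d\theta$ and $g(\theta)=\theta$. Then $Z<\infty$, $\Pi^g(\d\theta)=\theta^{-2}\,\d\theta$, and $\int g\,\d\Pi^g=\int_1^\infty\theta^{-1}\,\d\theta=\infty$. Your own equality analysis shows that any maximizer would have to be $\Pi^g$, so here the supremum is not attained at all. Consequently the lower bound $\sup\ge\log Z$ is left unproved by your argument, and the ``Moreover'' clause cannot be proved under the stated hypothesis alone.

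\textbf{How to repair it.} Approximate instead of attaining. For $N$ large enough that $Z_N:=\int_{\{g\le N\}}\e^{g}\,\d\Pi>0$, put $Q_N:=\Pi^g(\cdot\mid\{g\le N\})$, so $\d Q_N/\d\Pi=Z_N^{-1}\e^{g}\ind_{\{g\le N\}}$. Under $Q_N$, $g$ is bounded above by $N$, and $g_-$ is integrable because $g_-\e^{g}\le\e^{-1}$. Hence $\kl(Q_N,\Pi)=\int g\,\d Q_N-\log Z_N$ is finite, and the objective at $Q_N$ equals $\log Z_N$, which increases to $\log Z$ by monotone convergence. This proves the identity.

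Attainment at $\Pi^g$ should be asserted only under the extra condition $\int g_+\e^{g}\,\d\Pi<\infty$ (e.g.\ $g$ bounded above), where your final step is valid. This costs the paper nothing. Attainment is invoked only for $g=-\xi_1\cL_n$ in the discussion of $\cE_n(\k)$, which is bounded above as soon as the excess risk is nonnegative. Every other use of the lemma in the paper needs only the upper bound you proved.
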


\begin{lemma}
\label{lemma:exp_moment_bound}
Under \cref{assume:loss,assume:variational_family}, 
    \begin{align*}
         \Pstar\sbr{\exp\del[2]{\xi_2\lambda\int \lossdiff(\theta)\d Q^*_{n,\k}(\theta)}}
          \le \exp(\xi_4\cE_n(\k))
    \end{align*}
for any $\k\in[M_n]$ and any sufficiently large $n\in \bN$.
\end{lemma}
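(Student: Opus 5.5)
The plan is to apply Jensen's inequality inside the expectation over the variational distribution, and then control the exponential moment of the excess loss by the second inequality of \cref{assume:loss}. Since $x\mapsto \exp(\xi_2\lambda x)$ is convex and $Q^*_{n,\k}$ is a probability measure,
\begin{align*}
\exp\del[2]{\xi_2\lambda\int \lossdiff(\theta)\d Q^*_{n,\k}(\theta)}\le \int \exp\del{\xi_2\lambda \lossdiff(\theta)}\d Q^*_{n,\k}(\theta).
\end{align*}
Here the oracle distribution $Q^*_{n,\k}$ is deterministic, because it does not depend on the sample. We then take $\Pstar$-expectations and exchange the order of integration by Tonelli's theorem, which is valid since the integrand is nonnegative. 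This gives
\begin{align*}
\Pstar\sbr{\exp\del[2]{\xi_2\lambda\int \lossdiff(\theta)\d Q^*_{n,\k}(\theta)}}\le \int \Pstar\sbr{\exp\del{\lambda\xi_2\{\ell_n(\theta)-\ell_n^\star\}}}\d Q^*_{n,\k}(\theta).
\end{align*}

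Next, for every $\theta\in\Theta_{n,\k}\subset\Theta_n$, the second display of \cref{assume:loss} bounds the inner expectation by $\exp(\xi_3\cL_n(\theta))$. The right-hand side is therefore at most $\int\exp(\xi_3\cL_n(\theta))\d Q^*_{n,\k}(\theta)$. By the first inequality of \cref{assume:variational_family}, this is at most $\exp(\xi_4\cE_n(\k))$, which is the claim. We take $n$ large enough that both assumptions hold.

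The only delicate points are measurability and integrability. We need joint measurability of $(\theta,X^{(n)})\mapsto\ell_n(\theta)$ to apply Tonelli, and we need $\int\lossdiff\,\d Q^*_{n,\k}$ to be well defined, possibly as $+\infty$, in which case the bound still reads correctly. We assume these standing measurability conditions implicitly, as the paper does. Given them, there is no real obstacle; the lemma is a direct chain of Jensen's inequality, Fubini–Tonelli, and the two assumptions.
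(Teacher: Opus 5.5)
Your proposal is correct and follows the paper's proof exactly: Jensen's inequality for $x\mapsto\exp(\xi_2\lambda x)$ under $Q^*_{n,\k}$, exchanging the $\Pstar$-expectation with the $Q^*_{n,\k}$-integral, the second inequality of \cref{assume:loss}, and then the first inequality of \cref{assume:variational_family}. Your extra remarks on measurability and on $Q^*_{n,\k}$ being non-random are appropriate side points that the paper leaves implicit.
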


\begin{proof}
The desired result follows as
    \begin{align*}
       \Pstar\sbr{\exp\del[3]{\xi_2\lambda\int \lossdiff(\theta)\d Q^*_{n,\k}(\theta)}}
        &\le\Pstar\sbr{\int\exp\del[1]{\xi_2\lambda \lossdiff(\theta)}\d Q^*_{n,\k}(\theta)}\\
        &= \int\sP_{\star}^{(n)}\sbr{\exp\del[1]{\xi_2\lambda\lossdiff(\theta)}}\d Q^*_{n,\k}(\theta)\\
        &\le  \int\exp(\xi_3 \cL_n(\theta))\d Q^*_{n,\k}(\theta)\\
         &\le \exp(\xi_4\cE_n(\k)),
    \end{align*}    
where we use Jensen's inequality for the first line, Fubini's theorem for the second, \cref{assume:loss} for the third, and \cref{assume:variational_family} for the fourth.
\end{proof}

\begin{lemma}
\label{lemma:composite_elbo_max}
For any $\sm\in[M_n]$, define $ \Pi_{n,\le\sm}:=\sm^{-1}\sum_{\k=1}^{\sm}\Pi_{n,\k}$ and
    \begin{align*}
        \hQ_{n, \le \sm}:=\sum_{\k=1}^{\sm}\homega_{n,\k}^{\le \sm} \hQ_{n,\k}
    \end{align*}
with $\homega_{n,\k}^{\le \sm}:=\exp(-\mvfe(\k))/\sum_{\k'=1}^{\sm}\exp(-\mvfe(\k'))$. Then we have
    \begin{align*}
      \vfe(  \hQ_{n, \le \sm}; \Pi_{n,\le\sm})
      &\le \log(\sm) + \inf_{(\tomega_1,\dots, \tomega_{\sm})\in\Delta_m}\sum_{\k=1}^{\sm}\tomega_{\k}\log\del{\frac{\tomega_{\k}}{\exp(-\mvfe(\k))  }}
      \\
      &\le \log(\sm)+    \min_{\k\in[\sm]} \mvfe(\k).
      \end{align*}
\end{lemma}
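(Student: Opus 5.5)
We need to prove Lemma (composite elbo max). Here $\vfe(Q;\Pi)$ denotes $\lambda\int\ell_n\,\d Q+\kl(Q,\Pi)$ with prior $\Pi$. The plan is to reduce everything to the mixture structure: both the prior $\Pi_{n,\le\sm}$ and the aggregate $\hQ_{n,\le\sm}$ are mixtures over the same index set $[\sm]$. We therefore compare the KL divergence between two mixtures with the KL divergence between the joint laws on (index, parameter).

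First we note that the fitted loss term is linear in $Q$:
\[
\lambda\int \ell_n\,\d\hQ_{n,\le\sm}=\sum_{\k}\homega_{n,\k}^{\le\sm}\,\lambda\int\ell_n\,\d\hQ_{n,\k}.
\]

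For the KL term we use the chain rule, or equivalently the joint convexity of KL (log-sum inequality). Consider the joint distributions $\tilde Q(\k,\d\theta)=\homega_{n,\k}^{\le\sm}\hQ_{n,\k}(\d\theta)$ and $\tilde\Pi(\k,\d\theta)=\sm^{-1}\Pi_{n,\k}(\d\theta)$. Their $\theta$-marginals are $\hQ_{n,\le\sm}$ and $\Pi_{n,\le\sm}$. Since KL can only decrease under marginalization (data processing),
\[
\kl(\hQ_{n,\le\sm},\Pi_{n,\le\sm})\le \kl(\tilde Q,\tilde\Pi)=\sum_\k \homega_{\k}\log(\sm\homega_\k)+\sum_\k\homega_\k\kl(\hQ_{n,\k},\Pi_{n,\k}).
\]
The embeddings between nested models cause no difficulty here, since all priors are viewed on $\Theta_n$.

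Adding the two pieces gives
\[
\vfe(\hQ_{n,\le\sm};\Pi_{n,\le\sm})\le\log\sm+\sum_\k\homega_\k\log\homega_\k+\sum_\k\homega_\k\mvfe(\k)=\log\sm+\sum_\k\homega_\k\log\frac{\homega_\k}{\exp(-\mvfe(\k))}.
\]

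It remains to identify this with the infimum over the simplex. The map $\tomega\mapsto\sum_\k\tomega_\k\log(\tomega_\k/e^{-\mvfe(\k)})$ equals
\[
\kl\bigl(\tomega,\homega\bigr)-\log\sum_{\k'}e^{-\mvfe(\k')}.
\]
It is therefore minimized exactly at $\tomega=\homega^{\le\sm}$, the Gibbs weights; this is a discrete instance of \cref{lemma:variational_formula}. This gives the first inequality, which in fact holds with equality on the right-hand side.

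For the second inequality, plug in the point mass $\tomega=e_{\k}$ for each $\k\in[\sm]$ (with $0\log0=0$); this gives the value $\mvfe(\k)$, and we take the minimum over $\k$.

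The only delicate step is justifying the marginalization inequality for KL in general measurable spaces, including the absolute-continuity bookkeeping when some $\kl(\hQ_{n,\k},\Pi_{n,\k})=\infty$, in which case the bound is trivial. I would cite the chain rule, or joint convexity, of KL divergence for this.
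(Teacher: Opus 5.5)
Your proposal is correct and follows essentially the same route as the paper. The paper cites Lemma 6.1 of Ch\'erief-Abdellatif for the mixture bound $\kl(\sum_{\k}\tomega_{\k}\hQ_{n,\k},\Pi_{n,\le\sm})\le\sum_{\k}\tomega_{\k}\log(\sm\tomega_{\k})+\sum_{\k}\tomega_{\k}\kl(\hQ_{n,\k},\Pi_{n,\k})$, which your data-processing argument on the joint law of (index, parameter) proves directly. It applies this bound for arbitrary $\tomega\in\Delta_{\sm}$, observes that the result is minimized at the Gibbs weights $\homega^{\le\sm}$, and then uses point masses for the second inequality. Your identity $\sum_{\k}\tomega_{\k}\log(\tomega_{\k}/\e^{-\mvfe(\k)})=\kl(\tomega,\homega^{\le\sm})-\log\sum_{\k'}\e^{-\mvfe(\k')}$ simply makes the paper's ``is minimized when'' step explicit.
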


\begin{proof}
By    Lemma 6.1 of \citetS{cherief2018consistency}, for any $\tomega=(\tomega_1,\dots, \tomega_{\sm})\in\Delta_{\sm}$, we have
    \begin{align*}
        \kl\del[3]{\sum_{\k=1}^{\sm}\tomega_{\k} \hQ_{n,\k}, \frac{1}{\sm}\sum_{\k=1}^{\sm}\Pi_{n,\k}}
        &\le \sum_{\k=1}^{\sm}\tomega_{\k}\log(\sm\tomega_{\k}) + \sum_{\k=1}^{\sm}\tomega_{\k}\kl\del[1]{\hQ_{n,\k}, \Pi_{n,\k}}\\
        &= \log(\sm)+ \sum_{\k=1}^{\sm}\tomega_{\k}\cbr{\log(\tomega_{\k}) + \kl\del[1]{\hQ_{n,\k}, \Pi_{n,\k}}}.
    \end{align*}
This implies that
    \begin{align*}
        \vfe\del[3]{\sum_{\k=1}^{\sm}\tomega_{\k} \hQ_{n,\k}; \Pi_{n,\le\sm}}
        &=\kl\del[3]{\sum_{\k=1}^{\sm}\tomega_{\k} \hQ_{n,\k}, \frac{1}{\sm}\sum_{\k=1}^{\sm}\Pi_{n,\k}}
        +\sum_{\k=1}^{\sm}\tomega_{\k}\int\{\lambda\ell_n(\theta)\}\d\hQ_{n,\k}(\theta)\\
        &   \le \log(\sm) +\sum_{\k=1}^{\sm}\tomega_{\k}\sbr{\log(\tomega_{\k}) +\kl\del[1]{\hQ_{n,\k}, \Pi_{n,\k}}+\int \{\lambda\ell_n(\theta)\}\d\hQ_{n,\k}(\theta)}\\
         &  =  \log(\sm) + \sum_{\k=1}^{\sm}\tomega_{\k} \{\log(\tomega_{\k}) +\mvfe(\k)\}\\
         &=   \log(\sm) +\sum_{\k=1}^{\sm}\tomega_{\k}\log\del{\frac{\tomega_{\k}}{\exp(-\mvfe(\k))  }}
        \end{align*}
is minimized when $\tomega_{\k}=\homega_{n,\k}^{\le \sm}\propto\exp(-\mvfe(\k)) $. This proves the first desired inequality. The second ineqaulity is attained when we set $\tomega_{\k}=1$ and $\tomega_{\k'}=0$ for any $\k'\neq \k$.
\end{proof}

\subsection{Proof of \cref{prop:risk_bound_vb}}
\begin{proof}
Let $g(\theta):=\xi_1\cL_n(\theta)-\lambda\lossdiff(\theta)$. Then we have
    \begin{align*}
        \Pstar[\exp(g(\theta))]=\exp(\xi_1\cL_n(\theta))\Pstar[\exp(-\lambda\lossdiff(\theta))]\le 1
    \end{align*}
for any $\theta$. Then by Donsker and Varadhan’s variational formula (\cref{lemma:variational_formula}),
    \begin{align*}
        \int \xi_1\cL_n(\theta)\d Q(\theta)
        \le \int \lambda\lossdiff(\theta)\d Q(\theta)+ \kl(Q, \Pi_{n,\k})
    \end{align*}
everywhere for any distribution $Q\in \cP(\Theta_{n,\k})$. Therefore, the above inequality holds also for the variational posterior $\hQ_{n,\k}$. Then, by the optimization optimality of $\hQ_{n,\k}$, we have
    \begin{align*}
       \Pstar\sbr{ \int \xi_1\cL_n(\theta)\d \hQ_{n,\k}(\theta)}
       & \le \Pstar\sbr{  \int \lambda\lossdiff(\theta)\d \hQ_{n,\k}(\theta)
        +\kl(\hQ_{n,\k}(\theta), \Pi_{n,\k})}\\
        &\le \Pstar\sbr{ \int \lambda\lossdiff(\theta)\d Q_{n,\k}^*(\theta)
        +\kl(Q_{n,\k}^*, \Pi_{n,\k})}\\
        &=\int \lambda \cL_n(\theta)\d Q_{n,\k}^*(\theta)
        +\kl(Q_{n,\k}^*, \Pi_{n,\k}).
    \end{align*}
By  \cref{assume:variational_family}, we have $\kl(Q_{n,\k}^*, \Pi_{n,\k})\le \xi_4\cE_n(\k)$ and
    \begin{align}
    \label{eq:approx_bound}
        \int \lambda\cL_n(\theta)\d Q_{n,\k}^*(\theta)
        \le \lambda\xi_3^{-1}\log\del{\int \exp(\xi_3\cL_n(\theta))\d Q_{n,\k}^*(\theta)}\le\lambda\xi_3^{-1}\xi_4\cE_n(\k),
    \end{align}
where the first inequality holds due to
Jensen's inequality. This complete the proof.
\end{proof}

\subsection{Proof of \cref{thm:selection_vb}}

\begin{proof}
Let $\kopt:=\kopt(\tau_*)$ for notational simplicity. For each $\k$, we define
    \begin{align}
    \label{eq:gamma_def}
        \Gamma_n(\k)
        := \mvfe(\k)-\lambda\ell_n^\star
        =\lambda\int \lossdiff(\theta)\d\hQ_{n,\k}(\theta) + \kl(\hQ_{n,\k},\Pi_{n,\k}) 
    \end{align}    
so that our estimator is equivalently written as
    \begin{align*}
        \hsm=\inf\cbr{\k\in[M_n]\setminus\{1\}:
        \Gamma_n(\k-1)< \Gamma_n(\k)}\wedge M_n.
    \end{align*}
We proceed as
    \begin{align*}
       \Pstar\del[1]{\hsm< \kopt+1}
       \le \sum_{\k=1}^{\kopt}\Pstar\del{\hsm= \k}
       \le \sum_{\k=1}^{\kopt}\Pstar\del{ \Gamma_n(\k-1)<\Gamma_n(\k)}.
    \end{align*}
For each summand, we decompose it as
    \begin{align}
   &\Pstar\del{ \Gamma_n(\k-1)<\Gamma_n(\k)}\nonumber\\
       &
       \le  \Pstar\del{ \Gamma_n(\k)\ge \tC'\cE_{n}(\k)}
        + \Pstar\del{ \Gamma_n(\k-1)\le \tC'\cE_{n}(\k)}, \label{eq:elbo_ineq_decomp}
    \end{align}   
where the absolute constant $\tC'>0$ will be chosen later. 
For the second term of the right-hand side of \labelcref{eq:elbo_ineq_decomp}, we note that, by Donsker and Varadhan’s variational formula (\cref{lemma:variational_formula}),
    \begin{align*}
        -\Gamma_n(\k-1)
        &=-\lambda\int \lossdiff(\theta)\d\hQ_{n,\k-1}(\theta) - \kl(\hQ_{n,\k-1},\Pi_{n,\k-1}) \\
        &\le \log\del{\int  \exp\del[1]{-\lambda\lossdiff(\theta)}\d\Pi_{n,\k-1}(\theta)}.
    \end{align*}
Therefore, by Markov's inequality and Fubini's theorem, we have
    \begin{align*}
         \Pstar\del{ \Gamma_n(\k-1)\le \tC'\cE_{n}(\k)}
          &\le \Pstar\del{\log\del{\int  \exp\del[1]{-\lambda\lossdiff(\theta)}\d\Pi_{n,\k-1}(\theta)}\ge -\tC'\cE_{n}(\k)}\\
          &\le \exp(\tC'\cE_{n}(\k))\sP_{\star}^{(n)}\sbr{\int  \exp\del[1]{-\lambda\lossdiff(\theta)}\d\Pi_{n,\k-1}(\theta)}\\
          &= \exp(\tC'\cE_{n}(\k))\int\Pstar \sbr{\exp\del[1]{-\lambda\lossdiff(\theta)}}\d\Pi_{n,\k-1}(\theta)
    \end{align*}
By \cref{assume:loss}, the expectation term of the last display is bounded as
    \begin{align*}
        \int\Pstar \sbr{\exp\del[1]{-\lambda\lossdiff(\theta)}}\d\Pi_{n,\k-1}(\theta)
        &\le  \int \exp(- \xi_1 \cL_n(\theta))\d\Pi_{n,\k-1}(\theta)
        =\exp(-\cE_n(\k-1))
    \end{align*}
Therefore, by using the fact that $\cE_n(\k-1)> (1+\tau_*)\cE_{n}(\k)$ for $\k\in[\kopt]$, we have
    \begin{align}   
        \Pstar\del{ \Gamma_n(\k-1)\le \tC'\cE_{n}(\k)}          
        &\le  \exp\del{\tC'\cE_{n}(\k)- (1+\tau_*)\cE_{n}(\k)}.
        \label{eq:term1}
    \end{align}
Next, we focus on the first term of the right-hand side of \labelcref{eq:elbo_ineq_decomp}. By the optimization optimality of the variational posterior, we have
    \begin{align*}
        \Gamma_n(\k)
        &\le \lambda\int \lossdiff(\theta)\d Q^*_{n,\k}(\theta) + \kl(Q^*_{n,\k},\Pi_{n,\k}) \\
        &\le \lambda\int \lossdiff(\theta)\d Q^*_{n,\k}(\theta) +\xi_4\cE_{n}(\k),
    \end{align*}
where the second inequality follow from \cref{assume:variational_family}. Therefore, by applying Markov's inequality and \cref{lemma:exp_moment_bound} in order, we have
    \begin{align}
         \Pstar&\del{ \Gamma_n(\k)\ge \tC'\cE_{n}(\k)}\nonumber\\
          &\le  \Pstar\del{\xi_2\lambda\int \lossdiff(\theta)\d Q^*_{n,\k}(\theta)  \ge \xi_2(\tC'-\xi_4)\cE_{n}(\k)}\nonumber\\
         &\le   \exp\del[2]{-\xi_2(\tC'-\xi_4)\cE_{n}(\k)}\Pstar\sbr{\exp\del{\xi_2\lambda\int \lossdiff(\theta)\d Q^*_{n,\k}(\theta)}}\nonumber\\
         &\le  \exp\del[2]{-\xi_2(\tC'-\xi_4(1+\xi_2^{-1}))\cE_n(\k)}.    \label{eq:term2}
    \end{align}
Hence, when we take $\tC'$ larger than $\xi_4(1+\xi_2^{-1})$, and take $\tau_*$ larger than $\tC'-1$, we get the following bound by combining \labelcref{eq:term1} and \labelcref{eq:term2}
 \begin{align*}
       \Pstar\del[1]{\hsm< \kopt+1}
       \le\sum_{\k=1}^{\kopt} \exp\del{-\tC''\cE_n(\k)}
       \le \kopt\exp\del{-\tC''\cE_n(\kopt)},
    \end{align*}
which completes the proof.
\end{proof}

\subsection{Proof of \cref{thm:contract_posterior}} 

\begin{proof}
Let $\koracle=\argmin_{\k\in[\kopt(\tau_*)+1]}\cE_n(\k)$, $\fM_n:=\{ X^{(n)}\in\cX_n:\hsm\ge \kopt(\tau_*) +1\}$ and 
    \begin{align*}
        \Xi_n:=\cbr{\theta\in\Theta_n:\cL_n(\theta)\ge A_n(\cE_n^*+\log M_n)}.
    \end{align*}
Then, in view of \cref{thm:selection_vb}, it suffices to bound the quantity $\Pstar\sbr{\esapst\del{\Xi_n}\ind(\fM_n)}$. Define the event
    \begin{align*}
       \text{ $\fA_n$}
       :=\cbr{X^{(n)}\in\cX_n:\int \exp\del[1]{-\lambda\lossdiff(\theta)}\d\Pi_{n,\koracle}(\theta)\ge \exp(-\sqrt{A_n} \cE_n^*)}.
    \end{align*}
Let $Q^*:=Q_{n,\koracle}^*$ which satisfies the condition in \cref{assume:variational_family} for the model $\koracle$. By \cref{lemma:variational_formula}, we have
    \begin{align*}
        \log\del{\int \exp\del[1]{-\lambda\lossdiff(\theta)}\d\Pi_{n,\koracle}(\theta)}
        &\ge \int-\lambda\lossdiff(\theta)\d Q^*(\theta)-\kl(Q^*,\Pi_{n,\koracle})\\
        &\ge\int-\lambda\lossdiff(\theta)\d Q^*(\theta)-\xi_4 \cE_n(\koracle)
    \end{align*}
everywhere, where the second inequality follows from \cref{assume:variational_family}. Hence, by applying Markov's inequality and \cref{lemma:exp_moment_bound} in order, we obtain
    \begin{align*}
       \Pstar\del{\fA_n^\complement}
        &=\Pstar\del{\int \exp\del[1]{-\lambda\lossdiff(\theta)}\d\Pi_{n,\koracle}(\theta)< \exp(-\sqrt{A_n} \cE_n^*)}\\
         &\le  \Pstar\del{\lambda\int \lossdiff(\theta)\d Q^*(\theta)  >(\sqrt{A_n}-\xi_4)\cE_{n}^*}\\
          &\le   \exp\del[2]{-\xi_2(\sqrt{A_n}-\xi_4)\cE_n^*}\Pstar\sbr{\exp\del{\xi_2\lambda\int \lossdiff(\theta)\d Q^*(\theta)}}\\
          &\le  \exp\del[2]{-\xi_2(\sqrt{A_n}-\xi_4(1+\xi_2^{-1}))\cE_n^*}, 
    \end{align*}
which goes to zero as $A_n\to\infty$. Therefore, it suffices to bound the term $\sP_{\star}^{(n)}\sbr{\esapst\del{\Xi_n}\ind(\fM_n\cap \fA_n)}$. We first note that on the event $\fM_n\cap \fA_n$,
    \begin{align*}
        \int \exp\del[1]{-\lambda\lossdiff(\theta)}\d\Pi_{n,\le\hsm}(\theta)
        &= (\hsm)^{-1}\sum_{\k\in[\hsm]}\int \exp\del[1]{-\lambda\lossdiff(\theta)}\d\Pi_{n,\k}(\theta)\\
        &\ge(\hsm)^{-1}\int \exp\del[1]{-\lambda\lossdiff(\theta)}\d\Pi_{n,\koracle}(\theta)\\
&\ge \frac{1}{M_n}\exp(-\sqrt{A_n} \cE_n^*)
    \end{align*}
    This leads to
    \begin{align*}
       \Pstar&\sbr{\esapst\del{\Xi_n}\ind(\fM_n\cap \fA_n)}\\
        &\le M_n\exp(\sqrt{A_n}  \cE_n^*)
       \Pstar\sbr[4]{\sum_{\k\in[\hsm]}\frac{1}{\hsm}\int_{\Xi_n}\exp\del[1]{-\lambda\lossdiff(\theta)}\d\Pi_{n,\k}(\theta)\ind(\fM_n)},
    \end{align*}
where the expectation term is further bounded as
    \begin{align*}
       \Pstar&\sbr[4]{\sum_{\k\in[\hsm]}\frac{1}{\hsm}\int_{\Xi_n}\exp\del[1]{-\lambda\lossdiff(\theta)}\d\Pi_{n,\k}(\theta)\ind(\fM_n)}\\
        &=\sum_{\sm=\kopt+1}^{M_n}\sP_{\star}^{(n)}\sbr[4]{\sum_{\k\in[\hsm]}\frac{1}{\hsm}\int_{\Xi_n}\exp\del[1]{-\lambda\lossdiff(\theta)}\d\Pi_{n,\k}(\theta)\ind(\hsm=\sm)}\\
    &=\sum_{\sm=\kopt+1}^{M_n}\sP_{\star}^{(n)}\sbr[4]{\sum_{\k\in[\sm]}\frac{1}{\sm}\int_{\Xi_n}\exp\del[1]{-\lambda\lossdiff(\theta)}\d\Pi_{n,\k}(\theta)\ind(\hsm=\sm)}\\
    &\le \sum_{\sm=\kopt+1}^{M_n}\sP_{\star}^{(n)}\sbr[4]{\sum_{\k\in[\sm]}\frac{1}{\sm}\int_{\Xi_n}\exp\del[1]{-\lambda\lossdiff(\theta)}\d\Pi_{n,\k}(\theta)}\\
    &\le \sum_{\sm=\kopt+1}^{M_n}\sum_{\k\in[\sm]}\frac{1}{\sm}\int_{\Xi_n}\sP_{\star}^{(n)}\sbr{\exp\del[1]{-\lambda\lossdiff(\theta)}}\d\Pi_{n,\k}(\theta)\\
    &= \sum_{\sm=\kopt+1}^{M_n}\sum_{\k\in[\sm]}\frac{1}{\sm}\int_{\Xi_n}\exp(-\xi_1\cL_n(\theta))\d\Pi_{n,\k}(\theta) \\
    &\le M_n \exp(-\xi_1 A_n(\cE_n^*+\log M_n)).
    \end{align*}
The proof is done since $A_n$ is eventually larger than $\sqrt{A_n}$.
\end{proof}

\subsection{Proof of \cref{thm:contract_esva}}

\begin{proof}
We define the events $\fM_n$ and $\fA_n$, the set $\Xi_n$ and the distribution $Q^*$ as we did in the proof of \cref{thm:contract_posterior}. By the inequality \eqref{eq:kl_ineq_prob} given in \cref{lemma:variational_formula},
    \begin{align*}
       \Pstar&\sbr{ \esavp\del{\Xi_n}\ind(\fM_n)}
        \le \Pstar(\fA_n^\complement)+\sP_{\star}^{(n)}\sbr{ \esavp\del{\Xi_n}\ind(\fM_n\cap \fA_n)}\\
        &\le  \Pstar(\fA_n^\complement)
        +\frac{1}{T_n}\sP_{\star}^{(n)}\sbr{ \kl(\esavp,\esapst)\ind(\fM_n)}
        +\frac{\e^{T_n}}{T_n}\sP_{\star}^{(n)}\sbr{\esapst\del{\Xi_n}\ind(\fM_n\cap \fA_n)},
    \end{align*}
where we set $T_n:=\sqrt{A_n}(\cE_n^*+\log M_n)$. In the proof of \cref{thm:contract_posterior}, we have shown that the first and third terms of the above display tend to zero. Thus, the proof is done if we show that the second term converges to zero. For each $\sm\in[M_n]$,  we define  $\Pi_{n,\le\sm}:=\frac{1}{\sm}\sum_{\k=1}^{\sm}\Pi_{n,\k}$ and
    \begin{align*}
       \bar{\ell}_{n}(\sm):=-\frac{1}{\lambda}\log\del{ \int \exp(-\lambda\ell_n(\theta))\d\Pi_{n,\le\sm}(\theta)}.
    \end{align*}
We start with the identity
    \begin{align}
         &\sP_{\star}^{(n)}\sbr{ \kl(\esavp,\esapst)\ind(\fM_n)}
         =        \Pstar\sbr{\int\log\del{\frac{ \exp(-\lambda\bar{\ell}_{n}(\hsm))\d \esavp(\theta)}{\exp(-\lambda\ell_n(\theta))\d\Pi_{n,\le\hsm}(\theta)}}\d \esavp(\theta)\ind(\fM_n)} \nonumber \\
            &=\Pstar\sbr{\cbr{\kl(\esavp,\Pi_{n,\le\hsm})
         +\lambda\int \ell_n(\theta)\d \esavp(\theta)}\ind(\fM_n)
         -\lambda\bar{\ell}_{n}(\hsm)\ind(\fM_n)} \nonumber\\
          &=\Pstar\sbr{\cbr{\kl(\esavp,\Pi_{n,\le\hsm})
         +\lambda\int \lossdiff(\theta)\d \esavp(\theta)}\ind(\fM_n)}
         +\Pstar\sbr{-\lambda(\bar{\ell}_{n}(\hsm)-\ell_n^\star)\ind(\fM_n)} \label{eq:variational_error},
    \end{align}
We will bound two terms in \labelcref{eq:variational_error} separately. For the first term, we note that, on the event $\fM_n$, \cref{lemma:composite_elbo_max} implies that
    \begin{align*}
     \kl(\esavp,\Pi_{n,\le\hsm})
         +\lambda\int \lossdiff(\theta)\d \esavp(\theta)
       & \le \min_{\k\in[\hsm]}\{\mvfe(\k)\}-\lambda\ell_n(\theta) + \log(\hsm)\\
       &\le\mvfe(\koracle)-\lambda\ell_n(\theta) +\log (M_n)\\
       &\le   \kl(Q^*,\Pi_{n,\koracle})
         +\lambda\int \lossdiff(\theta)\d Q^*(\theta).
    \end{align*}
Therefore, we have
    \begin{align*}
      \Pstar&\sbr{\cbr{\kl(\esavp,\Pi_{n,\le\hsm})
         +\lambda\int \lossdiff(\theta)\d \esavp(\theta)}\ind(\fM_n)}\\
      &\le\Pstar\sbr{ \kl(Q^*,\Pi_{n,\koracle})
         +\lambda\int \lossdiff(\theta)\d Q^*(\theta)}+\log (M_n)\\
         &= \int\sP_{\star}^{(n)}\sbr{\lambda\lossdiff(\theta)}\d Q^*(\theta)+\kl(Q^*,\Pi_{n,\koracle})+\log (M_n)\\
          &\le  \int \lambda\cL_n(\theta)\d Q^*(\theta)+\xi_4\cE_n(\koracle)+\log (M_n)\\
          &\le  (\lambda\xi_3^{-1}+1)\xi_4\cE_n(\koracle)+\log (M_n)
    \end{align*}
where we use the inequality derived in \eqref{eq:approx_bound} in the proof of \cref{prop:risk_bound_vb} for the second inequality. 
Next, for the second term in \labelcref{eq:variational_error}, we have
  \begin{align*}
  \Pstar\sbr{-\lambda(\bar{\ell}_{n}(\hsm)-\ell_n^\star)\ind(\fM_n)}
   &\le   \Pstar\sbr{\log(1+\exp(-\lambda(\bar{\ell}_{n}(\hsm)-\ell_n^\star)))\ind(\fM_n)}\\
      &\le   \Pstar\sbr{\log(1+\exp(-\lambda(\bar{\ell}_{n}(\hsm)-\ell_n^\star)))}\\
   &\le   \log\del{1+\Pstar\sbr{\exp(-\lambda(\bar{\ell}_{n}(\hsm)-\ell_n^\star)))}}\\
     &\le   \log\del{ 1+\sum_{\sm=1}^{M_n}\sP_{\star}^{(n)}\sbr{\exp(-\lambda(\bar{\ell}_{n}(\sm)-\ell_n^\star)))}}\\
   &=   \log\del{ 1+\sum_{\sm=1}^{M_n}\sP_{\star}^{(n)}\sbr{\int \exp(-\lambda(\ell_n(\theta)-\ell_n^\star))\d\Pi_{n,\le\sm}(\theta)}}\\
     &\le\log\del{ 1+\sum_{\sm=1}^{M_n}\int \exp(-\xi_1 \cL_n(\theta))\d\Pi_{n,\le\sm}(\theta)}\\
     &\le \log (1+ M_n),
    \end{align*}
where the first inequality follows from the inequality $z\le \log(1+\exp(z))$ for $z\in \R$, the second one does from the nonnegativity of $\log(1+z)$ for $z>0$ and the third one does from Jensen's inequality. This completes the proof.
\end{proof}

\subsection{Proof of \cref{thm:overestimation}}
\begin{proof}
Let $\Gamma_{n}$ be the quantity defined in \eqref{eq:gamma_def}. Let $\kover:=\kover(\nu^*)$. Then the event $\{\hsm >\kover+1\}$ implies that  $\Gamma_{n}(\k)$ is monotonically decreasing until $\kover+1$. Therefore, we have
    \begin{align*}
         \Pstar\del{\hsm >\kover+1}
          &\le \Pstar\del{\Gamma_n(\koracle)>\Gamma_n(\kover+1)}, 
    \end{align*}
where $\koracle=\argmin_{\k\in[\kopt(\tau_*)+1]}\cE_n(\k)$.
Due to \cref{thm:selection_vb}, it suffices to bound the first term of the last display. By employing  a similar argument used in the proof of \cref{thm:selection_vb}, we have
    \begin{align*}
        \Pstar\del{\Gamma_n(\koracle)>\Gamma_n(\kover+1)}
          &\le    \Pstar\del{\Gamma_n(\koracle)\ge \tC'_1\cE_n^*}
          + \Pstar\del{\Gamma_n(\kover+1)\le \tC'_1\cE_n^*}\\
          &\le    \exp(-\tC'_2\cE_n^*)  + \Pstar\del{\Gamma_n(\kover+1)\le \tC'_1\cE_n^*}
    \end{align*}    
for some constants $\tC'_1,\tC'_2>0$. To bound the second term of the last display, we use Donsker and Varadhan’s variational formula (\cref{lemma:variational_formula}) and Markov inequality to have
    \begin{align*}
        \Pstar\del{\Gamma(\kover+1)\le \tC'_1\cE_n^*}
          &\le \exp(\tC'_1\cE_n^*)\int\Pstar \sbr{\exp\del[1]{-\lambda\lossdiff(\theta)}}\d\Pi_{n,\kover+1}(\theta)\\
          &\le    \exp(\tC'_1\cE_n^*)\int \exp(-\xi_1\cL_n(\theta))\d\Pi_{n,\kover+1}(\theta)\\
          &=\exp(\tC'_1\cE_n^*-\cE_n(\kover+1)),
    \end{align*}
where the second line follows by \cref{assume:loss}.Thus, by the definition of $\kover$, we have the desired result for sufficiently large $\nu^*$.
\end{proof}

\section{Proofs for \cref{sec:eb}}

Before giving the proofs, we state one lemma which is repeated in the proofs.

\begin{lemma}
\label{lemma:eb_lossdiff_bound}
Under \cref{assume:loss,assume:hyperpar}, we have
    \begin{align*}
         \Pstar\sbr{\int  \exp\del[1]{-\lambda\barho^{-1}\lossdiff(\theta)}\d\Pi_{n,\k}(\theta|\cpsi_{n,\k})}\le  \exp(-(2\barho)^{-1}\ccE_n(\k))
    \end{align*}
for any $\k\in[M_n]$ and  any sufficiently large $n\in \bN$.
\end{lemma}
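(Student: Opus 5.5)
The difficulty is that $\cpsi_{n,\k}$ is data dependent, so we cannot simply swap expectation and prior integral as in the fixed-prior case. The plan is to replace the random hyperparameter by a point of a finite Rényi-type cover, paying a change-of-measure cost via Hölder's inequality, and then take a union bound over the cover. By \cref{assume:hyperpar}, fix a $(1+\rho)$-Rényi-type cover $\Psib$ of $\Pi[\Psi]_{n,\k}$ with precision $R_{n,\k}$ and $\log|\Psib|\le U_{n,\k}$. For the (random) $\cpsi_{n,\k}$, select $\psib\in\Psib$ with $\renyihp(\cpsi_{n,\k},\psib)\le R_{n,\k}$; this selection can be made measurably, since $\Psib$ is finite.

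For fixed $\psi,\psib$, write $f(\theta):=\exp(-\lambda\barho^{-1}\lossdiff(\theta))$ and $r:=\d\Pi_{n,\k}(\cdot|\psi)/\d\Pi_{n,\k}(\cdot|\psib)$. Hölder's inequality with exponents $(1+\rho,\barho)$ under $\Pi_{n,\k}(\cdot|\psib)$ gives
\begin{align*}
\int f\,\d\Pi_{n,\k}(\cdot|\psi)=\int r f\,\d\Pi_{n,\k}(\cdot|\psib)
\le \del{\int r^{1+\rho}\d\Pi_{n,\k}(\cdot|\psib)}^{1/(1+\rho)}\del{\int f^{\barho}\d\Pi_{n,\k}(\cdot|\psib)}^{1/\barho}.
\end{align*}
The first factor equals $\exp(\frac{\rho}{1+\rho}\renyihp(\psi,\psib))=\exp(\barho^{-1}\rho\,\renyihp(\psi,\psib))$, so it is bounded by $\exp(\barho^{-1}\rho R_{n,\k})$. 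The second factor is $(\int \exp(-\lambda\lossdiff)\d\Pi_{n,\k}(\cdot|\psib))^{1/\barho}$. Since $\barho>1$, this exponent is exactly what makes the rescaled learning rate $\lambda/\barho$ compatible with \cref{assume:loss}.

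Next, bound the second factor by a maximum over $\psib\in\Psib$ of $Z_{\psib}:=\int \exp(-\lambda\lossdiff)\d\Pi_{n,\k}(\cdot|\psib)$. Then take expectations and use Jensen's inequality, since $z\mapsto z^{1/\barho}$ is concave:
\begin{align*}
\Pstar\sbr{\max_{\psib}Z_{\psib}^{1/\barho}}\le \Pstar\sbr{\sum_{\psib}Z_{\psib}}^{1/\barho}\le \Big(|\Psib|\max_{\psib}\exp(-\cE_{\psib})\Big)^{1/\barho}.
\end{align*}
Here Fubini's theorem and \cref{assume:loss} give $\Pstar Z_{\psib}\le \int\exp(-\xi_1\cL_n)\d\Pi_{n,\k}(\cdot|\psib)=:\exp(-\cE_{\psib})$. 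The definition of $\ccE_n(\k)$ as an infimum over $\psi$ gives $\cE_{\psib}\ge\ccE_n(\k)$.

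Combining the two bounds, the expectation is at most $\exp(\barho^{-1}(\rho R_{n,\k}+U_{n,\k}-\ccE_n(\k)))$. By \cref{assume:hyperpar}, $\rho R_{n,\k}+U_{n,\k}\le(2\barho)^{-1}\ccE_n(\k)\le \ccE_n(\k)/2$, because $\barho>1$. The exponent is therefore at most $-(2\barho)^{-1}\ccE_n(\k)$, as claimed.

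The main obstacle is the Hölder step. One must check that the Rényi moment identity produces exactly the factor $\exp(\barho^{-1}\rho\renyihp)$ with the paper's normalization of $\cD_{1+\rho}$, and that the $1/\barho$ power passes through the maximum and the expectation in the correct direction.
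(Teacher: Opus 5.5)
Your argument is essentially the paper's. Both use the R\'enyi cover from \cref{assume:hyperpar}, H\"older with exponents $(1+\rho,\barho)$ under $\Pi_{n,\k}(\cdot|\psib)$, Jensen for $z\mapsto z^{1/\barho}$, Fubini with \cref{assume:loss}, and $\ccE_n(\k)$ as an infimum over $\psi$.

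The only difference is the bookkeeping over the cover. The paper inserts the indicators $\ind(\cpsi_{n,\k}\in\cB_{n,\k}(\psib))$ and sums outside the $1/\barho$ power, paying $|\Psib|\exp(\rho R_{n,\k})$. You dominate pointwise by $\bigl(\sum_{\psib}Z_{\psib}\bigr)^{1/\barho}$, which pays only $|\Psib|^{1/\barho}$. Because that majorant does not depend on which cover point is chosen, you do not actually need your measurable selection.

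There is one algebra slip, exactly at the step you flagged. Since $\barho^{-1}=\rho/(1+\rho)$, the first H\"older factor is $\exp\bigl(\tfrac{\rho}{1+\rho}\renyihp(\psi,\psib)\bigr)=\exp\bigl(\barho^{-1}\renyihp(\psi,\psib)\bigr)\le\exp(\barho^{-1}R_{n,\k})$. It is not $\exp\bigl(\barho^{-1}\rho\,\renyihp(\psi,\psib)\bigr)$. For $\rho<1$ your bound $\exp(\barho^{-1}\rho R_{n,\k})$ is too small, so the exponent $\barho^{-1}(\rho R_{n,\k}+U_{n,\k}-\ccE_n(\k))$ is not justified as written.

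The conclusion still holds. Using $\barho^{-1}R_{n,\k}\le\rho R_{n,\k}$ and $\barho^{-1}U_{n,\k}\le U_{n,\k}$, the expectation is at most
\[
\exp\bigl(\rho R_{n,\k}+U_{n,\k}-\barho^{-1}\ccE_n(\k)\bigr)\le\exp\bigl(-(2\barho)^{-1}\ccE_n(\k)\bigr)
\]
by \cref{assume:hyperpar}. This is also how the paper absorbs the factor, bounding it crudely by $\exp(\rho R_{n,\k})$.

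Alternatively, you can keep your sharper bookkeeping. Then you need only $R_{n,\k}+U_{n,\k}\le\ccE_n(\k)/2$, which \cref{assume:hyperpar} implies once you split into the cases $\rho\ge1$ and $\rho<1$.
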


\begin{proof}
Let $\Psib$ be a minimal $(1+\rho)$-R\'enyi-type hyperparameter cover of $\Pi[\Psi]_{n,\k}$ with precision $R_{n,\k}$ such that $\log(|\Psib|)\le U_{n,\k}$, which exists by assumption. For each $\psib\in\Psib$, define the ball as
    \begin{align*}
        \cB_{n,\k}(\psib):=\cbr{\psi\in\Psi_{n,\k}:\renyihp(\psi,\psi^\bullet)\le R_{n,\k}}.
    \end{align*}
By H\"older's inequality, we have
    \begin{align*}
        &\int  \exp\del[1]{-(\lambda/\barho)\lossdiff(\theta)}\d\Pi_{n,\k}(\theta|\cpsi_{n,\k})\\
        &= \int  \exp\del[1]{-(\lambda/\barho)\lossdiff(\theta)}\frac{\d\Pi_{n,\k}(\theta|\cpsi_{n,\k})}{\d\Pi_{n,\k}(\theta|\psib)}\d\Pi_{n,\k}(\theta|\psib)\\
        &\le  \cbr{\int  \exp\del[1]{-\lambda\lossdiff(\theta)}\d\Pi_{n,\k}(\theta|\psi^\bullet)}^{1/\barho}
       \cbr{ \int\del{ \frac{\d\Pi_{n,\k}(\theta|\cpsi_{n,\k})}{\d\Pi_{n,\k}(\theta|\psib)}}^{1+\rho}\d\Pi_{n,\k}(\theta|\psib)}^{1/(1+\rho)},
    \end{align*}
where the term in the second bracket is bounded as, on the event $\cpsi_{n,\k}\in\cB_{n,\k}(\psib)$, 
    \begin{align*}
         \int\del{ \frac{\d\Pi_{n,\k}(\theta|\cpsi_{n,\k})}{\d\Pi_{n,\k}(\theta|\psib)}}^{1+\rho}\d\Pi_{n,\k}(\theta|\psib)
         =\exp\del[1]{\rho\renyihp(\cpsi_{n,\k},\psib)}
         &\le \exp(\rho R_{n,\k}).
    \end{align*}
Therefore, we have
    \begin{align*}
         &\Pstar\sbr{\ind(\cpsi_{n,\k}\in\cB_{n,\k}(\psib))\int  \exp\del[1]{-(\lambda/\barho)\lossdiff(\theta)}\d\Pi_{n,\k}(\theta|\cpsi_{n,\k})}\\
         &\le \exp(\rho R_{n,\k})\Pstar\sbr{  \cbr{\int  \exp\del[1]{-\lambda\lossdiff(\theta)}\d\Pi_{n,\k}(\theta|\psib)}^{1/\barho}}.
    \end{align*}
The expectation term is bounded as
        \begin{align*}
       \Pstar\sbr{  \cbr{\int  \exp\del[1]{-\lambda\lossdiff(\theta)}\d\Pi_{n,\k}(\theta|\psi^\bullet)}^{1/\barho}}
       &\le  \cbr{\Pstar\sbr{\int \exp\del[1]{-\lambda\lossdiff(\theta)}\d\Pi_{n,\k}(\theta|\psib)}}^{1/\barho}\\
        &\le  \cbr{\int \exp\del[1]{-\xi_1\cL_n(\theta)}\d\Pi_{n,\k}(\theta|\psib)}^{1/\barho}\\
        &\le  \cbr{\sup_{\psi\in\Psi_{n,\k}}\int \exp\del{-\xi_1\cL_n(\theta)}\d\Pi_{n,\k}(\theta|\psi)}^{1/\barho}\\
       &=\exp(-\ccE_n(\k)/\barho),
    \end{align*}
where the first inequality follows from Jensen's inequality, and the second one from \cref{assume:loss}. Hence, since $\log(|\Psib|)+\rho R_{n,\k}\le (2\barho)^{-1}\ccE_n(\k)$ by assumption, we have
    \begin{align*}
    &\Pstar\sbr{\int  \exp\del[1]{-(\lambda/\barho)\lossdiff(\theta)}\d\Pi_{n,\k}(\theta|\cpsi_{n,\k})}\\
        &\le\sum_{\psib\in\Psib}\Pstar\sbr{\ind(\cpsi_{n,\k}\in\cB_{n,\k}(\psib))\int  \exp\del[1]{-(\lambda/\barho)\lossdiff(\theta)}\d\Pi_{n,\k}(\theta|\cpsi_{n,\k})}\\
        &\le |\Psib|\exp(\rho R_{n,\k}-\ccE_n(\k)/\barho)\\
        &\le \exp(-(2\barho)^{-1}\ccE_n(\k)),
    \end{align*}
which completes the proof.
\end{proof}

\subsection{Proof of \cref{thm:selection_eb}}
\begin{proof}
 For notational convenience, we let $\ckopt:=\ckopt(\ctau_*)$. For each $\k$, we define
    \begin{align*}
        \cGamma_n(\k)
        &:= \ebmvfe(\k)-\frac{\lambda}{\barho}\ell_n^\star\\
        &=\frac{\lambda}{\barho}\int \lossdiff(\theta)\d\chQ_{n,\k}(\theta) + \kl(\chQ_{n,\k},\Pi_{n,\k}(\cdot|\cpsi_{n,\k})).
    \end{align*}    
Then our early stopping model index is equivalently written as
    \begin{align*}
        \csm=\inf\cbr{\k\in[M_n]\setminus\{1\}:
        \cGamma_n(\k-1)< \cGamma_n(\k)}\wedge M_n.
    \end{align*}
As we did in the proof of \cref{thm:selection_vb}, it suffices to show that both $I_{n1}:=\Pstar\del{\cGamma_n(\k)\ge \tC'\ccE_{n}(\k)}$ and $I_{n2}:=\sP_{\star}^{(n)}\del{ \cGamma_n(\k-1)\le \tC'\ccE_{n}(\k)}$ tend to zero for each $\k\in[\ckopt]$ for some constant $\tC'>0$. 

We first bound $I_{n2}$. 
By a similar argument based on Donsker and Varadhan's variational formula as in the proof of  \cref{thm:selection_vb}, we have
   \begin{align*}
         &\Pstar\del{ \cGamma_n(\k-1)\le \tC'\ccE_{n}(\k)}\\
         &\le \Pstar\del{-\frac{\lambda}{\barho}\int \lossdiff(\theta)\d\chQ_{n,\k-1}(\theta) - \kl(\chQ_{n,\k-1},\Pi_{n,\k-1}(\cdot|\cpsi_{n,\k-1}))\ge -\tC'\ccE_{n}(\k)}\\
            &\le \Pstar\del{\int  \exp\del[1]{-(\lambda/\barho)\lossdiff(\theta)}\d\Pi_{n,\k-1}(\theta|\cpsi_{n,\k-1})\ge \exp(-\tC'\ccE_{n}(\k))}\\
            &\le  \exp(\tC'\ccE_{n}(\k)))\Pstar\sbr{\int  \exp\del[1]{-(\lambda/\barho)\lossdiff(\theta)}\d\Pi_{n,\k-1}(\theta|\cpsi_{n,\k-1})}\\
             &\le  \exp(\tC'\ccE_{n}(\k))-(2\barho)^{-1}\ccE_n(\k-1))\\
             &\le\exp(-((2\barho)^{-1}(1+\ctau_*)-\tC')\ccE_{n}(\k)),
    \end{align*}
 where the forth inequality follows from \cref{lemma:eb_lossdiff_bound} and the last inequality  from $\ccE_n(\k-1)> (1+\tau_*)\ccE_{n}(\k)$ for $\k\in[\ckopt]$,
 
We now turn our attention to $I_{n1}$. By the optimization optimality of the pair $(\chQ_{n,\k},\cpsi_{n,\k})$ and the nonnegativity of the function $ \Upsilon_{n,\k}$, we have
    \begin{align*}
        \cGamma_n(\k)
         &\le \lambda\barho^{-1}\int \lossdiff(\theta)\d \chQ_{n,\k}(\theta) + \kl(\chQ_{n,\k},\Pi_{n,\k}(\cdot|\cpsi_{n,\k})) + \Upsilon_{n,\k}(\cpsi_{n,\k}) \\
        &\le \lambda\barho^{-1}\int \lossdiff(\theta)\d Q^*_{n,\k}(\theta) + \kl(Q^*_{n,\k},\Pi_{n,\k}(\cdot|\psi^*_{n,\k})) + \Upsilon_{n,\k}(\psi^*_{n,\k}) \\
        &\le \lambda\barho^{-1}\int \lossdiff(\theta)\d Q^*_{n,\k}(\theta) +2\xi_5\ccE_{n}(\k),
    \end{align*}
where the third inequality follows from \cref{assume:eb_family}. Therefore, by applying Markov's inequality and \cref{lemma:exp_moment_bound} in order, we have
    \begin{align*}
         \sP_{\star}^{(n)}\del{ \cGamma_n(\k)\ge \tC'\ccE_{n}(\k)}
          &\le  \Pstar\del{\xi_2\lambda\int \lossdiff(\theta)\d Q^*_{n,\k}(\theta)  \ge \barho \xi_2(\tC'-2\xi_5)\ccE_{n}(\k)}\\
         &\le   \exp\del[2]{-\xi_2\barho(\tC'-2\xi_5)\ccE_{n}(\k)}\Pstar\sbr{\exp\del{\xi_2\lambda\int \lossdiff(\theta)\d Q^*_{n,\k}(\theta)}}\nonumber\\
         &\le  \exp\del[2]{-\xi_2\barho(\tC'-\xi_5(1+(\xi_2\barho)^{-1}))\ccE_n(\k)}.  
    \end{align*}
If we take $\tC'$ and $\ctau_*$ sufficiently large, we have the desired result.
\end{proof}

\subsection{Proof of \cref{thm:contract_eb} }

\begin{proof}
Let $\koracle=\argmin_{\k\in[\ckopt(\ctau_*)+1]}\ccE_n(\k)$, $Q^*:=Q^*_{n,\koracle}$, $\psi^*:=\psi^*_{n,\koracle}$ and $\Pi^*:=\Pi_{n,\koracle}(\cdot|\psi^*)$ for notational simplicity. We define two events $\fM_n:=\{ X^{(n)}\in\cX_n:\csm\ge \ckopt(\ctau_*) +1\}$ and 
    \begin{align*}
       \text{ $\fA_n$}
       :=\cbr{X^{(n)}\in\cX_n:\int \exp\del[1]{-\lambda\barho^{-1}\lossdiff(\theta)}\d\Pi_{n,\koracle}(\theta|\cpsi_{n,\koracle})\ge \exp(-\sqrt{A_n} \ccE_n^*)}.
    \end{align*}
 We also define the subset of the parameter space   
    \begin{align*}
        \Xi_n:=\cbr{\theta\in\Theta_n:\cL_n(\theta)\ge A_n(\ccE_n^*+\log M_n)}.
    \end{align*}
For each $\sm\in[M_n]$, we define an aggregated empirical prior distribution
        \begin{align*}
        \Pi_{n,\le\sm}:=\frac{1}{\sm}\sum_{\k=1}^{\sm}\Pi_{n,\k}(\cdot|\cpsi_{n,\k})
    \end{align*}
up to a model $\sm$. We also define the ESA empirical Bayes posterior as
    \begin{align*}
        \esaebpst(\d\theta):=  
        \frac{\exp(-\lambda\barho^{-1}\ell_n(\theta))\d\Pi_{n,\le\csm}(\theta)}{\int \exp(-\lambda\barho^{-1}\ell_n(\theta'))\d\Pi_{n,\le\csm}(\theta')}.
    \end{align*}
By the inequality \eqref{eq:kl_ineq_prob} given in \cref{lemma:variational_formula},
    \begin{align}
      & \Pstar\sbr{ \esaebvp\del{\Xi_n}\ind(\fM_n)}
        \le \Pstar(\fA_n^\complement)+\sP_{\star}^{(n)}\sbr{ \esaebvp\del{\Xi_n}\ind(\fM_n\cap \fA_n)}\nonumber\\
        &\le  \Pstar(\fA_n^\complement)
        +\frac{1}{T_n}\sP_{\star}^{(n)}\sbr{ \kl(\esavp,\esaebpst)\ind(\fM_n)}
        +\frac{\e^{T_n}}{T_n}\sP_{\star}^{(n)}\sbr{\esaebpst\del{\Xi_n}\ind(\fM_n\cap \fA_n)},
  \label{eq:eb_decompose}
    \end{align}
where we set $T_n:=\sqrt{A_n}(\ccE_n^*+\log M_n)$. We bound  each of three terms in the above display separately.

We first bound the first term in \eqref{eq:eb_decompose}. Note that
    \begin{align*}
        &\log\del{\int \exp\del[1]{-\lambda\barho^{-1}\lossdiff(\theta)}\d\Pi_{n,\koracle}(\theta|\cpsi_{n,\koracle})}\\
        &\ge \int-\lambda\barho^{-1}\lossdiff(\theta)\d \chQ_{n,\koracle}(\theta)-\kl\del{ \chQ^*_{n,\koracle},\Pi_{n,\koracle}(\cdot|\cpsi_{n,\koracle})}\\
         &\ge \int-\lambda\barho^{-1}\lossdiff(\theta)\d \chQ_{n,\koracle}(\theta)
         -\kl\del{ \chQ_{n,\koracle},\Pi_{n,\koracle}(\cdot|\cpsi_{n,\koracle})}-\Upsilon_{n,\koracle}(\cpsi_{n,\koracle})\\
           &\ge \int-\lambda\barho^{-1}\lossdiff(\theta)\d Q^*(\theta)
         -\kl(Q^*,\Pi^*)-\Upsilon_{n,\koracle}(\cpsi^*)\\
        &\ge\int-\lambda\barho^{-1}\lossdiff(\theta)\d Q^*(\theta)-2\xi_5 \ccE_n^*,
    \end{align*}
where we use  \cref{lemma:variational_formula}, the nonnegativity of $\Upsilon_{n,\k}$, the optimization optimality of $(\chQ_{n,\koracle}, \cpsi_{n,\koracle})$, and \cref{assume:eb_family} for the first, third and fourth inequalities, respectively. Hence, by applying Markov's inequality and \cref{lemma:exp_moment_bound} in order, we obtain
    \begin{align*}
       \Pstar\del{\fA_n^\complement}
         &\le  \Pstar\del{\lambda\barho^{-1}\int \lossdiff(\theta)\d Q^*(\theta)  >(\sqrt{A_n}-2\xi_5)\ccE_{n}^*}\\
          &\le   \exp\del[2]{-\xi_2(\sqrt{A_n}-2\xi_5)\ccE_n^*}\Pstar\sbr{\exp\del{\xi_2\lambda\int \lossdiff(\theta)\d Q^*(\theta)}}\\
          &\le  \exp\del[2]{-\xi_2(\sqrt{A_n}-\xi_5(1+2\xi_2^{-1}))\ccE_n^*}, 
    \end{align*}
which goes to zero as $A_n\to\infty$.

For the second term in \eqref{eq:eb_decompose}, we note that, on the event $\fM_n\cap \fA_n$, we have
    \begin{align*}
        \int \exp\del[1]{-\lambda\lossdiff(\theta)}\d\Pi_{n,\le\hsm}(\theta)
        &= (\csm)^{-1}\sum_{\k\in[\hsm]}\int \exp\del[1]{-\lambda\lossdiff(\theta)}\d\Pi_{n,\k}(\theta|\cpsi_{n,\k})\\
        &\ge(\csm)^{-1}\int \exp\del[1]{-\lambda\lossdiff(\theta)}\d\Pi_{n,\koracle}(\theta|\cpsi_{n,\koracle})\\
&\ge \frac{1}{M_n}\exp(-\sqrt{A_n} \ccE_n^*).
    \end{align*}
This leads to
    \begin{align*}
    \Pstar   &\sbr{\esaebpst\del{\Xi_n}\ind(\fM_n\cap \fA_n)}\\
        &\le M_n\exp(\sqrt{A_n}  \cE_n^*)
       \Pstar\sbr[4]{\sum_{\k\in[\csm]}\frac{1}{\csm}\int_{\Xi_n}\exp\del[1]{-\lambda\lossdiff(\theta)}\d\Pi_{n,\k}(\theta|\cpsi_{n,\k})\ind(\fM_n)},
    \end{align*}
where the expectation term is further bounded as
    \begin{align*}
       \Pstar&\sbr[4]{\sum_{\k\in[\csm]}\frac{1}{\csm}\int_{\Xi_n}\exp\del[1]{-\lambda\barho^{-1}\lossdiff(\theta)}\d\Pi_{n,\k}(\theta|\cpsi_{n,\k})\ind(\fM_n)}\\
    &=\sum_{\sm=\ckopt+1}^{M_n}\sP_{\star}^{(n)}\sbr[4]{\sum_{\k\in[\sm]}\frac{1}{\sm}\int_{\Xi_n}\exp\del[1]{-\lambda\barho^{-1}\lossdiff(\theta)}\d\Pi_{n,\k}(\theta|\cpsi_{n,\k})\ind(\csm=\sm)}\\
    &\le \sum_{\sm=\ckopt+1}^{M_n}\sP_{\star}^{(n)}\sbr[4]{\sum_{\k\in[\sm]}\frac{1}{\sm}\int_{\Xi_n}\exp\del[1]{-\lambda\barho^{-1}\lossdiff(\theta)}\d\Pi_{n,\k}(\theta|\cpsi_{n,\k})}.
    \end{align*}
We divide the model indices into $\cK_n:=\{\k\in[M_n]:\ccE_n(\k)\ge \xi_1 A_n(\ccE_n^*+\log M_n)\}$ and $\cK_n^\complement$.  For $\k\in\cK_n$, \cref{lemma:eb_lossdiff_bound} implies that
    \begin{align*}
        \sP_{\star}^{(n)}\sbr{\int_{\Xi_n}\exp\del[1]{-\lambda\barho^{-1}\lossdiff(\theta)}\d\Pi_{n,\k}(\theta|\cpsi_{n,\k})}
        &\le \sP_{\star}^{(n)}\sbr{\int\exp\del[1]{-\lambda\barho^{-1}\lossdiff(\theta)}\d\Pi_{n,\k}(\theta|\cpsi_{n,\k})}\\
        &\le \exp(-(2\barho)^{-1}\ccE_n(\k))\\
        &\le\exp(-(2\barho)^{-1}\xi_1 A_n(\ccE_n^*+\log M_n)\}).
    \end{align*}
On the other hand, for  $\k\in\cK_n^\complement$, by an argument similar to that in the proof of \cref{lemma:eb_lossdiff_bound}, which combines H\"older inequality and the notion of our R\'enyi-type hyperparameter cover $\Psib$, we have 
    \begin{align*}
        \sP_{\star}^{(n)} &\sbr{\int_{\Xi_n}\exp\del[1]{-\lambda\barho^{-1}\lossdiff(\theta)}\d\Pi_{n,\k}(\theta|\cpsi_{n,\k})}\\
         &\le \sum_{\psib\in\Psib}\exp(\rho R_{n,\k})  \cbr{\int_{\Xi_n}  \exp\del{-\xi_1\cL_n(\theta)}\d\Pi_{n,\k}(\theta|\psi^\bullet)}^{1/\barho}\\
          &\le \exp(U_{n,\k}+\rho R_{n,\k})  \exp(-\barho^{-1}\xi_1 A_n(\ccE_n^*+\log M_n))\\
         &\le \exp(-(2\barho)^{-1}\xi_1 A_n(\ccE_n^*+\log M_n)\}),
    \end{align*}
where the second and last inequality follow from he definition of $\Xi_n$ and the assumption that $U_{n,\k}+\rho  R_{n,\k}\le (2\barho)^{-1}\ccE_n(\k)\le (2\barho)^{-1}\xi_1 A_n(\ccE_n^*+\log M_n)$, respectively. In either case, we have the exponentially decaying upper bound, and therefore,
\begin{align*}
       \Pstar\sbr{\esaebpst\del{\Xi_n}\ind(\fM_n\cap \fA_n)}
        &\le (M_n)^2\exp\del[1]{\sqrt{A_n}  \ccE_n^*-(2\barho)^{-1}A_n(\ccE_n^*+\log M_n)},
    \end{align*}
 which tends to 0 since $A_n$ is eventually larger than $\sqrt{A_n}$.

Lastly, we focus on the third term in \eqref{eq:eb_decompose}. For each $\sm\in[M_n]$,  we define
    \begin{align*}
       \bar{\ell}_{n}(\sm):=-\frac{1}{\lambda\barho^{-1}}\log\del{ \int \exp(-\lambda\barho^{-1}\ell_n(\theta))\d\Pi_{n,\le\sm}(\theta)}.
    \end{align*}
Then we have the same decomposition as in \eqref{eq:variational_error}. Bounding the first term in the decomposition is almost similar, so we omit it. For the second term, using a similar argument used in the proof of \cref{thm:contract_posterior}, we have
  \begin{align*}
   \Pstar&\sbr{-\lambda\barho^{-1}(\bar{\ell}_{n}(\csm)-\ell_n^\star)\ind(\fM_n)}\\
        &\le   \log\del{ 1+\sum_{\sm=1}^{M_n}\sP_{\star}^{(n)}\sbr{\int \exp(-\lambda\barho^{-1}\lossdiff(\theta))\d\Pi_{n,\le\sm}(\theta)}}\\
        &=  \log\del{ 1+\sum_{\sm=1}^{M_n}\frac{1}{\sm}\sum_{\k=1}^{\sm}\sP_{\star}^{(n)}\sbr{\int \exp(-\lambda\barho^{-1}\lossdiff(\theta))\d\Pi_{n,\k}(\theta|\cpsi_{n,\k})}}\\
        &\le\log\del{ 1+\sum_{\sm=1}^{M_n}\frac{1}{\sm}\sum_{\k=1}^{\sm}\exp(-(2\barho)^{-1}\ccE_n(\k))}\\
        &\le \log (1+ M_n),
    \end{align*}
where we use \cref{lemma:eb_lossdiff_bound} for the second inequality, which completes the proof.
\end{proof}

\section{Proofs for \cref{sec:erm}}

\subsection{Proof of \cref{prop:risk_bound_erm}}

\begin{proof}
We first divide the parameter space $\Theta_{n,\k}$ into shells as
    \begin{align*}
        \Theta_{n,\k,j}:=\cbr{\theta\in\Theta_{n,\k}:
        2^{j-1}H_{n,\k}\le \xi_1 \cL_n(\theta)+H_{n,\k}< 2^jH_{n,\k} }
    \end{align*}
for $j\in\bN$.  Let $\Thetabb$ be a minimal $\zeta_{n,\k,j}:=(\xi_7n^{-1}\xi_1^{-1}2^{j-1}H_{n,\k})^{1/\kappa}$ cover of the set $\Theta_{n,\k,j}$ with $|\Thetabb|=N_{n,\k,j}$. Note that since $\Theta_{n,\k,j}\subset\{\theta\in\Theta_{n,\k}:
        \cL_n(\theta)\le   \xi_1^{-1}2^jH_{n,\k}\} $ and $\xi_1^{-1}2^jH_{n,\k}=2\xi_7^{-1}n\zeta_{n,\k,j}^\kappa $, \cref{assume:penalty} implies that
    \begin{align*}
         N_{n,\k,j}:=\cN(\zeta_{n,\k,j}, \Theta_{n,\k,j},\fh)\le \exp( H_{n,\k}/4).
    \end{align*}
For an absolute constant $\tC'_1$ we will specify later, we have
        \begin{align*}
        \max_{\theta\in \Theta_{n,\k,j}}\{\tC'\cL_n(\theta)- \lambda\lossdiff(\theta)\}
        &\le \tC'(2^j-1)H_{n,\k} 
        - \min_{\thetab\in \Thetabb}\lambda\lossdiff(\thetab)
        +\xi_1^{-1} \xi_6 \xi_72^{j-1}H_{n,\k} \\
        &\le (4\tC'+1) 2^{j-2}H_{n,\k} - \min_{\thetab\in \Thetabb}\lambda\lossdiff(\thetab),
    \end{align*}
where the second inequality is due to our assumption that $\xi_7<\xi_1/(2\xi_6)$.
Therefore, by the union bound and Markov's inequality, we have
 \begin{align*}
      \Pstar&\del{\max_{\theta\in \Theta_{n,\k,j}}\{\tC'\cL_n(\theta)- \lambda\lossdiff(\theta)\}\ge t}\\
       &\le \sum_{\thetab\in\Thetabb}\Pstar\del{- \lambda\lossdiff(\thetab)\ge t- (4\tC'+1) 2^{j-2}H_{n,\k}}\\
       &\le \exp((4\tC'+1) 2^{j-2}H_{n,\k}-t)  \sum_{\thetab\in\Thetabb}\Pstar\sbr{\exp(- \lambda\lossdiff(\thetab)))}\\
       &\le N_{n,\k,j}\exp((4\tC'+1) 2^{j-2}H_{n,\k}-t-(2^{j-1}-1)H_{n,\k}),
    \end{align*}
where we use \cref{assume:loss} and the fact that $\xi_1\cL_n(\theta)\ge 2^{j-1}H_{n,\k}-H_{n,\k}$ for any $\theta\in\Theta_{n,\k,j}$ for the last inequality. Since $\log (N_{n,\k,j})\le H_{n,\k}/4$, if we take $\tC'=1/4$,  the last display is further bounded as
    \begin{align*}
        \Pstar\del{\max_{\theta\in \Theta_{n,\k,j}}\{\tC'_1\cL_n(\theta)- \lambda\lossdiff(\theta)\}\ge t}
        \le\exp(H_{n,\k}/4 -t-2^{j-3}H_{n,\k}+H_{n,\k}).
    \end{align*}
By the union bound, we further have
    \begin{align}
        &\Pstar\del{\max_{\theta\in \Theta_{n,\k}}\{\tC'_1\cL_n(\theta)- \lambda\lossdiff(\theta)\}\ge t}\nonumber\\
         &\le\sum_{j=1}^\infty \exp(-t-2^{j-3}H_{n,\k}+ H_{n,\k}/4 +H_{n,\k})
          \lesssim\exp(H_{n,\k}-t).
          \label{eq:erm_sup_bound}
    \end{align}
Therefore, by applying the last display with taking $t=\log (\tC''n)+H_{n,\k}$ for sufficiently large $\tC''>0$, we have
   \begin{align*}
        \tC'\cL_n(\htheta_{n,\k})
        &\le \max_{\theta\in \Theta_{n,\k}}\{\tC'\cL_n(\theta)- \lambda\lossdiff(\theta)\} +\lambda\lossdiff(\htheta_{n,\k})\\ 
        &\le \lossdiff(\htheta_{n,\k}) +H_{n,\k}+\log(\tC''n)\\
        &\le \lossdiff(\theta_{n,\k}^*) +H_{n,\k}+\log(\tC''n)
    \end{align*}
with probability at least $1-1/(2n)$, where we denote $\theta^*_{n,\k}=\argmin_{\theta\in \Theta_{n,\k}}\cL_n(\theta)$. In the last display, the second inequality follows the optimization optimality of the ERM.
Moreover, by \cref{assume:loss}, we have
    \begin{align*}
    \Pstar\del{\lambda\xi_2\lossdiff(\theta^*_{n,\k})-\xi_3\cL_n(\theta^*_{n,\k})\le \log (2n)}
    \le \frac{1}{2n}\Pstar\sbr{\exp\del[1]{\lambda\xi_2\lossdiff(\theta^*_{n,\k})-\xi_3\cL_n(\theta^*_{n,\k})}}\le\frac{1}{2n}
    \end{align*}
Therefore, we have $\cL_n(\htheta_{n,\k})\lesssim \cL_n(\theta^*_{n,\k})+\log n$ with probability at least $1-n^{-1}$, which completes the proof.
\end{proof}

\subsection{Proof of \cref{thm:selection_erm}}

\begin{proof}
Let $\ukopt:=\ukopt(\utau_*)$. 
For each $\k$, we define
    \begin{align*}
        \Gamma_n(\k)=\mPER(\htheta_{n,\k})-\lambda \ell_n^\star
        = \lambda\lossdiff( \htheta_{n,\k})+H_{n,\k}
    \end{align*}    
so that our stopping model index can be  equivalently written as
    \begin{align*}
        \usm=\inf\cbr{\k\in[M_n]\setminus\{1\}:
        \uGamma_n(\k-1)< \uGamma_n(\k)}\wedge M_n.
    \end{align*}
As we did in the proof of \cref{thm:selection_vb}, it suffices to establish upper bounds of both $I_{n1}:=\Pstar\del{ \uGamma_n(\k)\ge \tC'\ucE_{n}(\k)}$ and $I_{n2}:=\sP_{\star}^{(n)}\del{ \uGamma_n(\k-1)\le \tC'\ucE_{n}(\k)}$ for some constant $\tC'>0$. 

To bound $I_{n2}$, we divide the parameter space $\Theta_{n,\k-1}$ into shells as
    \begin{align*}
        \Theta_{n,\k-1,j}:=\cbr{\theta\in\Theta_{n,\k-1}:
        2^{j-1}\ucE_n(\k-1) \le \xi_1 \cL_n(\theta)+H_{n,\k-1}< 2^j\ucE_n(\k-1) }
    \end{align*}
for $j\in\bN$. This peeling device is slightly different to what we have used for proving \cref{prop:risk_bound_erm}. Let $\Thetab$ be a minimal $\zeta_{n,\k-1,j}:=(\xi_7n^{-1}\xi_1^{-1}2^{j-1}\ucE_n(\k-1))^{1/\kappa}$ cover of the set $\Theta_{n,\k-1,j}$ with $|\Thetab|=N_{n,\k-1,j}$. Note that since $\Theta_{n,\k-1,j}\subset\{\theta\in\Theta_{n,\k-1}:
        \cL_n(\theta)\le   \xi_1^{-1}2^j\ucE_n(\k-1)\} $ and $\xi_1^{-1}2^j\ucE_n(\k-1)=2\xi_7^{-1}n\zeta_{n,\k-1,j}^\kappa $, \cref{assume:penalty} implies that
    \begin{align*}
         N_{n,\k-1,j}:=\cN(\zeta_{n,\k-1,j}, \Theta_{n,\k-1,j},\fh)\le \exp( H_{n,\k}/4)
    \end{align*}
Now we define the ``prior'' distribution
    \begin{align*}
        \Pi_{n,\k-1,j}:=\frac{1}{N_{n,\k-1,j}}\sum_{\theta^\bullet \in\Thetab}\delta_{\theta^\bullet} 
    \end{align*}
and the ``posterior'' distribution
    \begin{align*}
        \hQ_{n,\k}:=\delta_{\hthetab}
        \text{ with }
        \hthetab=\argmin_{\theta^\bullet\in\Thetab}\cd(\theta^\bullet, \htheta_{n,\k-1}),
    \end{align*}
where $\delta_\theta$ denotes the Dirac-delta measure at $\theta$.
Then, by \cref{assume:penalty}, the KL divergence between the posterior and the prior is bounded by
    \begin{align*}
        \kl(\hQ_{n,\k-1,j},\Pi_{n,\k-1,j})
        =\log N_{n,\k-1,j}\le H_{n,\k-1}.
    \end{align*}
On the event $\htheta_{n,\k-1}\in\Theta_{n,\k-1,j}$, we have that, by the Lipschitz continuity of the loss function,
    \begin{align*}
        \uGamma_n(\k-1)
        &\ge \lambda\lossdiff(\hthetab)+\log(N_{n,\k-1,j})
        - \xi_6n\fh^\kappa(\hthetab,\htheta_{n,\k-1})\\
        &=\int\lambda\lossdiff(\theta)\d\hQ_{n,\k-1,j}(\theta) + \kl(\hQ_{n,\k-1,j},\Pi_{n,\k-1,j}) -\xi_1^{-1}\xi_6 \xi_72^{j-1}\ucE_n(\k-1)\\
        &\ge - \log\del{\int  \exp\del[1]{-\lambda \lossdiff(\theta)}\d\Pi_{n,\k-1,j}(\theta)}-2^{j-2}\ucE_n(\k-1)
    \end{align*}
everywhere, where  the last inequality holds due to Donsker and Varadhan’s variational formula and the assumption that $\xi_7\le \xi_1/(2\xi_6)$. Then by Markov's inequality, we  have
  \begin{align*}   
        &\Pstar\del{ \uGamma_n(\k-1)\le \tC'\ucE_{n}(\k),\htheta_{n,\k-1}\in\Theta_{n,\k-1,j}}      \\
        &\le\Pstar\del{ \log\del{\int  \exp\del[1]{-\lambda \lossdiff(\theta)}\d\Pi_{n,\k-1,j}(\theta)}\ge -\tC'\ucE_{n}(\k)-2^{j-2}\ucE_n(\k-1)}  \\
        &\le \exp(\tC'\ucE_n(\k)+2^{j-2}\ucE_n(\k-1)) \Pstar\sbr{\int  \exp\del[1]{-\lambda \lossdiff(\theta)}\d\Pi_{n,\k-1,j}(\theta)}.
    \end{align*}
Moreover, using \cref{assume:loss} and the fact that $\cL_n(\theta)\ge 2^{j-1}\ucE_n(\k-1)-H_{n,\k-1}$ for any $\theta\in\Theta_{n,\k-1,j}$,
    \begin{align*}   
      \Pstar\sbr{\int  \exp\del[1]{-\lambda \lossdiff(\theta)}\d\Pi_{n,\k-1,j}(\theta)}
           &=\int  \exp\del{-\xi_1\cL_n(\theta)}\d\Pi_{n,\k-1,j}(\theta)\\
         &\le \exp\del{-2^{j-1}\ucE_n(\k-1)+H_{n,\k-1}}.
    \end{align*}
Since $H_{n,\k-1}\le H_{n,\k}$ in our model construction, we have
  \begin{align*}   
        \Pstar\del{ \Gamma_n(\k-1)\le \tC'\ucE_{n}(\k),\htheta_{n,\k-1}\in\Theta_{n,\k-1,j}}      
        &\le \exp((\tC'+1)\ucE_n(\k)-2^{j-2}\ucE_n(\k-1)) .
    \end{align*}
We therefore have
    \begin{align*}
       \sP_{\star}^{(n)}\del{ \uGamma_n(\k-1)\le \tC'\ucE_{n}(\k)}
       &\le\sum_{j=1}^\infty\Pstar\del{ \uGamma_n(\k-1)\le \tC'\ucE_{n}(\k),\htheta_{n,\k-1}\in\Theta_{n,\k-1,j}} \\
       &\le \exp((\tC'_1+1)\ucE_n(\k))\sum_{j=1}^\infty\exp(-2^{j-2}\ucE_n(\k-1)) \\
       &\lesssim \exp((\tC'+1)\ucE_n(\k)-\ucE_n(\k-1)/2) \\
      &\lesssim  \exp((\tC'+1)\ucE_n(\k)-(1+\tau_*)\ucE_n(\k)/2) ,
    \end{align*}
where the last inequality follows from  $\ucE_n(\k-1)\ge (1+\tau_*)\ucE_n(\k)$ for $\k\in[\ukopt]$.

We proceed to bound $I_{n1}$. By the optimization optimality of the ERM, we have
    \begin{align*}
        \uGamma_n(\k)
        &\le \lambda \lossdiff(\theta^*_{n,\k}) + H_{n,\k}
       \le  \lambda \lossdiff(\theta^*_{n,\k}) + \ucE_n(\k),
    \end{align*}
where we denote by $\theta^*_{n,\k}=\argmin_{\theta\in \Theta_{n,\k}}\cL_n(\theta)$. Therefore, by applying Markov's inequality and \cref{assume:loss} in order, we have
    \begin{align*}
        \Pstar\del{ \uGamma_n(\k)\ge \tC'\ucE_{n}(\k)}
          &\le  \Pstar\del{ \lambda \lossdiff(\theta^*_{n,\k}) \ge (\tC'-1)\ucE_n(\k)}\\
         &\le   \exp\del[2]{-\xi_2(\tC'-1)\ucE_{n}(\k)}\Pstar\sbr{\exp\del{\lambda\xi_2\lossdiff(\theta^*_{n,\k})}}\\
           &\le   \exp\del[2]{-\xi_2(\tC'-1)\ucE_{n}(\k)+\xi_3\cL_n(\theta^*_{n,\k}) }\\
         &\le  \exp\del[2]{-\xi_2(\tC'-1-\xi_3/(\xi_2\xi_1))\ucE_n(\k)}.  
    \end{align*}
Hence, if we take $\tC'$ and $\tau_*$ sufficiently large, we get the desired result.
\end{proof}

\subsection{Proof of \cref{thm:oracle_erm}}

\begin{proof}

Let $\ukopt:=\ukopt(\utau_*)$ and $\fM_n:=\{ X^{(n)}\in\cX_n:\usm\ge \ukopt+1 \}$. We also define the event
    \begin{align}
        \text{$\fU_n$}
        :=
        \bigcap_{\k=1}^{M_n}\cbr{X^{(n)}\in\cX_n:\max_{\theta\in \Theta_{n,\k}}\{\tC'_1\cL_n(\theta)- \lambda\lossdiff(\theta)\}\le  H_{n,\k}+\log(\tC'M_nn) }.
    \end{align}
for sufficiently large $\tC'>0$. Then, by \eqref{eq:erm_sup_bound} we have established in the proof of \cref{prop:risk_bound_erm} together with the union bound, we have $\Pstar(\fU_n^\complement)\le 1/(2n) $. On the event $\fU_n$, by the convexity of the excess risk function $\cL_n$, we have
    \begin{align*}
        \cL_n(\esaerm)
        &\le \sum_{k=1}^{\usm}\omegafesa\cL_n(\htheta_{n,\k})\\
           &\lesssim\sum_{k=1}^{\usm}\omegafesa\cbr{\lambda\lossdiff(\htheta_{n,\k}) 
        + H_{n,\k} }+\log(M_nn).
    \end{align*}
Note that Jensen's inequality implies that
    \begin{align*}
       \sum_{k=1}^{\usm}\omegafesa\log(\omegaesa)+\log(\usm) =\sum_{k=1}^{\usm}\omegafesa\log\del{\frac{\omegafesa}{\usm^{-1}}}\le 0.
    \end{align*}
Therefore, we have
    \begin{align*}
       \sum_{k=1}^{\usm}\omegafesa\cbr{\lambda\lossdiff(\htheta_{n,\k}) 
        + H_{n,\k} }
        &\le 
        \sum_{k=1}^{\usm}\omegafesa\cbr{\log(\omegafesa)+\lambda\lossdiff(\htheta_{n,\k}) 
        + H_{n,\k} }+\log(\usm) \\
        &=\sum_{k=1}^{\usm}\omegafesa\log\del{\frac{\omegafesa}{\exp(-\lambda\ell_n(\htheta_{n,\k}) - H_{n,\k} )}}-\lambda\ell_n^\star+\log(\usm).
    \end{align*}
Since $\omegafesa\propto \exp(-\lambda\ell_n(\htheta_{n,\k}) - H_{n,\k})$ is the minimizer of a function 
    \begin{align*}
        (\omega_1,\dots, \omega_{\sm})\mapsto \sum_{k=1}^{\sm}\omega_{\k}\log\del{\frac{\omega_{\k}}{\exp(-\lambda\ell_n(\htheta_{n,\k}) - H_{n,\k} )}},
    \end{align*}
we obtain the following series of inequalities holding on the event $ \fU_n$
   \begin{align*}
        \cL_n(\esaerm)
        &\lesssim \min_{\k\in[\usm]}\cbr{\lambda\lossdiff(\htheta_{n,\k}) +H_{n,\k}  }+\log(M_nn)\\
        &\le \min_{\k\in[\usm]}\cbr{\lambda \lossdiff(\theta_{n,\k}^*) +H_{n,\k}}+\log(M_nn)
    \end{align*}
with $\theta^*_{n,\k}=\argmin_{\theta\in \Theta_{n,\k}}\cL_n(\theta)$, where the second inequality follows the optimization optimality of the ERM and the third one holds since $\ukopt\le \usm$ on the event $\fM_n$. Next, we define the event
     \begin{align*}
     \text{$\fV_n$}
     := \bigcap_{\k=1}^{M_n}\cbr{X^{(n)}\in\cX_n: \lambda\xi_2\lossdiff(\theta^*_{n,\k})-\xi_3\cL_n(\theta^*_{n,\k})\le \log (2M_n n)}.
    \end{align*}
By \cref{assume:loss} and the use of the union bound, we have $\Pstar(\fV_n^\complement)\le 1/n$. Then, on the event $\fM_n \cap \fU_n \cap \fV_n$, we finally have
   \begin{align*}
        \cL_n(\esaerm)
        &\lesssim\min_{\k\in[\usm]}\cbr{\cL_n(\theta_{n,\k}^*) +H_{n,\k}}+\log(M_nn)
        \lesssim \ucE_n^*+\log (M_n n).
    \end{align*}
We complete the proof by noting that the probability of the complement of the event $\fM_n\cap \fU_n \cap \fV_n$ is bounded above by
$\Pstar(\fU_n^\complement)+\Pstar(\fV_n^\complement)+\Pstar(\fM_n^\complement)\le n^{-1}+\ueta_n$.
\end{proof}

\section{Proofs for \cref{appendix:freq_etc}}

\subsection{Proof of \cref{thm:selection_ss}}

\begin{proof}
Let $\tkopt:=\tkopt(\ttau_*)$. Note that our  stopping model index can be  equivalently written as
    \begin{align}
        \tsm=\inf\cbr{\k\in[M_n]\setminus\{1\}:
        \lossdiff(\ttheta_{n,\k-1})< \lossdiff(\ttheta_{n,\k})}\wedge M_n.
    \end{align}
We start with the following series of inequalities
 \begin{align*}
       \Pstar\del[1]{\tsm< \tkopt+1}
       &\le \sum_{\k=1}^{\tkopt}\Pstar\del{\tsm= \k}\\
       &\le \sum_{\k=1}^{\tkopt}\Pstar\del{\lossdiff(\ttheta_{n,\k-1})< \lossdiff(\ttheta_{n,\k})}\\
       &\le \sum_{\k=1}^{\tkopt}\sbr{ \Pstar\del{\lossdiff(\ttheta_{n,\k-1})<\tC'\cL_n(\ttheta_{n,\k})}
         + \Pstar\del{\lossdiff(\ttheta_{n,\k})\ge\tC'\cL_n(\ttheta_{n,\k})}}
    \end{align*}
with some constant $\tC'>0$ we will specify later. The first term in the summand can be bounded as, by \cref{assume:loss},
 \begin{align*}
             \Pstar\del{\lossdiff(\ttheta_{n,\k-1})<\tC'\cL_n(\ttheta_{n,\k})}
            & \le \exp\del{\lambda\tC'\cL_n(\ttheta_{n,\k})}
             \Pstar\sbr{\exp(-\lambda\lossdiff(\ttheta_{n,\k-1})}\\
             &\le\exp\del{\lambda\tC'\cL_n(\ttheta_{n,\k})-\xi_1\cL_n(\ttheta_{n,\k-1})}\\
              &\le\exp\del{\lambda\tC'\cL_n(\ttheta_{n,\k})-\xi_1(1+\ttau_*)\cL_n(\ttheta_{n,\k})}
        \end{align*}
where we use the fact that $\cL_n(\ttheta_{n,\k-1})>(1+\ttau_*)\cL_n(\ttheta_{n,\k})$ for $\k\in[\tkopt]$, and the second term can be bounded as,  by \cref{assume:loss} again,
    \begin{align*}
         \Pstar\del{\lossdiff(\ttheta_{n,\k})\ge\tC'\cL_n(\ttheta_{n,\k})}
        & \le \exp(-\tC'\cL_n(\ttheta_{n,\k}))\Pstar\sbr{\exp(\xi_2\lambda\lossdiff(\ttheta_{n,\k})}\\
         & \le \exp(-(\tC'-\xi_3)\cL_n(\ttheta_{n,\k})).
    \end{align*}
Hence if take sufficiently large $\tC'$ and $\ttau_*$, we get the desired result.
\end{proof}

\subsection{Proof of \cref{thm:oracle_ss}}

\begin{proof}

Let $\tkopt:=\tkopt(\ttau_*)$ and define the event $\fM_n:=\{ X^{(n)}\in\cX_n:\tsm\ge \tkopt+1 \}$. Note that $X^{(n)}$ denotes the aggregation sample. We define two events as
     \begin{align*}
      \text{$\fU_n$}
        &:=
        \bigcap_{\k=1}^{M_n}\cbr{X^{(n)}\in\cX_n:\xi_1\cL_n(\ttheta_{n,\k}) -\lambda\lossdiff(\ttheta_{n,\k})\le  \log(2M_nn) },\\
     \text{$\fV_n$}
     &:= \bigcap_{\k=1}^{M_n}\cbr{ X^{(n)}\in\cX_n:\lambda\xi_2\lossdiff(\ttheta_{n,\k})-\xi_3\cL_n(\ttheta_{n,\k})\le \log (2M_n n)}.
    \end{align*}
Then by \cref{assume:loss} together with the union bound, we have
    \begin{align*}
        \Pstar(\fU_n^\complement)
        \le M_n\e^{-\log(2M_nn)}\Pstar\sbr{\exp\del{\xi_1\cL_n(\ttheta_{n,\k}) -\lambda\lossdiff(\ttheta_{n,\k})}}\le \frac{1}{2n}
    \end{align*}
and
    \begin{align*}
        \Pstar(\fV_n^\complement)
        \le M_n\e^{-\log(2M_nn)}\Pstar\sbr{\exp\del{\lambda\xi_2\lossdiff(\ttheta_{n,\k})-\xi_3\cL_n(\ttheta_{n,\k})}}\le \frac{1}{2n}.
    \end{align*}
On the event $\fU_n$, we have, by the convexity of the excess risk function $\cL_n$, 
    \begin{align*}
        \cL_n(\esass)
        &\le \sum_{k=1}^{\tsm}\omegass\cL_n(\ttheta_{n,\k})\\
        &\lesssim\sum_{k=1}^{\tsm}\omegass\cbr{\alpha\lossdiff(\htheta_{n,\k}) 
         }+\log(M_nn).
    \end{align*}
By Jensen's inequality,
    \begin{align*}
       \sum_{k=1}^{\tsm}\omegass\log(\omegass)+\log(\tsm) =\sum_{k=1}^{\tsm}\omegass\log\del{\frac{\omegass}{\tsm^{-1}}}\le 0.
    \end{align*}
Therefore, we have, on the event $\fU_n$,
    \begin{align*}
      \sum_{k=1}^{\tsm}\omegass\cbr{\alpha\lossdiff(\htheta_{n,\k}) 
         }
        &\le 
        \sum_{k=1}^{\tsm}\omegass\cbr{\log(\omegass)+\alpha\lossdiff(\ttheta_{n,\k}) }+\log(\tsm) \\
        &=\sum_{k=1}^{\tsm}\omegass\log\del{\frac{\omegass}{\exp(-\alpha\ell_n(\ttheta_{n,\k}))}}-\alpha\ell_n^\star+\log(M_n)\\
        &\le \alpha\min_{\k\in[\tsm]}\lossdiff(\ttheta_{n,\k})+\log(M_n).
    \end{align*}
Moreover, on the event $\fM_n \cap \fU_n\cap \fV_n$, we have
  \begin{align*}
        \cL_n(\esass)
        &\lesssim \min_{\k\in[\tsm]}\lossdiff(\ttheta_{n,\k})+\log(M_nn)\\
        &\lesssim \min_{\k\in[\tsm]}\cL_n(\ttheta_{n,\k})+\log(M_nn)\\
         &\le \min_{\k\in[\tkopt+1]}\cL_n(\ttheta_{n,\k})+\log(M_nn),
    \end{align*}
which completes the proof.
\end{proof}

\subsection{Proof of \cref{thm:oracle_vbmean}}
\begin{proof}
 By the convexity of the excess risk function $\cL_n$, we have
    \begin{align*}
        \cL_n(\esaerm)
        \le   \sum_{k=1}^{\hsm}\omegaesa\int \cL_n(\theta) \d\hQ_{n,\k}(\theta)
        =\int \cL_n(\theta)\d\esavp(\theta).
    \end{align*}
Let $\kopt:=\kopt(\tau_*)$ and $\fM_n:=\{ X^{(n)}\in\cX_n:\hsm\ge \kopt+1 \}$. Moreover, let  $g(\theta):=\xi_1\cL_n(\theta)-\lambda\lossdiff(\theta)$. Then for any $t>0$, we have
    \begin{align*}
          \Pstar&\del{  \sup_{Q\in\cP(\Theta_n)} \cbr{\int g(\theta)\d Q(\theta)-\kl(Q,\Pi_{n,\le\hsm})}\ge t, \hsm\ge \kopt+1}\\
          &= \Pstar\del{ \log\del{\int \exp(g(\theta))\d\Pi_{n,\le\hsm}}\ge t,\hsm\ge \kopt+1}\\
          &\le  \sum_{\sm=\kopt+1}^{M_n}\Pstar\del{ \log\del{\int \exp(g(\theta))\d\Pi_{n,\le \sm}}\ge t}\\
          &\le  \exp(-t)\sum_{\sm=\kopt+1}^{M_n}\Pstar\sbr{\exp(\xi_1\cL_n(\theta)-\lambda\lossdiff(\theta))}\\
          &\le M_n\exp(-t),
    \end{align*}
where we use \cref{lemma:variational_formula} for the first line, the union bound for the second, Markov's inequality for the third, and \cref{assume:loss} for the last. As $\esavp\in \cP(\Theta_n)$, the last display implies that, with taking $t=\log(2M_n n)$, we have
    \begin{align*}
        \int \xi_1\cL_n(\theta)\d\esavp(\theta)
        \le \int \lambda\lossdiff(\theta)\d\esavp(\theta)+
        \kl(Q,\Pi_{n,\le\hsm})+\log(M_n n)
    \end{align*}
with probability at least $1-1/(2n)-\P(\fM_n^\complement)$. Moreover, by \cref{lemma:composite_elbo_max}, the last display is further bounded as
    \begin{align*}
     \int \xi_1\cL_n(\theta)\d\esavp(\theta) &\le \inf_{\k\in[\kopt+1]}\cbr{\int  \lambda\lossdiff(\theta)\d \hQ_{n,\k}(\theta)+
        \kl(\hQ_{n,\k},\Pi_{n,\k}) }+2\log (M_nn)\\
        &\le\int  \lambda\lossdiff(\theta)\d \hQ_{n,\koracle}(\theta)+
        \kl(\hQ_{n,\koracle},\Pi_{n,\koracle})+ 2\log (M_nn)\\
        &\le\int  \lambda\lossdiff(\theta)\d Q^*_{n,\koracle}(\theta)+
        \kl(Q^*_{n,\koracle},\Pi_{n,\koracle})+ 2\log (M_nn).
    \end{align*}
Moreover, we have  that
    \begin{align*}
        \Pstar&\del{\int \{\xi_2\lambda\lossdiff(\theta)-\xi_4\cL_n(\theta)\}\d Q^*_{n,\koracle}(\theta)+\kl(Q^*_{n,\koracle},\Pi_{n,\koracle})\ge \log(2n)}\\
        &\le\Pstar\del{\int \exp(\xi_2\lambda\lossdiff(\theta)-\xi_4\cL_n(\theta))\d \Pi_{n,\koracle}(\theta)\ge 2n}\\
         &\le \frac{1}{2n}\int \Pstar[\exp(\xi_2\lambda\lossdiff(\theta)-\xi_4\cL_n(\theta))]\d \Pi_{n,\koracle}(\theta)\le  \frac{1}{2n},
    \end{align*}
where we use \cref{lemma:variational_formula,lemma:exp_moment_bound} for the first and last inequalities, respectively. Therefore, the following holds
 \begin{align*}
    \int \cL_n(\theta)\d\esavp(\theta) 
        &\lesssim \int\cL_n(\theta) \d Q^*_{n,\koracle}(\theta)+
        \kl(Q^*_{n,\koracle},\Pi_{n,\koracle})+ \log (M_nn)\\
        &\lesssim  \cE_n(\koracle)+\log (M_nn)
    \end{align*}
with probability at least $1-1/n-\P(\fM_n^\complement)$, which completes the proof.
\end{proof}

\section{Additional experiment: LoRA fine-tuning for large language models}

In this experiment, we investigate ESA in the context of fine-tuning a large language model (LLM). We consider two standard language modeling benchmarks: WikiText-2 (WT2) and WikiText-103 (WT103). Both datasets consist of high-quality Wikipedia articles that preserve natural language structure, including punctuation, capitalization, and long-range dependencies. Each dataset is split into training, validation, and test sets following standard conventions and is widely used for evaluating autoregressive language models and parameter-efficient fine-tuning methods.

\paragraph{LoRA fine-tuning.}
As the base model, we use a pretrained GPT-2 language model, whose parameters remain frozen throughout all experiments. Fine-tuning is performed using Low-Rank Adaptation (LoRA; \citepS{hu2022lora}), in which trainable low-rank matrices are injected into selected linear transformations of the pretrained model while keeping the base parameters fixed. Specifically, for a weight matrix $W_0 \in \mathbb{R}^{d_{\text{out}} \times d_{\text{in}}}$ in the pretrained LLM, LoRA introduces a rank-$r$ update
\[
W = W_0 + BA,
\qquad
A \in \mathbb{R}^{r \times d_{\text{in}}},
\quad
B \in \mathbb{R}^{d_{\text{out}} \times r},
\]
where only the LoRA matrices $A$ and $B$ are optimized during fine-tuning.

\paragraph{ESA-LoRA.}
To apply ESA, we consider a ladder of LoRA candidates ordered by increasing expressive capacity, determined by the LoRA rank and the set of target modules.  \cref{tab:lora_configs} summarizes the candidate configurations.
For each LoRA candidate indexed by $\k$, let $\Theta_{n,\k}$ denote the parameter space consisting of all LoRA matrices.
The LoRA parameters $\htheta_{n,\k}$ are trained the standard causal language modeling negative log-likelihood
\begin{align*}
\htheta_{n,\k}
=
\argmin_{\theta\in\Theta_{n,\k}}
\Bigg\{
-
\sum_{x\in \cD_{\text{train}}}
\sum_{t=1}^{T_i}
\log p_{\theta}(x_{t}\mid x_{<t})
\Bigg\},
\end{align*}
where $p_\theta$ denotes the conditional next-token probability induced by the LLM with LoRA parameter $\theta$, $\cD_{\text{train}}$ denotes the training data set, $x=(x_{1},\dots,x_{T_i})$ denotes a tokenized sequence, and $x_{<t}=(x_{1},\dots,x_{t-1})$ denotes the preceding context.

To compare candidates of different capacity, we evaluate each trained candidate on the validation split using a penalized empirical risk
\[
\mPER(\sk)
=
\ell_{n}(\htheta_{n,\k})
+
H_{n,\sk},
\qquad
H_{n,\sk}
=
\frac{\|\htheta_{n,\k}\|_2^2}{P_{\mathrm{ref}}}
\left(\frac{P_{\sk}}{P_{\mathrm{ref}}}\right),
\]
where $\ell_{n}(\htheta_{n,\k})$ is the token-average validation negative log-likelihood of the adapted model,
$\|\htheta_{n,\k}\|_2^2$ is the squared $L_2$ norm of the learned LoRA parameters,
$P_{\sk}$ is the number of trainable LoRA parameters in candidate $\sk$,
$P_{\mathrm{ref}}=10^4$ is a fixed reference scale. 
The ESA procedure sequentially explores the candidate ladder and stops once increasing model capacity no longer improves the criterion $\mPER(\sk)$ as we described in \cref{sec:erm}.

\paragraph{Baselines and results}
For comparison, we consider full aggregation and best model selection as we did in the other experiments. We also consider a fixed LoRA adapter with a larger training budget (LoRA-Large), which uses the same configuration as the $\sk=4$ candidate but is trained for a substantially larger number of optimization steps. This provides a strong LoRA fine-tuning baseline. Lastly, we report the results for no-fine-tuned models. 

For each method, we report its token-level negative log-likelihood (NLL) and next-token accuracy (Acc) on the test set, as well as the training time. Across both WT2 and WT103, the fixed LoRA with larger training budget  achieves the strongest predictive accuracy. However, ESA substantially reduces the total wall-clock training time by stopping before reaching the highest-capacity adapters.

\begin{table}[t]
\centering
\caption{LoRA candidate configurations used in the experiments.
All adapters use LoRA scaling $\alpha=r$ and zero LoRA dropout.
Here, \texttt{q\_proj} and \texttt{v\_proj} denote the linear transformations that generate the query and value representations used in the self-attention computation, while \texttt{out\_proj} denotes the output projection that maps the resulting attention representation back to the model's hidden space.}
\label{tab:lora_configs}
\begin{tabular}{ccc}
\hline
Candidate & Rank $r$ & Target modules \\
\hline
$\sk=1$ & 4  & $\{\texttt{q\_proj}, \texttt{v\_proj}\}$ \\
$\sk=2$ & 8  & $\{\texttt{q\_proj}, \texttt{v\_proj}\}$ \\
$\sk=3$ & 8  & $\{\texttt{q\_proj}, \texttt{v\_proj}, \texttt{out\_proj}\}$ \\
$\sk=4$ & 16 & $\{\texttt{q\_proj}, \texttt{v\_proj}, \texttt{out\_proj}\}$ \\
$\sk=5$ & 32 & $\{\texttt{q\_proj}, \texttt{v\_proj}, \texttt{out\_proj}\}$ \\
\hline
\end{tabular}
\end{table}

\begin{table}[t]
\centering
\caption{Results of LoRA fine-tuning on WikiText-2 (WT2) and WikiText-103 (WT103) using GPT-2. Test negative log-likelihood (NLL), next-token accuracy (Acc), and total training time (minutes) are reported as mean $\pm$ standard deviation over three runs with random seeds 2025, 2026, and 2027. Lower NLL and time indicate better performance, while higher accuracy is preferred.}
\label{tab:lora_results}
\begin{tabular}{llccc}
\hline
Dataset & Method & Test NLL $\downarrow$ & Test Acc $\uparrow$ & Time (min) $\downarrow$ \\
\hline
\multirow{5}{*}{WT2}
& ESA        & 3.2728 $\pm$ 0.0002 & 0.4024 $\pm$ 0.0001 & \textbf{160.8 $\pm$ 68.4} \\
& FA         & 3.2708 $\pm$ 0.0002 & 0.4028 $\pm$ 0.0002 & 202.8 $\pm$ 85.4 \\
& MS         & 3.2701 $\pm$ 0.0001 & 0.4028 $\pm$ 0.0001 & 202.8 $\pm$ 85.4 \\
& LoRA-Large & \textbf{3.2263 $\pm$ 0.0003} & \textbf{0.4087 $\pm$ 0.0002} & 235.3 $\pm$ 74.8 \\
& Base       & 3.7750 $\pm$ 0.0000 & 0.3375 $\pm$ 0.0000 & 0.0 $\pm$ 0.0 \\
\hline
\multirow{5}{*}{WT103}
& ESA        & 3.0474 $\pm$ 0.0052 & 0.4280 $\pm$ 0.0007 & \textbf{374.5 $\pm$ 42.1} \\
& FA         & 3.0454 $\pm$ 0.0058 & 0.4283 $\pm$ 0.0008 & 464.6 $\pm$ 47.6 \\
& MS         & 3.0483 $\pm$ 0.0005 & 0.4279 $\pm$ 0.0001 & 464.6 $\pm$ 47.6 \\
& LoRA-Large & \textbf{2.9767 $\pm$ 0.0002} & \textbf{0.4374 $\pm$ 0.0001} & 467.4 $\pm$ 36.7 \\
& Base       & 3.3800 $\pm$ 0.0000 & 0.3869 $\pm$ 0.0000 & 0.0 $\pm$ 0.0 \\
\hline
\end{tabular}
\end{table}

\section{Further details of numerical experiments}

\subsection{Implementation details of the image classification experiment}

\subsubsection{Candidate neural network models}
\label{appendix:nn_models}

\paragraph{Bayesian CNN ladder for MNIST and Fashion-MNIST.}
For grayscale image datasets, we use a compact family of Bayesian convolutional neural networks whose complexity increases monotonically with the width of the hidden feature maps. Each model consists of one or two convolutional blocks followed by a Bayesian linear classifier. Specifically, the architecture takes the form
\[
x \;\mapsto\; \mathrm{ConvBlock}(c_1) \;\mapsto\; \mathrm{ConvBlock}(c_2)\;(\text{optional}) \;\mapsto\; \mathrm{Flatten} \;\mapsto\; \mathrm{BayesianLinear}(K),
\]
where \(K\) denotes the number of classes. Each convolutional block is defined as
\[
\mathrm{BayesianConv2d}(3\times 3)\;\to\;\mathrm{BatchNorm}\;\to\;\mathrm{ReLU}.
\]
All convolutional and linear layers are Bayesian and therefore contribute layer-wise KL terms to the variational objective. We consider a model ladder indexed by the channel widths of the convolutional blocks. The smallest models contain a single convolutional block, while the larger models contain two blocks:
\begin{itemize}
    \item \texttt{Small1:} one convolutional block with width \(c_1=1\).
    \item \texttt{Small2:} one convolutional block with width \(c_1=2\).
    \item \texttt{Medium1:} two convolutional blocks with widths \((c_1,c_2)=(2,4)\).
    \item \texttt{Medium2:} two convolutional blocks with widths \((c_1,c_2)=(4,8)\).
    \item \texttt{Medium3:} two convolutional blocks with widths \((c_1,c_2)=(8,16)\).
\end{itemize}

\paragraph{Bayesian ResNet ladder for CIFAR and Tiny-ImageNet.}
For color image datasets, we use Bayesian ResNet architectures built from standard residual basic blocks. Each block contains two Bayesian \(3\times 3\) convolutions, with an optional batch-normalization layer after each convolution and a ReLU activation after the first convolution and after the residual addition. Writing \(F(x)\) for the residual branch and \(S(x)\) for the shortcut connection, the block output is
\[
\mathrm{Block}(x)=\mathrm{ReLU}\bigl(F(x)+S(x)\bigr),
\]
where \(S(x)\) is the identity when the input and output dimensions agree, and otherwise is implemented by a Bayesian \(1\times 1\) convolution to match spatial resolution and channel dimension. The full network begins with an initial Bayesian \(3\times 3\) convolution, followed by three residual stages, global average pooling, and a Bayesian linear classifier. The widths of three residual stages are fixed at \([16,32,64]\), while model complexity is increased by increasing the number of residual blocks per stage. Specifically, we consider the following depth ladder, following the CIFAR experimental setting in Section 4.2 of \citetS{he2016deep}:
\begin{itemize}
    \item \texttt{ResNet20:} block configuration \([3,3,3]\).
    \item \texttt{ResNet32:} block configuration \([5,5,5]\).
    \item \texttt{ResNet44:} block configuration \([7,7,7]\).
    \item \texttt{ResNet56:} block configuration \([9,9,9]\).
    \item \texttt{ResNet110:} block configuration \([18,18,18]\).
\end{itemize}
All color-dataset models use batch normalization, and the second and third residual stages downsample the feature maps by using stride \(2\) in the first block of each stage.
\subsubsection{Tuning of inverse temperature parameter}
\label{appendix:inv_temp}

We calibrate the inverse temperature parameter $\lambda$ using a short probe run on the first two models in the ladder. For a candidate value of $\lambda$, let $\Vfe^{\sk}_t(\lambda)$ denote the VFE at the $t$-th training epoch of the $\sk$-th neural network in the model ladder. To summarize the recent training dynamics, we compute the average slope of the VFE over the last $m$ epochs:
    \begin{align*}
        \text{slope}_{\sk}(\lambda):=
\frac{1}{m}\sum_{t=1}^{m}\cbr{\Vfe^{\sk}_{T-t+1}(\lambda)-\Vfe^{\sk}_{T-t}(\lambda)},
    \end{align*}
where $T$ denotes the total number of the training epochs.
We select $\lambda_*$ so that the VFE slopes of the first two models in the ladder are matched:
    \begin{align*}
        \lambda_*=\argmin_{\lambda}\abs{ \text{slope}_1(\lambda) -  \text{slope}_2(\lambda) }.
    \end{align*}
This choice aligns the local training dynamics of the two smallest models, providing a data-dependent calibration of the inverse temperature parameter. For color image datasets using the ResNet ladder, we further apply a simple architecture-based scaling,
    \begin{align*}
        \lambda_{**}
        :=\lambda_*
        \left(\frac{P_0}{P}\right)^{1/2}\left(\frac{C_0}{C}\right)^{1/2},
        \qquad
        P_0 = 10^4,\; C_0 = 10,
    \end{align*}
where $P$ is the number of parameters of the baseline model and $C$ is the number of classes. This adjustment compensates for changes in model size and task complexity when transferring the calibrated temperature across architectures.

\subsection{Implementation details of the empirical Bayes experiments}

For the experiment, we follow the implementation details in Section~5.2 of \citetS{ray2021variational}. For \texttt{sparsevb}, we use a Laplace slab prior with hyperparameters $(\alpha_0,b_0,\lambda)=(1,p,1)$, with \texttt{maxiter}$=10^4$ and \texttt{tol}$=10^{-5}$. For \texttt{varbvs}, we set \texttt{tol}$=10^{-4}$ and \texttt{maxiter}$=10^4$. For \texttt{SSLASSO}, we set $\lambda_1=0.01$ and take $\lambda_0$ to be an arithmetic sequence of 200 values from $\lambda_1$ to $p$; we use \texttt{variance="unknown"}, $a=1$, $b=p$, and \texttt{penalty="adaptive"}, and report the stabilized $\lambda_0$ solution recommended by the authors. LASSO and SCAD are tuned by cross-validation using \texttt{cv.glmnet} and \texttt{cv.ncvreg}, respectively, and we report the corresponding \texttt{lambda.min} fits.

\bibliographystyleS{plainnat}
\bibliographyS{_references}

\end{appendices}

\end{document}

%% file: _preamble.tex
\usepackage{amsfonts,amsmath,amsthm,mathtools,bbm,bm,eucal}
\usepackage{times,amssymb,mathabx} 
\usepackage[notrig]{physics}
\usepackage{graphicx,caption,subcaption,hyperref,enumitem,multirow} 
\usepackage[ruled,vlined,linesnumbered]{algorithm2e}
\usepackage[nameinlink,capitalize,noabbrev]{cleveref}
\usepackage{crossreftools,multibib} 
\usepackage[authoryear]{natbib}
\usepackage[title,toc]{appendix}
\usepackage[dvipsnames,x11names]{xcolor}
\usepackage{booktabs,makecell} 
\usepackage{tikz}

\newcites{S}{References}
\hypersetup{colorlinks=True, urlcolor=Blue, citecolor=MidnightBlue, linkcolor=RedViolet}

\numberwithin{equation}{section}

\newtheorem{theorem}{Theorem}[section]
\newtheorem{lemma}{Lemma}[section]
\newtheorem{proposition}{Proposition}
\newtheorem{corollary}[theorem]{Corollary}
\theoremstyle{definition}
\newtheorem{definition}{Definition}
\newtheorem{assumption}{Assumption}

\newtheorem{examplex}{Example}
\newenvironment{example}
  {\pushQED{\qed}\examplex}
  {\popQED\endexamplex}
\newtheorem{remarkx}{Remark}
\newenvironment{remark}
  {\pushQED{\qed}\remarkx}
  {\popQED\endremarkx}

\crefname{assumption}{Assumption}{Assumptions}
\crefname{examplex}{Example}{Examples}
\crefname{theorem}{Theorem}{Theorems}
\crefname{lemma}{Lemma}{Lemmas}
\crefname{section}{Section}{Sections}
\crefname{algorithm}{Algorithm}{Algorithms}
\crefname{enumi}{Assumption}{Assumptions}
\crefname{equation}{Inequality}{Inequalities}
\Crefname{equation}{Equation}{Equations}

\makeatletter
\newcommand*{\rom}[1]{\expandafter\@slowromancap\romannumeral #1@}
\makeatother

\newcommand{\genericdel}[4]{%
  \ifcase#3\relax
  \ifx#1.\else#1\fi#4\ifx#2.\else#2\fi\or
  \bigl#1#4\bigr#2\or
  \Bigl#1#4\Bigr#2\or
  \biggl#1#4\biggr#2\or
  \Biggl#1#4\Biggr#2\else
  \left#1#4\right#2\fi
}
\newcommand{\del}[2][-1]{\genericdel(){#1}{#2}}
\newcommand{\cbr}[2][-1]{\genericdel\{\}{#1}{#2}}
\newcommand{\sbr}[2][-1]{\genericdel[]{#1}{#2}}

 \let\abs\Abs

\def\d{\textup{d}}
\def\e{\textup{e}}
\def\k{\textsf{\textup{k}}}

\def\P{\textsf{\textup{P}}}
\def\R{\mathbb{R}}

\DeclareMathOperator*{\argmin}{\arg\!\min}

\def\iidsim{\stackrel{\textup{iid}}{\sim}}
\def\indsim{\stackrel{\textup{ind}}{\sim}}

\def\ind{\mathbbm{1}}
\def\kl{\textup{KL}}

\def\diag{\textup{diag}}

\def\Pstar{\textsf{\textup{P}}_\star^{(n)}}
\def\Ptheta{\textsf{\textup{P}}_\theta^{(n)}}
\def\lossdiff{\ell_n^\triangle}
\def\ceta{\check{\eta}}
\def\ueta{\underline{\eta}}
\def\teta{\tilde{\eta}}
\def\ccE{\check{\mathcal{E}}}
\def\chQ{\check{Q}}
\def\esa{\texttt{\textup{ESA}}}

\def\hsm{\widehat{\textsf{\textup{m}}}_n}
\def\csm{\check{\textsf{\textup{m}}}_n}
\def\usm{\underline{\textsf{\textup{m}}}_n}
\def\tsm{\tilde{\textsf{\textup{m}}}_n}

\def\kopt{\sk^\dag_n}
\def\kover{\sk^\circ_n}
\def\ckopt{\check{\sk}^\dag_n}
\def\ukopt{\underline{\sk}^\dag_n}
\def\tkopt{\tilde{\sk}^\dag_n}
\def\koracle{\sk^*_n}

\def\vfe{{\textup{VFE}_{n,\sk}}}
\def\Vfe{{\textup{VFE}}}
\def\mvfe{{\textup{mVFE}_n}}

\def\ebmvfe{{\textup{mVFE-EB}_n}}
\def\esavp{\widehat{Q}^{\esa}_n}
\def\esapst{\widehat{\Pi}^{\esa}_n}
\def\esaebpst{\check{\Pi}^{\esa}_n}
\def\esaebvp{\check{Q}^{\esa}_n}
\def\omegaesa{\widehat{\omega}_{n,\sk}^{\esa}}
\def\omegaebesa{\check{\omega}_{n,\sk}^{\esa}}
\def\omegafesa{\underline{\omega}_{n,\sk}^{\esa}}
\def\omegass{\tilde{\omega}_{n,\sk}^{\esa}}

\def\mPER{{\textup{mPER}_n}}
\def\esaerm{\htheta^{\esa}_n}
\def\esavbmean{\bar{\theta}^{\esa}_n}
\def\esass{\ttheta^{\esa}_{n}}
\def\psib{\psi^\bullet}
\def\Psib{\Psi^\bullet_{n,\sk}}
\def\renyihp{\cD_{1+\rho}^{\Pi_{n,\sk}}}
\def\thetab{\theta^\bullet}
\def\hthetab{\htheta^\bullet_{n,\sk-1,j}}
\def\Thetab{\Theta^\bullet_{n,\sk-1,j}}
\def\Thetabb{\Theta^\bullet_{n,\sk,j}}

\def\Ber{\texttt{\textup{Bernoulli}}}

\def\Unif{\texttt{\textup{Unif}}}

\def\N{\texttt{\textup{N}}}

\makeatletter
\def\greekvectors#1{%
 \@for\next:=#1\do{%
    \def\X##1;{\expandafter\def\csname b##1\endcsname{\bm{\csname##1\endcsname}}}
    \expandafter\X\next;}
 \@for\next:=#1\do{%
    \def\X##1;{\expandafter\def\csname h##1\endcsname{\widehat{\csname##1\endcsname}}}
    \expandafter\X\next;}
 \@for\next:=#1\do{%
    \def\X##1;{\expandafter\def\csname t##1\endcsname{\widetilde{\csname##1\endcsname}}}
    \expandafter\X\next;}
 \@for\next:=#1\do{%
    \def\X##1;{\expandafter\def\csname ba##1\endcsname{\bar{\csname##1\endcsname}}}
    \expandafter\X\next;}
 \@for\next:=#1\do{%
    \def\X##1;{\expandafter\def\csname c##1\endcsname{\check{\csname##1\endcsname}}}
    \expandafter\X\next;}
 \@for\next:=#1\do{%
    \def\X##1;{\expandafter\def\csname u##1\endcsname{\underline{\csname##1\endcsname}}}
    \expandafter\X\next;}
 \@for\next:=#1\do{%
    \def\X##1;{\expandafter\def\csname hb##1\endcsname{\widehat{\bm{\csname##1\endcsname}}}}
    \expandafter\X\next;}
}
\greekvectors{alpha,beta,gamma,delta,epsilon,zeta,theta,kappa,lambda,mu,nu,xi,pi,rho,tau,sigma,phi,chi,psi,omega,varepsilon,varpi,varphi,vartheta,  Delta,Gamma,Theta,Lambda,Xi,Pi,Sigma,Phi,Psi,Omega}
    
\@tfor\next:=abfghijlmnopqrstuwxyzABCDGHIJKLMOQSTUVWXYZ\do{%
    \def\command@factory#1{\expandafter\def\csname #1\endcsname{\mathbf{#1}} }
    \expandafter\command@factory\next
}
\@tfor\next:=abcdeghijklnopqrstuvwxyzABCDEFGHIJKLMNOQRSTUVWXYZ\do{%
    \def\command@factory#1{\expandafter\def\csname b#1\endcsname{\mathbbm{#1}} }
    \expandafter\command@factory\next
}
\@tfor\next:=abcdeghijklnpqrstuvwxyzABCDEFGHIJKLMNOPQRSTUVWXYZ\do{%
    \def\command@factory#1{\expandafter\def\csname t#1\endcsname{\texttt{\textup{#1}}} }
    \expandafter\command@factory\next
}
\@tfor\next:=dknoABCDEFGHIJKLMNOPQRSTUVWXYZ\do{%
    \def\command@factory#1{\expandafter\def\csname c#1\endcsname{\mathcal{#1}} }
    \expandafter\command@factory\next
}
\@tfor\next:=abdefghijklmnopqrstuvwxyzABCDEFGHIJKLMNOPQRSTUVWXYZ\do{%
    \def\command@factory#1{\expandafter\def\csname s#1\endcsname{\textsf{\textup{#1}}} }
    \expandafter\command@factory\next
}
\@tfor\next:=abcdefghjklmnopqrstuvwxyzABCDEFGHIJKLMNOPQRSTUVWXYZ\do{
    \def\command@factory#1{\expandafter\def\csname f#1\endcsname{\mathfrak{#1}} }
    \expandafter\command@factory\next
}
\@tfor\next:=abcdeghijklmnopqstuvwxyzABCDEFGHIJKLMNOPQRSTUVWXYZ\do{%
    \def\command@factory#1{\expandafter\def\csname ba#1\endcsname{\bar{#1}} }
    \expandafter\command@factory\next
}
\@tfor\next:=fxyzABCDEFGHIJKLMNOPQRSTUVWXYZ\do{%
    \def\command@factory#1{\expandafter\def\csname h#1\endcsname{\widehat{#1}} }
    \expandafter\command@factory\next
}
\@tfor\next:=abcdefghijklmnopqrstuvwxyzABCDEFGHIJKLMNOPQRSTUVWXYZ\do{%
    \def\command@factory#1{\expandafter\def\csname ti#1\endcsname{\widetilde{#1}} }
    \expandafter\command@factory\next
}
\@tfor\next:=abcdefghjklmnopqrstuvwxyzABCDEFGHIJKLMNOPQRSTUVWXYZ\do{
    \def\command@factory#1{\expandafter\def\csname u#1\endcsname{\underline{#1}} }
    \expandafter\command@factory\next
}
\@tfor\next:=ABCDEFGHIJKLMNOPQRSTUVWXYZ\do{%
    \def\command@factory#1{\expandafter\def\csname hc#1\endcsname{\widehat{\mathcal{#1}}} }
    \expandafter\command@factory\next
}
\@tfor\next:=ABCDEFGHIJKLMNOPQRSTUVWXYZ\do{%
    \def\command@factory#1{\expandafter\def\csname uc#1\endcsname{\underline{\mathcal{#1}}} }
    \expandafter\command@factory\next
}